\documentclass[12pt]{amsart}
\usepackage[letterpaper, margin=1.25in]{geometry}
\usepackage{amsmath,amsthm,amscd,color}
\usepackage{amssymb}
\usepackage{amsfonts}
\usepackage{latexsym}
\usepackage{mathrsfs}
\usepackage{hyperref}
\usepackage{geometry}
\usepackage{fancyvrb}
\usepackage{dsfont}
\usepackage[dvipsnames]{xcolor}
\usepackage{tikz-cd}
\usepackage{rotating}
\usepackage{graphicx,diagbox}
\usepackage{caption}
\usepackage{subcaption}
\usepackage{MnSymbol}
\usepackage{stmaryrd}

\newtheorem{theorem}[equation]{Theorem}
\newtheorem{lemma}[equation]{Lemma}
\newtheorem{proposition}[equation]{Proposition}
\newtheorem{corollary}[equation]{Corollary}

\theoremstyle{remark}
\newtheorem{definition}[equation]{Definition}

\newtheorem*{remark}{\bf Remark}

\numberwithin{equation}{section}
\numberwithin{table}{section}

\newcommand{\wh}{\widehat}

\DeclareMathOperator{\CKh}{CKh}
\DeclareMathOperator{\Kh}{Kh}

\DeclareMathOperator{\Br}{Br}
\DeclareMathOperator{\APS}{APS}
\DeclareMathOperator{\DEC}{DEC}
\DeclareMathOperator{\ACT}{ACT}
\DeclareMathOperator{\CIR}{CIR}
\DeclareMathOperator{\INT}{INT}
\DeclareMathOperator{\FREE}{FREE}
\DeclareMathOperator{\MATCH}{MATCH}
\DeclareMathOperator{\BRIDGE}{BRIDGE}
\DeclareMathOperator{\cl}{cl}
\DeclareMathOperator{\CR}{CR}
\DeclareMathOperator{\RES}{RES}
\DeclareMathOperator{\STATE}{STATE}

\newcommand{\Z}{{\mathbb{Z}}}
\newcommand{\R}{{\mathbb{R}}}

\newcommand{\F}{{\mathbb{F}}}
\newcommand{\U}{{\mathbb{H}}}

\newcommand{\I}{{\mathbb{I}}}
\newcommand{\BB}{{\mathcal{B}}}

\newcommand{\TT}{{\mathcal{T}}}

\newcommand{\II}{{\mathcal{I}}}
\newcommand{\CC}{{\mathcal{C}}}
\newcommand{\LL}{{\mathcal{L}}}
\newcommand{\QQ}{{\mathcal{Q}}}

\newcommand{\Hom}{\operatorname{Hom}}

\title{Khovanov Homology for Tangles in Connected Sums}
\author{Alan Du}

\begin{document}

\maketitle

\begin{abstract}
Khovanov homology is an invariant for links in the three sphere that categorizes the Jones polynomial. We extend Khovanov's construction to links in 3-manifolds that are connected sums of orientable interval bundles over surfaces. Cutting the 3-manifold along a separating sphere, we construct type D and type A structures that are invariants of tangles in the two halves following the work of Roberts. Gluing the type D and type A structures along the common boundary recovers the Khovanov homology of the link.
\end{abstract}

\section{Introduction}

Khovanov homology is an invariant for links in $S^3$ in the form of a bigraded chain complex. The polynomial Euler characteristic of Khovanov homology is the Jones polynomial \cite{khovanov-categorification}. Asaeda, Przytycki, and Sikora defined homology invariants of links in all orientable interval-bundles over surfaces that categorify the Kauffman bracket skein module \cite{aps-i-bundles}. Their homology groups were defined with $\Z$-coefficients for bundles over surfaces $F$ for $F\ne\R P^2$, and with $\Z/2\Z$-coefficients for bundles over $\R P^2$. In particular, their construction works for links in $\R P^3$ by viewing $\R P^3$ with a point removed as the twisted $I$-bundle over $\R P^2$. Later, Gabrov\v sek extended the definition for links in $\R P^3$ with $\Z$-coefficients \cite{gabrovsek-rp3}. Khovanov homology was also defined for links in $S^2\times S^1$ by Rozansky and for links in $\#^r(S^2\times S^1)$ by Willis \cite{rozansky-categorification} \cite{willis-kh}.

For tangles $\TT$ in $D^3$, Lawrence Roberts defined bigraded type D and type A structures $\llbracket \TT\rrangle$ and $\llangle \TT\rrbracket$ that are invariants of the tangles. Furthermore, given two tangles $\overleftarrow{\TT_1}$ and $\overrightarrow{\TT_2}$ with the same number of endpoints on $\partial D^3$, one can glue the two tangles along their common boundary to obtain a link $\LL=\overleftarrow{\TT_1}\natural\overrightarrow{\TT_2}$ in $S^3$. Roberts showed that the box tensor product of $\llangle\overleftarrow{\TT_1}\rrbracket$ and $\llbracket\overrightarrow{\TT_2}\rrangle$ recovers the Khovanov homology of $\LL$ \cite{roberts-type-d}\cite{roberts-type-a}. 

In this paper, we extend Roberts' construction to tangles in $M_1\#\dots\#M_p\setminus\mathring D^3$ for any interval bundles over surfaces $M_1,\dots,M_p$ for any integer $p\ge 0$. That is, given a tangle $\TT_1$ in $M_1\#\dots\#M_p\setminus \mathring D^3$ with $2n$ endpoints on $\partial D^3=S^2$, we construct a bigraded type A structure $\llangle \TT_1\rrbracket$. For a tangle $\TT_2$ in $M_{p+1}\#\dots\#M_r\setminus \mathring D^3$, another connected sum of interval bundles over surfaces with a ball removed, with $2n$ endpoints on $\partial D^3$, we construct a bigraded type D structure $\llbracket \TT_2\rrangle$. These type D and type A structures are over bigraded algebras $\BB\Gamma_n(M_1\#\dots\#M_r,p)$ equipped with $(1,0)$ differentials $d_{\BB\Gamma_n(M_1\#\dots \#M_r,p)}$. We prove that the type A structure $\llangle \TT_1\rrbracket$ and the type D structure $\llbracket \TT_2\rrangle$ are invariants of the tangles $\TT_1$ and $\TT_2$ under isotopy.

Furthermore, given $\TT_1$ and $\TT_2$ as in the preceding paragraph, one can form their connected sum $\LL=\TT_1\natural\TT_2$, which is a link in $M=M_1\#\dots\#M_r$. Note that every link in $M$ can be written as the connected sum of two tangles in this way.

\begin{theorem}
The chain homotopy type of the $A_\infty$ pairing $\llangle \LL\rrangle=\llangle \TT_1\rrbracket\boxtimes\llbracket \TT_2\rrangle$ equipped with the box tensor product differential $\partial^\boxtimes$ is an invariant of the link $\LL=\TT_1\natural \TT_2$ under isotopy.
\end{theorem}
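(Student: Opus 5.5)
The plan is to reduce isotopy invariance of the pairing to two ingredients. The first is the invariance of the pieces, $\llangle \TT_1\rrbracket$ and $\llbracket \TT_2\rrangle$, which the paper establishes up to homotopy equivalence of type A and type D structures. The second is the way isotopies of $\LL$ in $M$ decompose relative to the separating sphere $S=\partial D^3$.

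\textbf{Step 1: normalize the isotopy.} Given an isotopy of $\LL$ in $M=M_1\#\dots\#M_r$, I would first perturb it so that $\LL$ is transverse to $S$ at all but finitely many times. At those times a single tangency occurs, which creates or cancels a pair of intersection points. Away from these times the isotopy restricts to isotopies of $\TT_1$ and $\TT_2$ fixing the number of endpoints, and an isotopy of the endpoints along $S$ can be absorbed into either side. On diagrams this means every isotopy of $\LL$ factors into:
\begin{enumerate}
\item Reidemeister-type moves and moves specific to interval bundles (those of Asaeda--Przytycki--Sikora) supported in one half;
\item braid-like moves of endpoints across $S$;
\item the birth or death of a pair of endpoints, i.e.\ a local arc pushed across $S$.
\end{enumerate}

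\textbf{Step 2: moves in one half.} For moves of type (1), invariance of $\llangle \TT_1\rrbracket$ or $\llbracket \TT_2\rrangle$ up to homotopy equivalence gives the result, since $\boxtimes$ is functorial. A homotopy equivalence of type A structures, or of type D structures, induces a chain homotopy equivalence of box tensor products; this is standard, as in Lipshitz--Ozsv\'ath--Thurston and Roberts. Boundedness should hold here because the type D structure comes from a finite cube of resolutions.

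\textbf{Step 3: moves across the boundary.} For endpoint isotopies along $S$, I would realize them as crossings pushed from one side to the other: a crossing adjacent to $S$ can be viewed either as part of $\TT_1$ or as part of $\TT_2$. I would then show that the cube-of-resolutions description is compatible with this. Concretely, $\llangle \TT_1\cup\sigma\rrbracket\boxtimes\llbracket\TT_2\rrangle\simeq\llangle\TT_1\rrbracket\boxtimes\llbracket\sigma\cup\TT_2\rrangle$, because both sides are mapping cones of the two resolutions of $\sigma$ that are glued in. This reduces the problem to checking that the gluing of resolved tangles matches the link-level complex. That matching is the pairing theorem the paper proves for gluing: $\llangle \TT_1\rrbracket\boxtimes\llbracket \TT_2\rrangle$ recovers the APS-type complex of $\LL$. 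Once the pairing theorem is available, the cleanest route is to use it directly. The pairing is homotopy equivalent to the generalized Khovanov complex of $\LL$ in $M$, which is a link invariant by the extension of Asaeda--Przytycki--Sikora to connected sums.

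\textbf{Main obstacle.} The hardest step is (3), changing the number $2n$ of endpoints, because the algebras $\BB\Gamma_n$ differ. Here I would try the pairing-theorem route first, identifying $\llangle\LL\rrangle$ with a complex defined intrinsically from a diagram of $\LL$ in $M$, which makes $n$ irrelevant. As a fallback, I would compare the two cube complexes directly. This requires checking that the differentials coming from the $(1,0)$ algebra differential $d_{\BB\Gamma_n}$ match the saddle maps for circles that meet $S$ in an essential way in $M$ (non-trivial in $\pi_1$ of the connected sum). These circles are where the surface-dependent structure of the APS-style complex enters.
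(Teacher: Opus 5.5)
Your Steps 1 and 2 match the first half of the paper's argument: moves supported in one half are absorbed by one-sided invariance of $\llangle T_1\rrbracket$ and $\llbracket T_2\rrangle$ together with functoriality of $\boxtimes$. Step 3, however, rests on a result you cannot cite. APS treat a single $I$-bundle, and there is no extension to connected sums that makes the link-level complex an invariant. Moreover, the complex $(\CKh(L),d)$ of this paper is deliberately \emph{not} the APS complex. It uses the ordinary merge/split maps on every circle, trivial or not, and the zero map on 1-1 bifurcations. The Remark after the handleslide proof explains why: with APS conventions the complex fails handleslide invariance in a connected sum. In the paper, link invariance of $\CKh(L)$ is deduced \emph{from} the statement you are proving, so invoking it here is circular. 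The same misconception drives your fallback, which tries to match $d_{\BB\Gamma_n}$ against surface-dependent saddle maps on essential circles; no such maps occur in this complex.

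What you actually need has two parts. First, use the isomorphism $\CKh(T_1\natural T_2)\cong\llangle T_1\rrbracket\boxtimes\llbracket T_2\rrangle$, which holds for every choice of cutting sphere. Second, prove directly that $(\CKh(L),d)$ is unchanged up to homotopy by moves across $\partial D_p$.

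For finger and mirror moves across $\partial D_p$ the second part is immediate. States, circles and active arcs correspond bijectively, and the merge/split/1-1 type of each surgery is preserved, so the complexes are isomorphic. In particular, your ``hardest step'' (changing $n$) costs nothing once you pass to $\CKh$.

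The move that does require work is the handleslide across $\partial D_p$, which your Step 1 list never names. In the 3D picture it is an isotopy within one half (an arc slid over a cap of the removed ball), so you silently file it under Step 2. But the paper's one-sided invariance proofs only cover moves across $\partial D_i$ with $i\neq p$, and the paper handles this handleslide at the level of $\CKh(L)$ instead. It either reruns the virtual-diagram/``prime Reidemeister'' argument, or re-cuts along a different sphere $\partial D_{p'}$. In the re-cut, $\partial D_p$ is interior to one half, so one-sided invariance applies, and the isomorphism with $\CKh(L)$ for both cuts transports the result back. With that step and the trivial finger/mirror isomorphisms in place of the APS appeal, your outline becomes the paper's proof.
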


Furthermore, it is possible to directly construct Khovanov homology in $M$: given a link, there exists a bigraded chain complex $(\CKh(\LL),d)$, let $\Kh(\LL)$ denote the homology of this chain complex.

\begin{theorem}
Let $\LL=\TT_1\# \TT_2$ as before, then $(\CKh(\LL),d)$ is isomorphic to $(\llangle \LL \rrangle,\partial^\boxtimes)$. As a consequence, its chain homotopy type and its homology $\Kh(\LL)$ are invariants of the link $\LL$.
\end{theorem}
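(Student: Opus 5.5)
Theorem 2 is an identification of complexes. The plan is to match generators first, then match differentials term by term; invariance then follows formally from Theorem 1.

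\textbf{Step 1: generators.} Fix a diagram for $\LL=\TT_1\natural\TT_2$ in which the separating sphere meets the diagram in $2n$ points, so the crossings split into those of $\TT_1$ and those of $\TT_2$.
\begin{itemize}
\item As a vector space, the box tensor product $\llangle \TT_1\rrbracket\boxtimes\llbracket\TT_2\rrangle$ is spanned by pairs $x\otimes y$. Here $x$ is a generator of $\llangle\TT_1\rrbracket$ and $y$ one of $\llbracket \TT_2\rrangle$, and the idempotents (crossingless matchings, or their analogues in $\BB\Gamma_n(M,p)$) must agree: $I_x=I_y$.
\item A generator of $\llangle \TT_1\rrbracket$ is a resolution of the crossings of $\TT_1$ together with decorations on its closed and arc components. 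Likewise for $\llbracket\TT_2\rrangle$.
\item A matched pair glues to a full resolution of $\LL$ in $M$. Its components are the closed components on either side, plus the circles or non-trivial curves formed by joining arcs across the sphere.
\end{itemize}
I would define a map $\Phi\colon \CKh(\LL)\to\llangle\LL\rrangle$ sending an enhanced state of $\LL$ to the corresponding pair. Component labels transfer directly on closed components. On glued components, the label is recorded in the way Roberts does: as the decoration on the arcs of the type A side, with the type D side carrying the idempotent.

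Next I would check that $\Phi$ is a bijection on generators and preserves both gradings. This uses the normalizations of $\BB\Gamma_n(M,p)$ and the grading shifts in the definitions.

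\textbf{Step 2: differentials.} The box tensor differential is
\[
\partial^\boxtimes=\sum_k (m_{k+1}\otimes \mathrm{id})(\mathrm{id}\otimes \delta^k).
\]
I would sort the edges of the cube of resolutions of $\LL$ into three kinds:
\begin{itemize}
\item \emph{Crossings of $\TT_1$.} These are recorded by $m_1$ on the type A side, with $\delta^0$ trivial.
\item \emph{Crossings of $\TT_2$ not touching the boundary arcs.} These come from the $\delta^1$ terms whose algebra coefficient is an idempotent.
\item \emph{Crossings of $\TT_2$ whose surgery changes how the arcs meet the boundary.} Here $\delta^1$ outputs a nontrivial algebra element $a$, and $m_2(x,a)$ on the type A side realizes the merge, split, or APS-type map on the glued components.
\end{itemize}

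\textbf{Main obstacle.} The key verification is that these pieces reproduce exactly the APS-type differential in $M$. This includes the surface-specific maps that appear when a surgery produces or destroys non-contractible curves, or when the differential vanishes for orientation reasons on non-orientable surfaces. The higher terms $m_{k+1}$ with $k\ge2$, and the algebra differential $d_{\BB\Gamma_n}$, must also contribute nothing extra.

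I would attack this by showing the following:
\begin{itemize}
\item The type D structure is built so that $\delta$ only has $\delta^1$ nonzero on a single cube edge. Iterated terms therefore correspond to compositions that already cancel or vanish in the algebra, and I would verify this by a case check on the local pictures near the sphere.
\item The multiplication in $\BB\Gamma_n(M,p)$ is defined by gluing $\overline{I_x}\cup a$ and applying the same APS surgery rules. Hence $m_2(x,a)$ equals the $\CKh$ edge map by construction.
\end{itemize}
With the differentials matched, $\Phi$ is an isomorphism of chain complexes.

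\textbf{Step 3: invariance.} Theorem 1 says the chain homotopy type of $(\llangle\LL\rrangle,\partial^\boxtimes)$ is an invariant of $\LL$. Transporting this through the isomorphism $\Phi$ shows that the chain homotopy type of $(\CKh(\LL),d)$ is an invariant of $\LL$, and hence so is its homology $\Kh(\LL)$.
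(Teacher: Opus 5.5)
\textbf{Gap in Step 2.} Your outline matches the paper's route: a bijection on generators, a term-by-term match of $\partial^\boxtimes$ with $d$, then transport of Theorem 1 through the isomorphism. The paper simply defers to Roberts' Proposition 7.3 for the isomorphism, and your Step 3 is exactly how the paper concludes. The problem is that your sorting of cube edges in Step 2 is wrong for this construction, so the match fails as organized.

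Here $m_1=\overleftarrow{d}_{\APS}$ contains only the active arcs $\gamma$ of $T_1$ with $e(\gamma)=I$. These are arcs with both feet on free circles, arcs absorbing a $+$ free circle into a cleaved one, and arcs splitting off a free circle without changing the cleaved sign. The arcs of $T_1$ that merge or split cleaved circles, or flip the sign of a cleaved circle, are not in $m_1$. They arise from $(m_2\otimes\mathrm{id})(\mathrm{id}\otimes\overrightarrow{\delta}_{T_2})$ through two kinds of terms:
\begin{itemize}
\item $\overleftarrow{e}_{(\eta,\sigma,\sigma')}\otimes(r_\eta,s_\eta)$, one for every left bridge $\eta\in\overleftarrow{\Br}(L)$ of the matching inside $y$; these are attached to no crossing of $T_2$.
\item $\overleftarrow{e}_C\otimes(r,s_C)$, one for every $+$ cleaved circle $C$.
\end{itemize}
You then need to check three things:
\begin{itemize}
\item $m_2(x\otimes\overleftarrow{e}_{(\eta,\sigma,\sigma')})=\sum_{\gamma\in\overleftarrow{\ACT}(r,\eta)}(r_\gamma,s_\gamma)$ recovers exactly the active arcs of $T_1$ in the class $\eta$, and bridges realized by no active arc contribute $0$.
\item $m_2(x\otimes\overleftarrow{e}_C)$ recovers exactly $\overleftarrow{\DEC}(x,C)$.
\item Right arcs of $T_2$ in $\INT$, $\DEC$ and $R(r)$ are realized by $m_2$ against $I$, against $\overrightarrow{e}_C$ (giving $(r,s_C)$), and against $\overrightarrow{e}_\eta$ (surgery on the right matching inside $x$), respectively.
\end{itemize}
Your plan has no slot for the left-algebra outputs of $\delta$. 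Those terms of $\partial^\boxtimes$ therefore go unmatched, while the $T_1$ edges they produce are wrongly credited to $m_1$. Your second and third classes are also misdrawn: right $\INT$ arcs can touch cleaved circles, and right $\DEC$ arcs output $\overrightarrow{e}_C$ without changing the cleaved link.

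\textbf{Wrong model of the algebra.} Your claim that $m_2(x,a)$ equals the edge map ``by construction'' does not hold. $\BB\Gamma_n(M,p)$ is the path algebra of $\Gamma_n$ modulo relations. Its idempotents are decorated cleaved links $(L,\sigma)$, not matchings, and $m_2$ is given by explicit rules on edge generators. The match holds for a different reason: $\overrightarrow{\delta}_{T_2}$ emits only idempotents and single edges, and the decoration rules on edges of $\Gamma_n$ copy the Khovanov merge/split rules.

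There are also no trivial/nontrivial-circle or orientation-dependent maps to reproduce. $d$ uses the plain merge/split rules and vanishes only on 1-1 bifurcations. This matches because $\Gamma_n$ has no edges for 1-1 bifurcations and $d_{\APS}$ sends them to $0$. There are no higher terms to cancel either, since $m_k=0$ for $k\ge 3$.

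On generators, $x\otimes y\ne 0$ if and only if $\partial x=\partial y$. So the signs of glued circles live in the shared idempotent, not on one side. The two $\tfrac12\iota$ shifts add up to the cleaved-circle part of the quantum grading.
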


As a special case of our construction, we obtain a definition of Khovanov homology for links in $\#^r\R P^3$, where $\R P^3$ with a point removed is viewed as the twisted interval bundle over $\R P^2$.

In sections 2 and 3, we define diagrams for links in $M$ and tangles in $M_1\#\dots\#M_p\setminus\mathring D^3$ and establish notation. In section 4 we define the cleaved links algebra $\BB\Gamma_n(M,p)$. In section 5 we define the underlying bigraded modules $\llbracket \overrightarrow{T}\rrangle$ and $\llangle \overleftarrow{T}\rrbracket$ with differentials $\overrightarrow{d}_{\APS}$ and $\overleftarrow{d}_{\APS}$. In sections 6 and 7, we define the type D and type A structures, respectively, and prove their invariance. In sections 8 and 9, we construct the Khovanov chain complex $(\CKh(\LL),d)$, prove it is isomorphic to the box tensor product of the type A and type D structures $(\llangle\LL\rrangle,\partial^\boxtimes)$, and prove the invariance of the Khovanov chain homotopy type.

\textbf{Acknowledgements} The author would like to thank Yi Ni for advising him in this project and Daren Chen for helpful conversations.

\section{Links in Connected Sums of Interval Bundles}

Consider the connected sum $M=M_1\#\dots\#M_r$, where $M_i$ is an orientable $3$-manifold that is an interval bundle over a surface $F_i$. Choose $r-1$ separating spheres $S^2$ in $M$: cutting along these spheres separates $M$, without loss of generality, into $M_1\setminus\mathring D^3$, $M_i\setminus(\mathring D^3\sqcup\mathring D^3)$ for $i=2,\dots,r-1$, and $M_r\setminus\mathring D^3$. Let $F_1^\circ=F_1\setminus \mathring D_1^-$, $F_i^\circ=F_i\setminus(\mathring D_{i-1}^+\sqcup \mathring D_i^-)$ for $i=2,\dots,r-1$, and $F_r^\circ=F_r\setminus\mathring D_{r-1}^+$. We glue these copies together along their boundaries to obtain the surface $$\Sigma=\left(\bigsqcup_{i=1}^r F_r^\circ\right)/(\partial D_i^-\sim\partial D_i^+)\subset M,$$
where if we parametrize $\partial D_i^-$ and $\partial D_i^+$ as the unit circle inside $\R^2$, the identification $\partial D_i^-\sim -\partial D_i^+$ is by reflection across the axis $y=-x$. 

\begin{figure}[h]
\centering
\includegraphics[width=\textwidth]{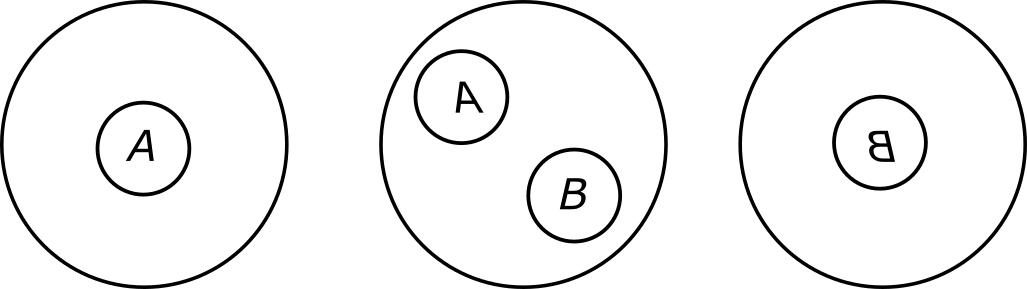}
\caption{$\Sigma$ for $M=\#^3\R P^3$, where we consider $\R P^3$ minus a point as the twisted interval bundle over $\R P^2$, which we draw as a disk quotiented by the antipodal map on the boundary. The disks marked $A$ and $B$ are cut out and glued together by a reflection.}
\label{fig:sigma3}
\end{figure}

Just like for links in $S^3$, links in $M$ can be represented by their projections to $\Sigma$, where we represent the relative heights of the strands by over and under crossings. We first isotope the link so that it is disjoint from each $I=[0,1]$ fiber over $\mathring D_i^\pm$, and the intersections of the link with each separating sphere occur at $\partial D_i\times\{1/2\}$. We choose the link projections to be transverse to itself, have no triple points, and be transverse to $\partial D_i^\pm$ for all $i=1,\dots, r-1$. Clearly these conditions are generic. The link now lives in $\Sigma\times I$, so the projection is the projection of the link onto $\Sigma$, with over and under crossings marked according to the $I$-bundle structure.

The following types of local moves of link diagrams, pictured in Figures ~\ref{fig:r123}, ~\ref{fig:finger-mirror}, and ~\ref{fig:handleslide} describe isotopies of links in $M$:

\begin{itemize}
    \item Reidemeister I, II, and III moves locally in a $D^2$ disjoint from $\partial D_i^\pm$,
    \item Finger moves across a $\partial D_i$, where a finger move takes an arc $a$ that is parallel to a portion of $\partial D_i$ and pushes it across $\partial D_i$ so it becomes the arcs $b,c,$ and $d$.
    \item Mirror moves across a $\partial D_i$, where a mirror move takes a crossing $x$ that is adjacent to $\partial D_i$ and pushes it across $\partial D_i$ so it becomes the crossing $x'$ on the other side.
    \item Handleslides of an arc $a$ above or below $\partial D_i$, where a handleslide takes the arc and moves it above or below the separating sphere. 
\end{itemize}

\begin{figure}[h]
\centering
\begin{subfigure}[b]{0.4\textwidth}
    \centering
    \includegraphics[width=\textwidth]{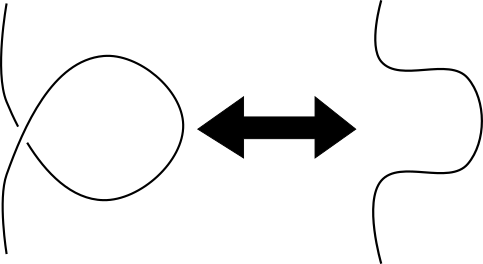}
    \caption{Reidemeister I Move}
    \label{fig:r1}
\end{subfigure}
\hfill
\begin{subfigure}[b]{0.4\textwidth}
    \centering
    \includegraphics[width=\textwidth]{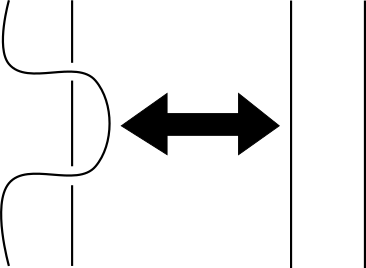}
    \caption{Reidemeister II Move}
    \label{fig:r2}
\end{subfigure}
\hfill
\begin{subfigure}[b]{0.6\textwidth}
    \centering
    \def\svgwidth{.925\linewidth}
    %% Creator: Inkscape 1.1 (c68e22c387, 2021-05-23), www.inkscape.org
%% PDF/EPS/PS + LaTeX output extension by Johan Engelen, 2010
%% Accompanies image file 'drawing.pdf' (pdf, eps, ps)
%%
%% To include the image in your LaTeX document, write
%%   \input{<filename>.pdf_tex}
%%  instead of
%%   \includegraphics{<filename>.pdf}
%% To scale the image, write
%%   \def\svgwidth{<desired width>}
%%   \input{<filename>.pdf_tex}
%%  instead of
%%   \includegraphics[width=<desired width>]{<filename>.pdf}
%%
%% Images with a different path to the parent latex file can
%% be accessed with the `import' package (which may need to be
%% installed) using
%%   \usepackage{import}
%% in the preamble, and then including the image with
%%   \import{<path to file>}{<filename>.pdf_tex}
%% Alternatively, one can specify
%%   \graphicspath{{<path to file>/}}
%% 
%% For more information, please see info/svg-inkscape on CTAN:
%%   http://tug.ctan.org/tex-archive/info/svg-inkscape
%%
\begingroup%
  \makeatletter%
  \providecommand\color[2][]{%
    \errmessage{(Inkscape) Color is used for the text in Inkscape, but the package 'color.sty' is not loaded}%
    \renewcommand\color[2][]{}%
  }%
  \providecommand\transparent[1]{%
    \errmessage{(Inkscape) Transparency is used (non-zero) for the text in Inkscape, but the package 'transparent.sty' is not loaded}%
    \renewcommand\transparent[1]{}%
  }%
  \providecommand\rotatebox[2]{#2}%
  \newcommand*\fsize{\dimexpr\f@size pt\relax}%
  \newcommand*\lineheight[1]{\fontsize{\fsize}{#1\fsize}\selectfont}%
  \ifx\svgwidth\undefined%
    \setlength{\unitlength}{396.78175078bp}%
    \ifx\svgscale\undefined%
      \relax%
    \else%
      \setlength{\unitlength}{\unitlength * \real{\svgscale}}%
    \fi%
  \else%
    \setlength{\unitlength}{\svgwidth}%
  \fi%
  \global\let\svgwidth\undefined%
  \global\let\svgscale\undefined%
  \makeatother%
  \begin{picture}(1,0.4174387)%
    \lineheight{1}%
    \setlength\tabcolsep{0pt}%
    \put(0,0){\includegraphics[width=\unitlength,page=1]{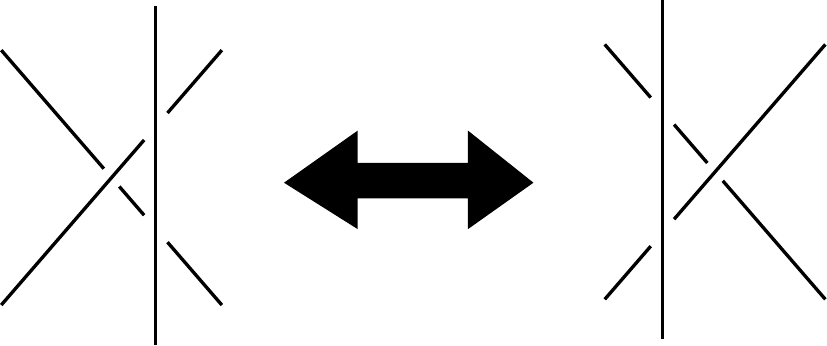}}%
    \put(0.06280528,0.19714496){\makebox(0,0)[lt]{\lineheight{1.25}\smash{\begin{tabular}[t]{l}$e$\end{tabular}}}}%
    \put(0.217888,0.26116164){\makebox(0,0)[lt]{\lineheight{1.25}\smash{\begin{tabular}[t]{l}$c_1$\end{tabular}}}}%
    \put(0.22059319,0.12591496){\makebox(0,0)[lt]{\lineheight{1.25}\smash{\begin{tabular}[t]{l}$c_2$\end{tabular}}}}%
    \put(0.73002209,0.26476813){\makebox(0,0)[lt]{\lineheight{1.25}\smash{\begin{tabular}[t]{l}$c_1$\end{tabular}}}}%
    \put(0.71920261,0.12411161){\makebox(0,0)[lt]{\lineheight{1.25}\smash{\begin{tabular}[t]{l}$c_2$\end{tabular}}}}%
    \put(0.89321974,0.19624322){\makebox(0,0)[lt]{\lineheight{1.25}\smash{\begin{tabular}[t]{l}$d$\end{tabular}}}}%
  \end{picture}%
\endgroup%

    \caption{Reidemeister III Move}
    \label{fig:r3}
\end{subfigure}
\caption{Reidemeister I, II, and III moves}
\label{fig:r123}
\end{figure}

\begin{figure}[h]
\centering
\begin{subfigure}[b]{0.9\textwidth}
    \centering
    \includegraphics[width=\textwidth]{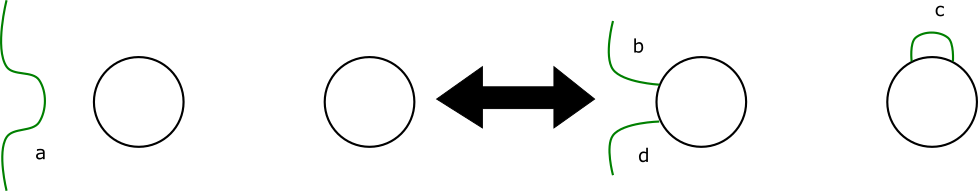}
    \caption{Finger Move}
    \label{fig:finger}
\end{subfigure}
\hfill
\begin{subfigure}[b]{0.9\textwidth}
    \centering
    \includegraphics[width=\textwidth]{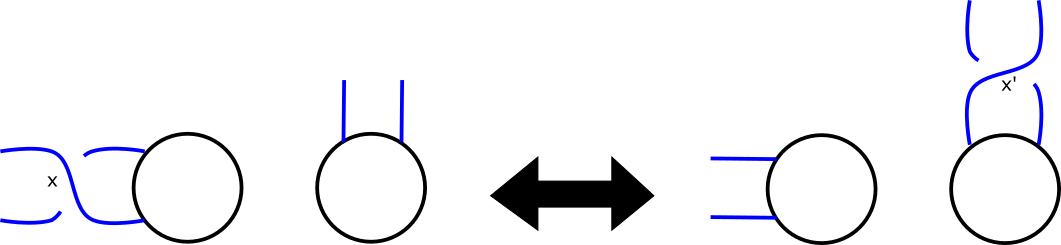}
    \caption{Mirror Move}
    \label{fig:mirror}
\end{subfigure}
\caption{Finger and Mirror moves: the two moves take the left diagram to the right diagram on the same row. In each diagram, the two black circles are $\partial D_i^\pm$ and are identified by a reflection across the line $y=-x$.}
\label{fig:finger-mirror}
\end{figure}

\begin{figure}[h]
    \centering
    \includegraphics[width=0.6\textwidth]{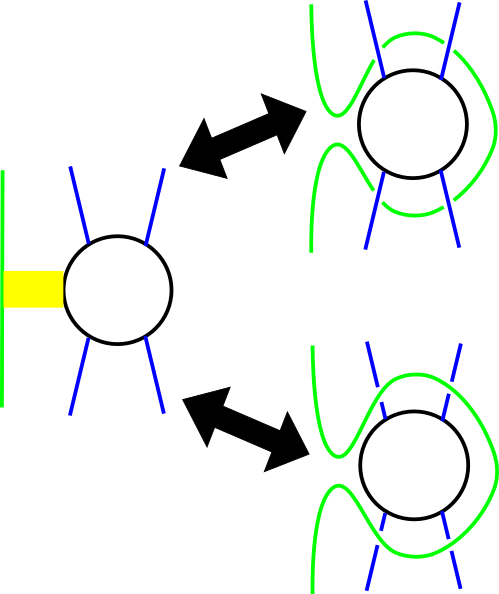}
    \caption{A handleslide of the blue arc below and above $\partial D_i$.}
    \label{fig:handleslide}
\end{figure}

\begin{lemma}
Two link diagrams $L_1,L_2$ in $\Sigma$ represent isotopic links if and only if they are related by a sequence of the above moves.
\end{lemma}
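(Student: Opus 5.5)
The easy direction comes first: each listed move is realized by an ambient isotopy of $M$. For the Reidemeister moves this holds because they take place in a disk $D^2\subset\Sigma$ away from the $\partial D_i^\pm$, so they lift to isotopies supported in $D^2\times I\subset\Sigma\times I$. For finger moves, mirror moves and handleslides, I would write down explicit isotopies supported in a neighborhood $N$ of the separating sphere $S_i$. Here $N$ is the union of the $I$-bundles over collars of $\partial D_i^-$ and $\partial D_i^+$, glued along $S_i$.
\begin{itemize}
\item A \emph{finger move} pushes a strand through $S_i$ at the height $1/2$. The reflection across $y=-x$ dictates where the pushed arc reappears on the other side.
\item A \emph{mirror move} pushes a crossing through $S_i$. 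The gluing reverses the orientation of the circle while preserving the $I$-direction, so the crossing appears reflected.
\item A \emph{handleslide} moves an arc over or under the whole sphere. This is an isotopy in the region of $M$ lying over $D_i^\pm$, which the link was assumed to avoid.
\end{itemize}

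For the converse I would follow the standard Reidemeister argument, in the style of the $I$-bundle case of \cite{aps-i-bundles}. Let $L_t$ be an isotopy between the two links, and make it smooth and generic. After a small perturbation, the diagram condition fails only at finitely many times $t$, and each failure is of codimension one. The first step is to make the isotopy keep the link in the region where projection is defined, namely $\Sigma\times I$ together with the spheres. Since $M\setminus(\Sigma\times I)$ is a union of balls lying over the disks $D_i^\pm$, meeting them only in the spheres $S_i$, a general position argument handles this. Any moment where the link enters such a ball can be replaced by sweeping the arc across the whole sphere, on top or on bottom. That is exactly a handleslide.

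Next I list the codimension-one singularities of the projection and match each to a move.
\begin{itemize}
\item Singularities of the projection away from $\partial D_i^\pm$ give Reidemeister I, II and III, as in the classical proof.
\item A tangency between the projection and a circle $\partial D_i^\pm$ gives a finger move.
\item A double point crossing $\partial D_i^\pm$ gives a mirror move.
\item The link crossing $S_i$ at a height other than $1/2$, which we normalized away, is absorbed by a small isotopy.
\end{itemize}
Each such event is local, so the diagram before and after differs by the corresponding move composed with planar isotopy of $\Sigma$ fixing the circles.

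The main obstacle is the first step: showing that an arbitrary isotopy in $M$ can be pushed off the balls over the $D_i^\pm$ using only handleslides. I would argue by general position that a one-parameter family of arcs meets the cores of these balls transversally, at finitely many times. Each such passage can be traded for a handleslide over or under the sphere.

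A second point to check is the twisted bundles (for example over $\R P^2$), where "over" and "under" are not globally defined. Since all the moves are local, I would fix the local orientations of the fibers near each move; that is enough for the argument.
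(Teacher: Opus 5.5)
Your proposal is correct and takes essentially the same route as the paper. The paper also fixes the separating spheres. It reads off Reidemeister I, II and III from cusps, tangencies and triple points inside the $F_i^\circ$, finger moves from a strand becoming tangent to $\partial D_i$, mirror moves from a double point passing through $\partial D_i$, and handleslides from moving an arc above or below the removed balls. Your version is somewhat more complete than the paper's sketch: you also verify the easy direction, and you make explicit the general-position count of passages through the cores of the cap regions over $D_i^\pm$, where the paper simply asserts that such passages are handleslides.
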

\begin{proof}
As usual, we consider the types of singularities that can occur when we isotope a link diagram $L$. We first fix the set of $r-1$ separating spheres beforehand and consider isotopies that are allowed to cross the separating spheres. Thus, in our diagram, we fix $D_i^\pm$. Within each $F_i^\circ$, cusps, tangencies, and triple points of intersections of $L$ correspond to Reidemeister I, II, and III moves, respectively. Having fixed $D_i^\pm$, there are no cusps of these borders.

Isotopies that cross the separating spheres induce diagram moves that move a portion of the link across $\partial D_i$ for some $i$. A tangency between a strand and $\partial D_i$ corresponds to a finger move across $\partial D_i$. Finally, a triple point between a pair of strands and $\partial D_i$ corresponds to a mirror move across $\partial D_i$.

In addition to isotopies of the link diagram $L$ in $\Sigma$, there are isotopies of the link in $M$ that move a portion of the link above or below the balls $D^3$ that we cut out. Taking an arc and moving it above or below a $D^3$ to the other side corresponds in the link diagram to a handleslide of the arc across $\partial D_i$ for some $i$.
\end{proof}

\section{Tangles and Resolutions}

Much of the notation in the rest of the paper will be borrowed from \cite{roberts-type-d} and \cite{roberts-type-a}. Fix $p$ such that $0\le p\le r$. As described in the previous section, we draw links in $M_1\#\dots\#M_r$ using link diagrams in $\Sigma$, which we think of as $$(\overleftarrow{\U}\sqcup\overrightarrow{\U})/(\partial D_p^-\sim\partial D_p^+),$$ where 
$$\overleftarrow{\U}=\left(\bigsqcup_{i=1}^p F_i^\circ\right)/(\partial D_i^-\sim\partial D_i^+),\hspace{10mm}\overrightarrow{\U}=\left(\bigsqcup_{i=p+1}^r F_i^\circ\right)/(\partial D_i^-\sim\partial D_i^+)$$ are surfaces with boundary $\partial D_p^-$ and $\partial D_p^+$, respectively. We call $\overleftarrow{\U}$ and $\overrightarrow{\U}$ the left and right halves of $\Sigma$, respectively. When we do not need to be specific about which half it is, we write $\U$ to mean either $\overleftarrow{\U}$ or $\overrightarrow{\U}$.

Let $P_n$ be the set of points $p_1,\dots,p_{2n}$ on $\partial D_p$ ordered cyclically counterclockwise on $\partial D_p^+$ and clockwise on $\partial D_p^-$. 

The following notions that we define for right tangles have similar definitions for left tangles.

\begin{definition}
A right tangle $\overrightarrow{\TT}$ is a smooth, proper embedding of $n=n_A(\overrightarrow{\TT})$ copies of the interval $[0,1]$ and $n_C(\overrightarrow{\TT})$ copies of $S^1$ into $\overrightarrow{\U}$ whose boundary is the set of $2n$ points $P_n$ in $\partial D_p$.
\end{definition}

We consider two right tangles $\overrightarrow{\TT}_1$ and $\overrightarrow{\TT}_2$ equivalent if there is an isotopy of $\overrightarrow{\U}$ taking $\overrightarrow{\TT}_1$ to $\overrightarrow{\TT}_2$ pointwise fixing the boundary $\partial D_p$. 

Tangles can be isotoped to have a generic projection as described in Section 2. We record over/under-crossing of strands to obtain a tangle diagram for $\overrightarrow{\TT}$, which we will denote $\overrightarrow{T}$. Different diagrams for $\overrightarrow{\TT}$ are related by sequences of isotopies in $F_i$, Reidemeister moves, and finger moves, mirror moves, and handleslides across $\partial D_j$ for $j\ne p$. 

The crossings of $\overrightarrow{T}$ form a set $\CR(\overrightarrow{T})$. If we orient $\overrightarrow{\TT}$ then each crossing in a diagram $\overrightarrow{T}$ is either a positive crossing or a negative crossing: take a small neighborhood $B^3\subset M$ around the crossing with orientation given by the orientation of $M$ such that the intersection with the tangle is a pair of disjoint (oriented) line segments, then the tangle in that neighborhood is isotopic to one of the two local pictures in Figure~\ref{fig:positive-negative}.

\begin{figure}[h]
\centering
\includegraphics[width=0.4\textwidth]{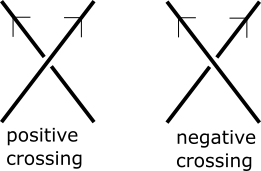}
\caption{Positive and Negative crossings}
\label{fig:positive-negative}
\end{figure}

We will denote by $n_+(\overrightarrow{T})$ the number of positive crossings and by $n_-(\overrightarrow{T})$ the number of negative crossings.

Following the terminology of \cite{aps-i-bundles}, we say that a circle is trivial if it bounds a disk in $\Sigma$, and a circle is bounding if it bounds either a disk or a M\"obius band in $\Sigma$. 

\subsection{Resolutions}

A resolution of a tangle diagram $\overrightarrow{T}$ consists of a chosen resolution $\rho$ for each crossing in $\CR(T)$ together with a crossingless matching $\overleftarrow m$ in $\overleftarrow{\U}$.

\begin{definition}
A crossingless matching, or matching for short, $m$ of $P_n$ in $\U$ is a proper embedding of $n$ unoriented arcs $\alpha_i$, $i=1,\dots,n$ in $\U$, such that $\partial\alpha_i\subset P_n$ for each $i$.
\end{definition}

The arcs of a matching are allowed to intersect $\partial D_i$ for $i\ne p$. Two matchings $m_1$ and $m_2$ in $\U$ are considered equivalent if they are isotopic in $\U$ by an isotopy that pointwise fixes $\partial\U=\partial D_p$. The equivalence classes of matchings on $P_n$ is denoted $\MATCH(n)$.

\begin{definition}
A bridge for a matching $m$ in $\U$ is an embedding $\gamma:[0,1]\to\U$ such that
\begin{enumerate}
    \item $\gamma(0)$ and $\gamma(1)$ are on distinct arcs of $m$,
    \item the image $\gamma((0,1))$ is disjoint from $m$.
\end{enumerate}

\end{definition}

We consider a bridge $\gamma_1$ for $m_1$ to be equivalent to a bridge $\gamma_2$ for $m_2$ if there is an isotopy of crossingless matchings, fixing $\partial\U$, that takes $m_1$ to $m_2$ and $\gamma_1$ to $\gamma_2$. When $m=m_1=m_2$, one such isotopy we will use involves sliding a foot of a bridge along the arc it abuts in $m$. We denote the equivalence classes of bridges for a crossingless matching $m$ to be $\BRIDGE(m)$.

\begin{figure}[h]
    \centering
    \includegraphics[width=0.8\textwidth]{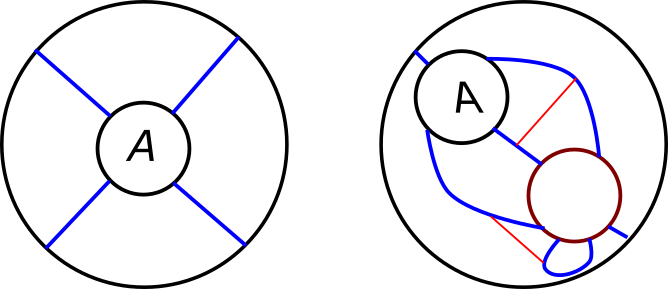}
    \caption{An example of a $6$-pointed matching (shown in blue) in $\protect\overleftarrow{\U}=\R P^2\#\R P^2\setminus\mathring D^2$ and two bridges for the matching (the red arcs that have endpoints on the matching).}
    \label{fig:matching-bridge-example}
\end{figure}

\begin{definition}
Given a bridge $\gamma$ for a crossingless matching $m$, we define $m_\gamma$ to be the crossingless matching obtained by surgery along $\gamma$. The co-core of the surgery is a bridge in $m_\gamma$ denoted by $\gamma^\dagger$.
\end{definition}

The notion of surgery descends to equivalence classes of crossingless matchings. 

\begin{figure}[h]
    \centering
    \def\svgwidth{.9\linewidth}
    %% Creator: Inkscape 1.1 (c68e22c387, 2021-05-23), www.inkscape.org
%% PDF/EPS/PS + LaTeX output extension by Johan Engelen, 2010
%% Accompanies image file 'surgery.pdf' (pdf, eps, ps)
%%
%% To include the image in your LaTeX document, write
%%   \input{<filename>.pdf_tex}
%%  instead of
%%   \includegraphics{<filename>.pdf}
%% To scale the image, write
%%   \def\svgwidth{<desired width>}
%%   \input{<filename>.pdf_tex}
%%  instead of
%%   \includegraphics[width=<desired width>]{<filename>.pdf}
%%
%% Images with a different path to the parent latex file can
%% be accessed with the `import' package (which may need to be
%% installed) using
%%   \usepackage{import}
%% in the preamble, and then including the image with
%%   \import{<path to file>}{<filename>.pdf_tex}
%% Alternatively, one can specify
%%   \graphicspath{{<path to file>/}}
%% 
%% For more information, please see info/svg-inkscape on CTAN:
%%   http://tug.ctan.org/tex-archive/info/svg-inkscape
%%
\begingroup%
  \makeatletter%
  \providecommand\color[2][]{%
    \errmessage{(Inkscape) Color is used for the text in Inkscape, but the package 'color.sty' is not loaded}%
    \renewcommand\color[2][]{}%
  }%
  \providecommand\transparent[1]{%
    \errmessage{(Inkscape) Transparency is used (non-zero) for the text in Inkscape, but the package 'transparent.sty' is not loaded}%
    \renewcommand\transparent[1]{}%
  }%
  \providecommand\rotatebox[2]{#2}%
  \newcommand*\fsize{\dimexpr\f@size pt\relax}%
  \newcommand*\lineheight[1]{\fontsize{\fsize}{#1\fsize}\selectfont}%
  \ifx\svgwidth\undefined%
    \setlength{\unitlength}{450.28832623bp}%
    \ifx\svgscale\undefined%
      \relax%
    \else%
      \setlength{\unitlength}{\unitlength * \real{\svgscale}}%
    \fi%
  \else%
    \setlength{\unitlength}{\svgwidth}%
  \fi%
  \global\let\svgwidth\undefined%
  \global\let\svgscale\undefined%
  \makeatother%
  \begin{picture}(1,0.26436863)%
    \lineheight{1}%
    \setlength\tabcolsep{0pt}%
    \put(0,0){\includegraphics[width=\unitlength,page=1]{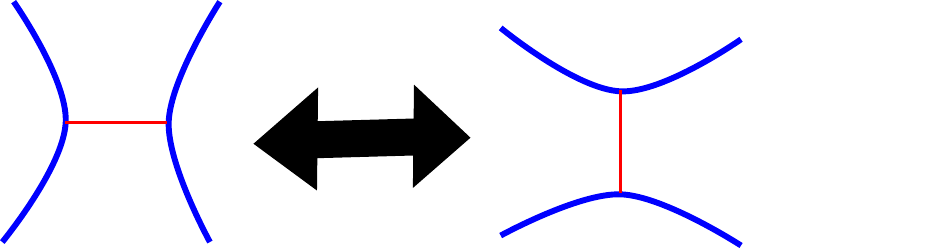}}%
    \put(0.11613851,0.15264169){\makebox(0,0)[lt]{\lineheight{1.25}\smash{\begin{tabular}[t]{l}$\gamma$\end{tabular}}}}%
    \put(0.68316674,0.09881957){\makebox(0,0)[lt]{\lineheight{1.25}\smash{\begin{tabular}[t]{l}$\gamma^\dagger$\end{tabular}}}}%
  \end{picture}%
\endgroup%

    \caption{Surgery on a bridge $\gamma$.}
    \label{fig:surgery}
\end{figure}

\begin{definition}
A resolution $r$ of $\overrightarrow{T}$ is a pair $(\rho,\overleftarrow{m})$, where $\rho:\CR(\overrightarrow{T})\to\{0,1\}$ and $\overleftarrow{m}$ is a crossingless matching of $P_n$ in $\overleftarrow{\U}$. The resolution diagram, $r(\overrightarrow{T})$, is the crossingless link in $\Sigma$ obtained by gluing $\overrightarrow{T}\subset\overrightarrow{\U}$ to $\overleftarrow{m}\subset\overleftarrow{\U}$ along $P_n$, and locally replacing (disjoint) neighborhoods of each crossing $c\in\CR(\overrightarrow{T})$ using the following rule depicted in Figure~\ref{fig:resolutions}.

\begin{figure}[h]
\centering
    \def\svgwidth{.9\linewidth}
    %% Creator: Inkscape 1.1 (c68e22c387, 2021-05-23), www.inkscape.org
%% PDF/EPS/PS + LaTeX output extension by Johan Engelen, 2010
%% Accompanies image file 'resolutions.pdf' (pdf, eps, ps)
%%
%% To include the image in your LaTeX document, write
%%   \input{<filename>.pdf_tex}
%%  instead of
%%   \includegraphics{<filename>.pdf}
%% To scale the image, write
%%   \def\svgwidth{<desired width>}
%%   \input{<filename>.pdf_tex}
%%  instead of
%%   \includegraphics[width=<desired width>]{<filename>.pdf}
%%
%% Images with a different path to the parent latex file can
%% be accessed with the `import' package (which may need to be
%% installed) using
%%   \usepackage{import}
%% in the preamble, and then including the image with
%%   \import{<path to file>}{<filename>.pdf_tex}
%% Alternatively, one can specify
%%   \graphicspath{{<path to file>/}}
%% 
%% For more information, please see info/svg-inkscape on CTAN:
%%   http://tug.ctan.org/tex-archive/info/svg-inkscape
%%
\begingroup%
  \makeatletter%
  \providecommand\color[2][]{%
    \errmessage{(Inkscape) Color is used for the text in Inkscape, but the package 'color.sty' is not loaded}%
    \renewcommand\color[2][]{}%
  }%
  \providecommand\transparent[1]{%
    \errmessage{(Inkscape) Transparency is used (non-zero) for the text in Inkscape, but the package 'transparent.sty' is not loaded}%
    \renewcommand\transparent[1]{}%
  }%
  \providecommand\rotatebox[2]{#2}%
  \newcommand*\fsize{\dimexpr\f@size pt\relax}%
  \newcommand*\lineheight[1]{\fontsize{\fsize}{#1\fsize}\selectfont}%
  \ifx\svgwidth\undefined%
    \setlength{\unitlength}{627.74395319bp}%
    \ifx\svgscale\undefined%
      \relax%
    \else%
      \setlength{\unitlength}{\unitlength * \real{\svgscale}}%
    \fi%
  \else%
    \setlength{\unitlength}{\svgwidth}%
  \fi%
  \global\let\svgwidth\undefined%
  \global\let\svgscale\undefined%
  \makeatother%
  \begin{picture}(1,0.18652512)%
    \lineheight{1}%
    \setlength\tabcolsep{0pt}%
    \put(0,0){\includegraphics[width=\unitlength,page=1]{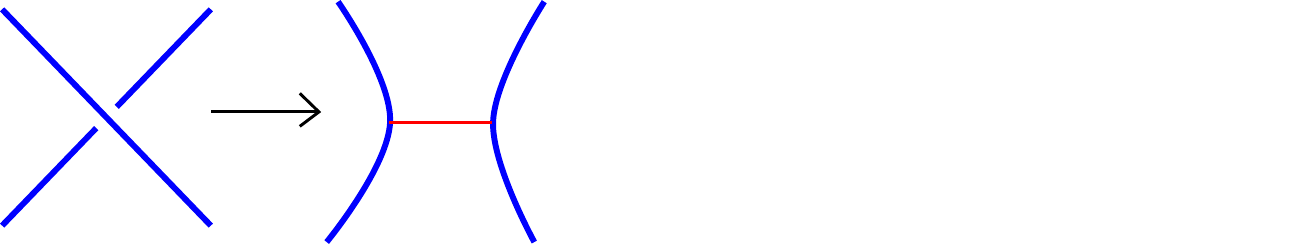}}%
    \put(0.16196115,0.12210005){\makebox(0,0)[lt]{\lineheight{1.25}\smash{\begin{tabular}[t]{l}$\rho(c)=0$\end{tabular}}}}%
    \put(0,0){\includegraphics[width=\unitlength,page=2]{resolutions.pdf}}%
    \put(0.71124227,0.11748611){\makebox(0,0)[lt]{\lineheight{1.25}\smash{\begin{tabular}[t]{l}$\rho(c)=1$\end{tabular}}}}%
    \put(0,0){\includegraphics[width=\unitlength,page=3]{resolutions.pdf}}%
  \end{picture}%
\endgroup%

\caption{$0$ and $1$ resolutions of a crossing}
\label{fig:resolutions}
\end{figure}

\end{definition}

The set of resolutions will be denoted $\RES(\overrightarrow{T})$. The local arcs introduced by $\rho(\overrightarrow{T})$ are called resolution bridges. Since we choose the projection to have the crossings away from $\partial D_j$, the resolution bridges will not intersect $\partial D_j$ as well. We will use the terms bridge and arc interchangeably. The resolution bridge for a crossing $c$ will be denoted $\gamma_{r,c}$ (or just $\gamma_c$ when the resolution is understood). If $\rho(c)=0$ we will call $\gamma_c$ an active bridge for $r$, while if $\rho(c)=1$ it will be called inactive. Let $\overrightarrow{\Br}(r)$ denote the set of resolution bridges for $r$ and let $\overrightarrow{\ACT}(r)$ be the set of active bridges for $r$. 

A resolution diagram $r(\overrightarrow{T})$ consists of a crossingless diagram of circles, which we divide into two groups: the free circles that are contained in the interior of $\overrightarrow{\U}$, and cleaved circles that intersect $\partial D_p$. Let $\FREE(r)$ denote the set of free circles and $\cl(r)$ denote the set of cleaved circles. 

Let $r=(\rho,\overleftarrow{m})$ be a resolution, and $\gamma$ be a resolution bridge or a bridge for $\overleftarrow{m}$. We denote by $r_{\gamma}$ the resolution obtained by surgering the diagram for $r$ along $\gamma$. The cocore of the surgery is a bridge $\gamma^\dagger$ for $r_{\gamma}$.

There are three cases for the effect of surgery on a bridge $\gamma$ for $r$:

\begin{enumerate}
    \item surgery on $\gamma$ merges two distinct circles $\{C_a,C_b\}$ containing $\gamma(0)$ and $\gamma(1)$ to obtain a circle $C_\gamma$ that contains both feet of $\gamma^\dagger$.
    \item surgery on $\gamma$ divides a circle $C$ of $r$, which contains both feet of $\gamma$, into two circles $C_a$ and $C_b$ that contain the feet of $\gamma^\dagger$
    \item surgery on $\gamma$ is a 1-1 bifurcation, that is $\gamma$ has both feet on a circle $C$ of $r$ and $\gamma^\dagger$ also has both feet on a circle $C_\gamma$ of $r_\gamma$.
\end{enumerate}

\begin{figure}[h]
\centering
\def\svgwidth{.9\linewidth}
%% Creator: Inkscape 1.1 (c68e22c387, 2021-05-23), www.inkscape.org
%% PDF/EPS/PS + LaTeX output extension by Johan Engelen, 2010
%% Accompanies image file '1-1-bifurcation-example.pdf' (pdf, eps, ps)
%%
%% To include the image in your LaTeX document, write
%%   \input{<filename>.pdf_tex}
%%  instead of
%%   \includegraphics{<filename>.pdf}
%% To scale the image, write
%%   \def\svgwidth{<desired width>}
%%   \input{<filename>.pdf_tex}
%%  instead of
%%   \includegraphics[width=<desired width>]{<filename>.pdf}
%%
%% Images with a different path to the parent latex file can
%% be accessed with the `import' package (which may need to be
%% installed) using
%%   \usepackage{import}
%% in the preamble, and then including the image with
%%   \import{<path to file>}{<filename>.pdf_tex}
%% Alternatively, one can specify
%%   \graphicspath{{<path to file>/}}
%% 
%% For more information, please see info/svg-inkscape on CTAN:
%%   http://tug.ctan.org/tex-archive/info/svg-inkscape
%%
\begingroup%
  \makeatletter%
  \providecommand\color[2][]{%
    \errmessage{(Inkscape) Color is used for the text in Inkscape, but the package 'color.sty' is not loaded}%
    \renewcommand\color[2][]{}%
  }%
  \providecommand\transparent[1]{%
    \errmessage{(Inkscape) Transparency is used (non-zero) for the text in Inkscape, but the package 'transparent.sty' is not loaded}%
    \renewcommand\transparent[1]{}%
  }%
  \providecommand\rotatebox[2]{#2}%
  \newcommand*\fsize{\dimexpr\f@size pt\relax}%
  \newcommand*\lineheight[1]{\fontsize{\fsize}{#1\fsize}\selectfont}%
  \ifx\svgwidth\undefined%
    \setlength{\unitlength}{448.80305253bp}%
    \ifx\svgscale\undefined%
      \relax%
    \else%
      \setlength{\unitlength}{\unitlength * \real{\svgscale}}%
    \fi%
  \else%
    \setlength{\unitlength}{\svgwidth}%
  \fi%
  \global\let\svgwidth\undefined%
  \global\let\svgscale\undefined%
  \makeatother%
  \begin{picture}(1,0.33549262)%
    \lineheight{1}%
    \setlength\tabcolsep{0pt}%
    \put(0,0){\includegraphics[width=\unitlength,page=1]{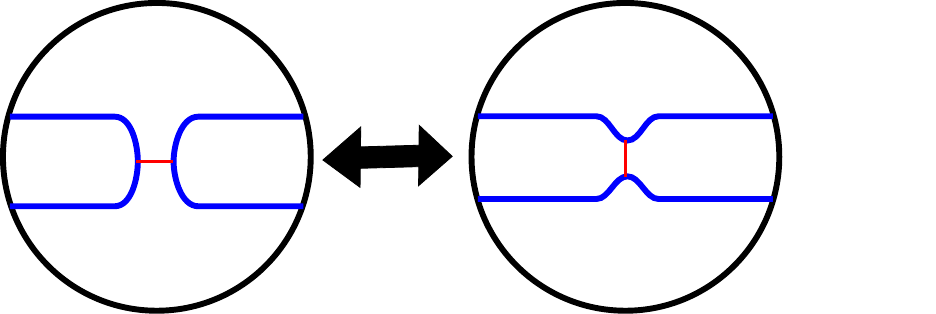}}%
    \put(0.15800288,0.17603846){\makebox(0,0)[lt]{\lineheight{1.25}\smash{\begin{tabular}[t]{l}$\gamma$\end{tabular}}}}%
    \put(0.68211819,0.15770433){\makebox(0,0)[lt]{\lineheight{1.25}\smash{\begin{tabular}[t]{l}$\gamma^\dagger$\end{tabular}}}}%
  \end{picture}%
\endgroup%

\caption{An example of a 1-1 bifurcation: surgery on $\gamma$ takes the circle on the left in $\R P^3$ to the circle on the right. Surgery on $\gamma^\dagger$ is also a 1-1 bifurcation.}
\label{fig:1-1-bifurcation-example}
\end{figure}

The last case does not happen for tangles in $S^3\setminus D^3$.

If $\eta$ is a bridge that is not equal to $\gamma$ if $\gamma$ is a resolution bridge or not in the same equivalence class as $\gamma$ if $\gamma$ is a bridge for $\overleftarrow{m}$, then $\eta$ is also a bridge in $r_\gamma$, which we call the image of $\eta$ in $r_\gamma$ and we will often denote by $\overline\eta$. We write $r_{\gamma,\eta}$ to denote the resolution obtained by surgery on $\gamma$ followed by surgery on $\overline\eta$.

\subsection{1-1 bifurcations in pairs of bridges}

We must understand where 1-1 bifurcations may arise when performing surgery on two arcs. Let $r$ be a resolution of a tangle diagram, and let $\alpha,\beta$ be two arcs in the diagram for $r$. There are four cases that involve one or more 1-1 bifurcations:

\begin{enumerate}
    \item[I)] The two arcs $\alpha,\beta$ are both 1-1 bifurcations.
    \item[II)] $\alpha$ is a 1-1 bifurcation, $\beta$ is not a 1-1 bifurcation, and the image of $\alpha$ in $r_\beta$ is still a 1-1 bifurcation.
    \item[III)] $\alpha$ is a 1-1 bifurcation, $\beta$ is not a 1-1 bifurcation, but the image of $\alpha$ in $r_\beta$ is no longer a 1-1 bifurcation.
    \item[IV)] $\alpha$ and $\beta$ are both not 1-1 bifurcations, but the image of $\alpha$ in $r_\beta$ is a 1-1 bifurcation.
\end{enumerate}

\begin{lemma}\label{lem:bif-case-iii}
In the setting of case III), let $\alpha$ have ends on the circle $C$, then $\beta$ splits $C$ into two circles $C_1,C_2$, and the image of $\alpha$ in $r_\beta$ merges $C_1$ and $C_2$.
\end{lemma}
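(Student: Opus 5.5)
We argue by elimination on the three possible effects of surgery (merge, divide, 1-1 bifurcation) listed above, applied first to $\beta$ in $r$ and then to $\overline\alpha$ in $r_\beta$. Since $\alpha$ is a 1-1 bifurcation, both feet of $\alpha$ lie on a single circle $C$ of $r$. By the hypothesis of case III), $\beta$ is not a 1-1 bifurcation, so surgery on $\beta$ either merges two circles or divides one. Likewise the image $\overline\alpha$ in $r_\beta$ is not a 1-1 bifurcation, so surgery on $\overline\alpha$ either merges or divides.

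\textbf{Step 1: $\beta$ must touch $C$.} Suppose neither foot of $\beta$ lies on $C$. Then $C$ is a circle of $r_\beta$ and $\overline\alpha$ is the same arc with both feet on $C$. Surgery on $\overline\alpha$ is local near $\alpha$, so its result on $C$ is identical to surgery on $\alpha$ in $r$, hence still 1-1. This contradicts the case III) hypothesis, so at least one foot of $\beta$ lies on $C$.

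\textbf{Step 2: $\beta$ cannot merge $C$ with another circle.} Suppose $\beta$ merges $C$ with a circle $C'$ to give $C_\beta$. Then both feet of $\overline\alpha$ lie on $C_\beta$, so $\overline\alpha$ cannot merge and must divide $C_\beta$. Reverse the order of surgeries: surgery on $\alpha$ and then on $\beta$ gives the same diagram $r_{\alpha,\beta}=r_{\beta,\alpha}$, since the two surgeries are supported on disjoint neighborhoods. Surgery on $\alpha$ turns $C$ into one circle $C_\alpha$. Then surgery on $\beta$, which joins $C_\alpha$ to $C'$, produces a single circle. So $r_{\alpha,\beta}$ has one circle where $r_{\beta,\alpha}$ has two, a contradiction. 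The count is a global one: all other circles are unchanged and we compare only the circles involved, giving $1$ on one side and $2$ on the other.

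\textbf{Step 3: conclude.} By Steps 1 and 2, surgery on $\beta$ divides $C$ into $C_1$ and $C_2$. It remains to show that $\overline\alpha$ has one foot on each, so that it merges them. If both feet of $\overline\alpha$ were on the same circle, say $C_1$, then surgery on $\overline\alpha$ would divide $C_1$, yielding three circles from $C$. Compare again with $r_{\alpha,\beta}$: surgery on $\alpha$ gives the single circle $C_\alpha$, and surgery on $\beta$ then yields at most two circles. This contradicts the count of three.

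Alternatively, once the division is established, $\overline\alpha$ cannot divide, because $\chi$ and parity considerations show the total circle count changes by at most one per surgery. The main obstacle is justifying the commutation $r_{\alpha,\beta}=r_{\beta,\alpha}$ and the circle-count bookkeeping; this follows because the two surgeries are supported in disjoint neighborhoods of $\alpha$ and $\beta$.
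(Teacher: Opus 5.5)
Your proof is correct and follows the same case elimination as the paper: $\beta$ must touch $C$, it cannot merge $C$ with another circle, and $\overline\alpha$ cannot have both feet on just one of $C_1,C_2$. The only difference is how the contradictions are obtained. You compare circle counts in $r_{\alpha,\beta}=r_{\beta,\alpha}$ in both cases. The paper instead shows directly that $\overline\alpha$ remains a 1-1 bifurcation in the merge case, and in the split case it surgers back along $\beta^\dagger$ to see that $\alpha$ would have divided $C$ in $r$.
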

\begin{proof}
Clearly $\beta$ must have at least one foot on $C$. Suppose $\beta$ merges $C$ with $C'$ to get $C''$, then the image of $\alpha$ in $r_\beta$ is a 1-1 bifurcation: the result of surgery on $\alpha$ in $r_\beta$ is to change $C''$ to a new circle which is the result of surgery on $\alpha$ in $r$ with $C'$ attached along $\beta$.

If $\beta$ splits $C$ into $C_1,C_2$, but both feet of $\alpha$ are on one of the new circles, say $C_1$, then by assumption the image of $\alpha$ in $r_\beta$ splits $C_1$ into two circles $C_3,C_4$ where the image of $\beta^\dagger$ in $r_{\beta,\alpha}$ has a foot on $C_2$ and a foot on $C_3$. Then in $r$, $\alpha$ must be a split map: it splits $C$ into $C_2\#C_3$ and $C_4$. This is a contradiction.
\end{proof}

\begin{lemma}\label{lem:bif-case-iv}
In the setting of case IV), $\alpha$ and $\beta$ are both merges of two circles. Furthermore, the image of $\beta$ in $r_\alpha$ is a 1-1 bifurcation.
\end{lemma}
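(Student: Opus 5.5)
Case IV): $\alpha,\beta$ are not 1-1 bifurcations in $r$, but the image $\overline\alpha$ in $r_\beta$ is one. The plan is to rule out every other split/merge combination for the pair, then obtain the symmetric claim about $\beta$. The basic tool is the counting fact already used in Lemma~\ref{lem:bif-case-iii}: a merge or split changes the number of circles by one, and a 1-1 bifurcation leaves it unchanged. Every resolution reached from $r$ by surgering both $\alpha$ and $\beta$, in either order, is the same diagram $r_{\alpha,\beta}=r_{\beta,\alpha}$, since the surgeries are along disjoint arcs.

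\emph{Step 1: $\beta$ is a merge.} Suppose instead that $\beta$ splits a circle $C$ into $C_1,C_2$. Then $\overline\alpha$ has both feet on a single circle of $r_\beta$, say $C_1$. In $r$, $\alpha$ then has both feet on $C$, so it is a split (it is not a bifurcation and not a merge). We now invert the configuration: in $r_\beta$, the cocore $\beta^\dagger$ merges $C_1$ and $C_2$ back into $C$. Consider the pair $(\overline\alpha,\beta^\dagger)$ in $r_\beta$. Here $\overline\alpha$ is a 1-1 bifurcation and $\beta^\dagger$ is a merge, so it is not one. The image of $\overline\alpha$ in $(r_\beta)_{\beta^\dagger}=r$ is $\alpha$, which is not a 1-1 bifurcation. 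So the pair $(\overline\alpha,\beta^\dagger)$ falls under case III), and Lemma~\ref{lem:bif-case-iii} says $\beta^\dagger$ must split the circle carrying $\overline\alpha$. But $\beta^\dagger$ merges, a contradiction. Hence $\beta$ merges two circles $C_1,C_2$ into $C$.

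\emph{Step 2: $\alpha$ is a merge.} By Step 1, $\overline\alpha$ has both feet on $C$. In $r$ the feet of $\alpha$ therefore lie on $C_1\cup C_2$.
\begin{itemize}
\item If both feet of $\alpha$ lie on one circle, say $C_1$, then $\alpha$ is a split of $C_1$. Merging in $C_2$ along $\beta$, which is disjoint from $\alpha$, then makes $\overline\alpha$ a split of $C$, not a 1-1 bifurcation. This is the same local argument as the first paragraph of the proof of Lemma~\ref{lem:bif-case-iii}: attaching a circle along a disjoint band does not change the type of $\alpha$.
\item Otherwise $\alpha$ has one foot on $C_1$ and one on $C_2$, so $\alpha$ merges $C_1$ and $C_2$.
\end{itemize}

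\emph{Step 3: the image of $\beta$ in $r_\alpha$ is a 1-1 bifurcation.} Count circles on the common diagram $r_{\alpha,\beta}=r_{\beta,\alpha}$. Along the route through $\beta$ first, $\beta$ merges $C_1,C_2$ (net $-1$) and then $\overline\alpha$ bifurcates (net $0$), so $r_{\alpha,\beta}$ has one fewer circle than $r$. Along the route through $\alpha$ first, $\alpha$ merges $C_1,C_2$ (net $-1$). So $\overline\beta$ in $r_\alpha$ leaves the circle count unchanged. Both of its feet lie on the merged circle, so it is not a merge, and by the count it is not a split. Hence $\overline\beta$ is a 1-1 bifurcation.

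The main obstacle is Step 1: checking that the cocore trick produces exactly the hypotheses of case III). In particular, the identification of the image of $\overline\alpha$ in $(r_\beta)_{\beta^\dagger}$ with $\alpha$ in $r$ needs to be justified carefully.
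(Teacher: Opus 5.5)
Your proof is correct, and for the first assertion it uses the same device as the paper. In $r_\beta$ the pair $(\overline\alpha,\beta^\dagger)$ falls under case III, and the image of $\overline\alpha$ in $(r_\beta)_{\beta^\dagger}=r$ is $\alpha$. However, you only use half of the conclusion of Lemma~\ref{lem:bif-case-iii}. That lemma says $\beta^\dagger$ splits the circle $C$ carrying $\overline\alpha$ into two circles $C_1,C_2$, \emph{and} that $\alpha$ merges $C_1$ and $C_2$. So $\beta$ and $\alpha$ are both merges of $C_1,C_2$ at once. This is how the paper reads it ("implied by the previous lemma"), and it makes your contradiction setup in Step 1 and your case analysis in Step 2 unnecessary, though both are valid.

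For the second assertion the routes differ. The paper argues by contradiction with a cocore: if $\overline\beta$ split $C$ into $C_3,C_4$, then the image of $\alpha^\dagger$ in $r_{\alpha,\beta}$ would have to merge $C_3,C_4$ back to recover $r_\beta$. That would force the image of $\alpha$ in $r_\beta$ to be a split. You instead count circles on $r_{\alpha,\beta}=r_{\beta,\alpha}$: a merge changes the count by $-1$, a split by $+1$, a 1-1 bifurcation by $0$, and a bridge with both feet on one circle is never a merge. This reaches the same conclusion more directly and avoids tracking a second cocore. It is the same bookkeeping the paper later uses to enumerate configurations in the proof of Theorem~\ref{thm:existence-squares}.

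The point you flag as the main obstacle is standard: the image of $\overline\alpha$ in $(r_\beta)_{\beta^\dagger}$ is $\alpha$ because surgery on $\beta^\dagger$ undoes surgery on $\beta$ in a neighborhood disjoint from $\alpha$. The paper uses this without comment.
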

\begin{proof}
The setting of IV) is obtained from III) by doing surgery on $\beta$, so the first assertion is implied by the previous lemma.

Let $C_1,C_2$ be the two circles that $\alpha$ and $\beta$ have feet on, so that surgery on $\alpha$ or $\beta$ merges $C_1$ and $C_2$ into $C$. Let $\overline{\beta}$ denote the image of $\beta$ in $r_\alpha$, $\overline{\beta}$ has both feet on $C$. Suppose surgery on $\overline{\beta}$ splits $C$ into two circles $C_3$ and $C_4$, then $\overline{\alpha^\dagger}$, the image of $\alpha^\dagger$ in $r_{\alpha,\overline{\beta}}$, has feet on $C_3$ or $C_4$ (or both). Do surgery on $\overline{\alpha^\dagger}$ to get back $r_\beta$. We know $\alpha$ has both feet on $C$ in $r_\beta$, therefore $\overline{\alpha^\dagger}$ had to merge the two circles $C_3$ and $C_4$, so the image of $\alpha$ in $r_\beta$ splits $C$ into $C_3$ and $C_4$. This is a contradiction.
\end{proof}

\section{Cleaved Links Algebra}

Let $n\ge 1$. We will define differential bigraded algebras $\BB\Gamma_n(M,p)$ for all $0\le p\le r$, where $p$ specifies which direct summand of $M=M_1\#\dots \#M_r$ to cut along. These are analogous to the bigraded algebras $\BB\Gamma_n$ defined in \cite{roberts-type-d}, and indeed, $\BB\Gamma_n(S^3,0)=\BB\Gamma_n$. They are defined as quotients of quiver algebras on directed graphs $\Gamma_n(M,p)$ that we will construct.

\subsection{The vertices of \texorpdfstring{$\Gamma_n$}{Gamma_n}}

The vertices of $\Gamma_n(M,p)=\Gamma_n$ consist of crossingless link diagrams in $\Sigma$ where each component intersects $\partial D_p$ and is decorated with a sign in $\{+,-\}$. 

\begin{definition}
An $n$-cleaved link $L$ is any embedding of circles in $\Sigma$ such that
\begin{enumerate}
    \item the link projection is transverse to $\partial D_i$ for all $i=1,\dots,r-1$,
    \item each point in $P_n$ is on a circle in $L$,
    \item each circle in $L$ intersects $\partial D_p$ in a non-empty subset of $P_n$.
\end{enumerate}
\end{definition}

We denote the circle components of an $n$-cleaved link by $\CIR(L)$. Two $n$-cleaved links are considered equivalent if they are related by an isotopy of $\Sigma$ that pointwise fixes $\partial D_p$. Let $\wh{\CC\LL}_n$ denote the set of equivalence classes of $n$-cleaved links.

We see that $L=(L\cap\overleftarrow{\U})\cup_{P_n}(L\cap\overrightarrow{\U})$ is the result of gluing a crossingless matching of $P_n$ in $\overleftarrow{\U}$ to a crossingless matching in $\overrightarrow{\U}$.

The constituents of $L$ are the crossingless matchings $\overleftarrow{L}=L\cap\overleftarrow{\U}$ and $\overrightarrow{L}=L\cap\overrightarrow{\U}$ that are glued to obtain $L$.

\begin{definition}
A decoration for an $n$-cleaved link $L$ is a map $\sigma:\CIR(L)\to\{+,-\}$.
\end{definition}

Denote by $\CC\LL_n$ the set of decorated $n$-cleaved links, that is
$$\CC\LL_n=\{(L,\sigma)\ \vert\ L\in\wh{\CC\LL}_n,\ \sigma\text{ is a decoration for }L\}.$$ The vertex set of $\Gamma_n$ is the set $\CC\LL_n$.

Given a decorated $n$-cleaved link $(L,\sigma)$, the restrictions of $\sigma$ to $\overleftarrow{L}$ and $\overrightarrow{L}$ assign decorations to each arc. Denote these pairs by $(\overleftarrow{L},\sigma)$ and $(\overrightarrow{L},\sigma)$. The sets of these restrictions is denoted $\overleftarrow{\CC\LL}_n$ and $\overrightarrow{\CC\LL}_n$.

\subsection{The edges of \texorpdfstring{$\Gamma_n$}{Gamma_n}}

The edge set of $\Gamma_n$ is split into a set of left edges and a set of right edges. Each set contains bridge edges and decoration edges. 

For a link $L\in\wh{\CC\LL}_n$, a bridge for $L$ means a bridge for either the crossingless matching $\overleftarrow{L}$ or $\overrightarrow{L}$. In particular, a bridge for $L$ must occur on either the left or right half, they cannot cross $\partial D_p$, and isotopies of bridges must also occur on one side or the other, one cannot slide any part of a bridge across $\partial D_p$. We write the set of equivalence classes of bridges for $L$ in the left and right halves as $\overleftarrow{\Br}(L)$ and $\overrightarrow{\Br}(L)$, respectively (these are not to be confused with $\overrightarrow{\Br}(r)$ for a resolution $r$). The set of equivalence classes of bridges for $L$ is denoted $\BRIDGE(L)=\overleftarrow{\Br}(L)\cup\overrightarrow{\Br}(L)$. Let $L_\gamma$ denote the $n$-cleaved link found by surgering on $\gamma$ in the appropriate consitutuent of $L$.

If $r=(\rho,\overleftarrow{m})$ is a resolution for a tangle $\overrightarrow{T}$ with $\cl(r)=L\in\wh{\CC\LL_n}$ the $n$-cleaved link formed by the resolution diagram, we let $\BRIDGE(r)=\overleftarrow{\Br}(L)\cup\overrightarrow{\Br}(r)$ and $\overleftarrow{\Br}(r)=\overleftarrow{\Br}(L)$. There is a natural map $\cl:\BRIDGE(r)\to\BRIDGE(\cl(r))$ that takes a bridge and sends it to its equivalence class in $\BRIDGE(L)$.

Given any bridge $\gamma$ for $L$, where $\gamma$ is in $\U$, define
\begin{enumerate}
    \item $B_\pitchfork(L,\gamma)$ is the set of classes in $\BRIDGE(L)\setminus\{\gamma\}$ all of whose representatives intersect $\gamma$,
    \item $B_\parallel(L,\gamma)$ is the set of classes in $\BRIDGE(L)\setminus\{\gamma\}$ admitting a representative which does not intersect $\gamma$.
\end{enumerate}

An arc $a$ of a crossingless matching has a tubular neighborhood in $\U$ with two distinct sides. An end of a bridge $\gamma:[0,1]\to\U$ is on one of the two sides of the arc $a$ depending on which side of the tubular neighborhood contains $\gamma((0,\epsilon))$ or $\gamma((1-\epsilon,1))$ for sufficiently small $\epsilon>0$. We define

\begin{enumerate}
    \item $B_d(L,\gamma)$ is the subset of $B_\parallel(L,\gamma)$ admitting a representative neither of whose ends is on an arc with $\gamma$,
    \item $B_s(L,\gamma)$ is the subset of $B_\parallel(L,\gamma)$ admitting a representative with a single end on the same arc as $\gamma$ and lying on the same side of the arc as $\gamma$
    \item $B_o(L,\gamma)$ is the subset of $B_\parallel(L,\gamma)$ admitting a representative with a single end on the same arc as $\gamma$ and lying on the opposite side of the arc as $\gamma$.
\end{enumerate}

\begin{figure}[h]
    \centering
    \begin{subfigure}[b]{0.4\textwidth}
    \centering
    \def\svgwidth{.925\linewidth}
    %% Creator: Inkscape 1.1 (c68e22c387, 2021-05-23), www.inkscape.org
%% PDF/EPS/PS + LaTeX output extension by Johan Engelen, 2010
%% Accompanies image file 'bridge-same.pdf' (pdf, eps, ps)
%%
%% To include the image in your LaTeX document, write
%%   \input{<filename>.pdf_tex}
%%  instead of
%%   \includegraphics{<filename>.pdf}
%% To scale the image, write
%%   \def\svgwidth{<desired width>}
%%   \input{<filename>.pdf_tex}
%%  instead of
%%   \includegraphics[width=<desired width>]{<filename>.pdf}
%%
%% Images with a different path to the parent latex file can
%% be accessed with the `import' package (which may need to be
%% installed) using
%%   \usepackage{import}
%% in the preamble, and then including the image with
%%   \import{<path to file>}{<filename>.pdf_tex}
%% Alternatively, one can specify
%%   \graphicspath{{<path to file>/}}
%% 
%% For more information, please see info/svg-inkscape on CTAN:
%%   http://tug.ctan.org/tex-archive/info/svg-inkscape
%%
\begingroup%
  \makeatletter%
  \providecommand\color[2][]{%
    \errmessage{(Inkscape) Color is used for the text in Inkscape, but the package 'color.sty' is not loaded}%
    \renewcommand\color[2][]{}%
  }%
  \providecommand\transparent[1]{%
    \errmessage{(Inkscape) Transparency is used (non-zero) for the text in Inkscape, but the package 'transparent.sty' is not loaded}%
    \renewcommand\transparent[1]{}%
  }%
  \providecommand\rotatebox[2]{#2}%
  \newcommand*\fsize{\dimexpr\f@size pt\relax}%
  \newcommand*\lineheight[1]{\fontsize{\fsize}{#1\fsize}\selectfont}%
  \ifx\svgwidth\undefined%
    \setlength{\unitlength}{203.50304311bp}%
    \ifx\svgscale\undefined%
      \relax%
    \else%
      \setlength{\unitlength}{\unitlength * \real{\svgscale}}%
    \fi%
  \else%
    \setlength{\unitlength}{\svgwidth}%
  \fi%
  \global\let\svgwidth\undefined%
  \global\let\svgscale\undefined%
  \makeatother%
  \begin{picture}(1,0.68563953)%
    \lineheight{1}%
    \setlength\tabcolsep{0pt}%
    \put(0,0){\includegraphics[width=\unitlength,page=1]{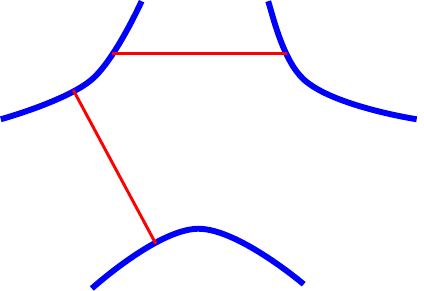}}%
    \put(0.40777857,0.58548279){\makebox(0,0)[lt]{\lineheight{1.25}\smash{\begin{tabular}[t]{l}$\gamma$\end{tabular}}}}%
    \put(0.31887925,0.2718192){\makebox(0,0)[lt]{\lineheight{1.25}\smash{\begin{tabular}[t]{l}$\eta$\end{tabular}}}}%
    \put(0,0){\includegraphics[width=\unitlength,page=2]{bridge-same.pdf}}%
    \put(0.71490975,0.26691178){\makebox(0,0)[lt]{\lineheight{1.25}\smash{\begin{tabular}[t]{l}$\delta$\end{tabular}}}}%
  \end{picture}%
\endgroup%

    \caption{The bridges $\gamma$ and $\eta$ satisfy $\eta\in B_s(L,\gamma)$. The third bridge $\delta$ is obtained by sliding an end of $\gamma$ over $\eta$. After surgery on $\gamma$, the images of $\eta$ and $\delta$ become isotopic.}
    \label{fig:bridge-same}
\end{subfigure}
\hfill
    \begin{subfigure}[b]{0.4\textwidth}
    \centering
    \def\svgwidth{.925\linewidth}
    %% Creator: Inkscape 1.1 (c68e22c387, 2021-05-23), www.inkscape.org
%% PDF/EPS/PS + LaTeX output extension by Johan Engelen, 2010
%% Accompanies image file 'bridge-opposite.pdf' (pdf, eps, ps)
%%
%% To include the image in your LaTeX document, write
%%   \input{<filename>.pdf_tex}
%%  instead of
%%   \includegraphics{<filename>.pdf}
%% To scale the image, write
%%   \def\svgwidth{<desired width>}
%%   \input{<filename>.pdf_tex}
%%  instead of
%%   \includegraphics[width=<desired width>]{<filename>.pdf}
%%
%% Images with a different path to the parent latex file can
%% be accessed with the `import' package (which may need to be
%% installed) using
%%   \usepackage{import}
%% in the preamble, and then including the image with
%%   \import{<path to file>}{<filename>.pdf_tex}
%% Alternatively, one can specify
%%   \graphicspath{{<path to file>/}}
%% 
%% For more information, please see info/svg-inkscape on CTAN:
%%   http://tug.ctan.org/tex-archive/info/svg-inkscape
%%
\begingroup%
  \makeatletter%
  \providecommand\color[2][]{%
    \errmessage{(Inkscape) Color is used for the text in Inkscape, but the package 'color.sty' is not loaded}%
    \renewcommand\color[2][]{}%
  }%
  \providecommand\transparent[1]{%
    \errmessage{(Inkscape) Transparency is used (non-zero) for the text in Inkscape, but the package 'transparent.sty' is not loaded}%
    \renewcommand\transparent[1]{}%
  }%
  \providecommand\rotatebox[2]{#2}%
  \newcommand*\fsize{\dimexpr\f@size pt\relax}%
  \newcommand*\lineheight[1]{\fontsize{\fsize}{#1\fsize}\selectfont}%
  \ifx\svgwidth\undefined%
    \setlength{\unitlength}{221.7477441bp}%
    \ifx\svgscale\undefined%
      \relax%
    \else%
      \setlength{\unitlength}{\unitlength * \real{\svgscale}}%
    \fi%
  \else%
    \setlength{\unitlength}{\svgwidth}%
  \fi%
  \global\let\svgwidth\undefined%
  \global\let\svgscale\undefined%
  \makeatother%
  \begin{picture}(1,0.62995547)%
    \lineheight{1}%
    \setlength\tabcolsep{0pt}%
    \put(0,0){\includegraphics[width=\unitlength,page=1]{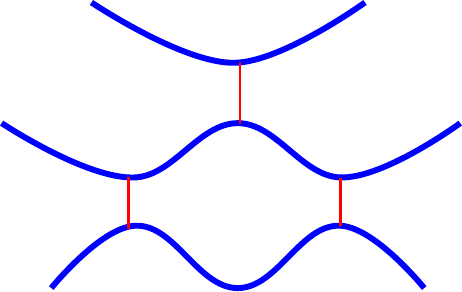}}%
    \put(0.55716811,0.38429631){\makebox(0,0)[lt]{\lineheight{1.25}\smash{\begin{tabular}[t]{l}$\gamma$\end{tabular}}}}%
    \put(0.32318851,0.1518559){\makebox(0,0)[lt]{\lineheight{1.25}\smash{\begin{tabular}[t]{l}$\eta$\end{tabular}}}}%
    \put(0.77729366,0.14877733){\makebox(0,0)[lt]{\lineheight{1.25}\smash{\begin{tabular}[t]{l}$\eta$\end{tabular}}}}%
  \end{picture}%
\endgroup%

    \caption{The bridges $\gamma$ and $\eta$ satisfy $\eta\in B_o(L,\gamma)$. After surgery on $\gamma$, the images of the two isotopic bridges labeled $\eta$ become distinct bridges.}
    \label{fig:bridge-opposite}
\end{subfigure}
    \caption{Bridges that have nondisjoint support.}
    \label{fig:bridge-nondisjoint}
\end{figure}

It is clear that $\BRIDGE(L)\setminus\{\gamma\}$ is the disjoint union of $B_\pitchfork(L,\gamma)$ and $B_\parallel(L,\gamma)$, and $B_\parallel(L,\gamma)$ is the disjoint union of $B_d(L,\gamma)$, $B_s(L,\gamma)$, and $B_o(L,\gamma)$. If $\eta\in B_*(L,\gamma)$ where $*$ is any of the five subscripts as above then $\gamma\in B_*(L,\eta)$. 

The following proposition and its proof are the same as \cite[Proposition 13]{roberts-type-d}.

\begin{proposition}\cite[Proposition 13]{roberts-type-d}\label{prop:same-other-identification}
Surgery on $\gamma$ induces an identification of $B_d(L,\gamma)$ with $B_d(L,\gamma)$ and a $2:1$ map $B_s(L,\gamma)\to B_o(L_\gamma,\gamma^\dagger)$. Dually, there is a $2:1$ map $B_s(L_\gamma,\gamma^\dagger)\to B_o(L,\gamma)$. 
\end{proposition}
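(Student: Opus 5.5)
The plan is to follow Roberts' argument for \cite[Proposition 13]{roberts-type-d}. It is purely local near $\gamma$ and the arc(s) it abuts, so it should not see the topology of $\U$ beyond the support of the bridges involved. We read the first assertion as an identification $B_d(L,\gamma)\to B_d(L_\gamma,\gamma^\dagger)$.

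Fix a representative of $\gamma$ and a thin regular neighborhood $N$ of $\gamma$ together with the arcs $a,b$ of $m$ containing its feet. Surgery changes $m$ only inside a small disk $N_\gamma$ around $\gamma$. By definition every class in $B_\parallel(L,\gamma)$ has a representative disjoint from $\gamma$, and after a small isotopy we can take it disjoint from $N_\gamma$ as well. Such a representative survives surgery unchanged, which gives a map $B_\parallel(L,\gamma)\to \BRIDGE(L_\gamma)$. Conversely, a bridge for $L_\gamma$ disjoint from $\gamma^\dagger$ can be pushed off $N_\gamma$ and read as a bridge for $L$ disjoint from $\gamma$. This is the identification obtained by performing the dual surgery on $\gamma^\dagger$, using $(L_\gamma)_{\gamma^\dagger}=L$.

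Next we track where the feet go.

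\emph{The $B_d$ case.} If $\eta\in B_d(L,\gamma)$, its feet lie away from $a$ and $b$, so they land on arcs of $L_\gamma$ not touched by $\gamma^\dagger$. Its image therefore lies in $B_d(L_\gamma,\gamma^\dagger)$, and the inverse construction shows this is a bijection. The one point to check is well-definedness on isotopy classes. Any isotopy between two representatives disjoint from $N_\gamma$ can be made to avoid $N_\gamma$, because the feet never need to cross the feet of $\gamma$ (the feet are not on $a$ or $b$). The same argument run in reverse handles the inverse.

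\emph{The $B_s$ case.} Let $\eta\in B_s(L,\gamma)$, with one foot on $a$ on the same side as $\gamma$. After surgery, the portion of $a$ on that side is cut and reconnected to $b$. In $L_\gamma$ the foot of $\eta$ is then on an arc that $\gamma^\dagger$ abuts, on the opposite side, so $\overline\eta\in B_o(L_\gamma,\gamma^\dagger)$.

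For the $2:1$ claim, let $\delta$ be obtained from $\eta$ by sliding its foot along $a$ past the foot of $\gamma$, that is, handle-sliding an end over $\gamma$ as in Figure~\ref{fig:bridge-same}. Then $\delta\neq\eta$ in $\BRIDGE(L)$, since the foot cannot pass $\gamma$ while staying disjoint from it. After surgery, however, the obstruction disappears and $\overline\delta\simeq\overline\eta$. Surjectivity follows by applying the inverse construction to a bridge in $B_o(L_\gamma,\gamma^\dagger)$: its foot can be pushed to either side of $\gamma$ on the original arc, yielding exactly these two preimages, as in Figure~\ref{fig:bridge-opposite} read backwards. The dual statement follows by symmetry, exchanging $(L,\gamma)$ with $(L_\gamma,\gamma^\dagger)$.

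The main obstacle is making sure that in $\U$, a surface with genus and crosscaps, and in the presence of 1-1 bifurcations, the fiber has exactly two elements. The worry is that extra isotopies, for example a foot travelling all the way around an arc, or the two feet of $\gamma$ lying on the same arc, could identify $\eta$ with $\delta$ or create further preimages. I would handle this by noting that arcs of a matching are properly embedded intervals ending on $\partial D_p$, never closed circles, so sliding a foot along an arc is confined to a linearly ordered interval. The only obstruction to sliding is then the foot of $\gamma$ (or of $\gamma^\dagger$), exactly as in Roberts' planar case. Nonorientability only affects which side of the arc is which, and since the sides are defined locally along the arc, that notion is unaffected.
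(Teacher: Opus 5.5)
Your strategy is the paper's: it imports Roberts' surgery correspondence and uses the hexagon bridge $\delta$ as the second preimage. However, the $2:1$ step, as you argue it, is wrong. Equivalence in $\BRIDGE(L)$ only requires bridges to avoid $m$, not $\gamma$. So sliding the foot of $\eta$ along $a$, past the foot of $\gamma$ or not, produces a bridge \emph{equivalent} to $\eta$, and ``the foot cannot pass $\gamma$ while staying disjoint from it'' proves nothing about $\BRIDGE(L)$. The correct $\delta$ is the handle slide of Figure~\ref{fig:bridge-same}: a pushoff of $\gamma$, then a segment parallel to $a$, then a pushoff of $\eta$. Its foot lies on $b$, the \emph{other} arc abutted by $\gamma$. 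It differs from $\eta$ because an equivalence fixes $\partial\U$ pointwise, hence maps each arc of $m$ to itself, so feet never change arcs. Likewise, the two preimages of $\zeta\in B_o(L_\gamma,\gamma^\dagger)$ are not obtained by pushing its foot to ``either side of $\gamma$ on the original arc''. Instead, slide the foot along the new arc (the half of $a$ containing the foot of $\eta$, a pushoff of $\gamma$, and a half of $b$) to either side of the foot of $\gamma^\dagger$, which puts it on $a$ or on $b$. A foot on the other half of $a$ lies on the other new arc after surgery, so it belongs to a different fiber.

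Your closing paragraph also addresses the wrong obstacle. Bridges have feet on distinct arcs by definition, and 1-1 bifurcations play no role in this statement. The topology of $\U$ enters through well-definedness in both directions: two representatives of one class that are both disjoint from $\gamma$ must be equivalent through bridges disjoint from $\gamma$ (dually for $\gamma^\dagger$). Your reason, that the feet never meet the feet of $\gamma$, does not control the interior of the bridge, which may sweep across $\gamma$ during an isotopy. In Roberts' planar setting this is automatic: every complementary region of a matching in a disk is a disk, so a class is determined by the two arc-sides its feet lie on. In $\U$, regions can carry genus, crosscaps, or several boundary circles, so the argument is not purely local. One way to close the gap is to lift to the universal cover of the region of $\U\setminus m$ containing $\gamma$. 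Equivalent representatives have lifts with feet on the same lifted arc-sides. Every lift of $\gamma$ has its feet on lifts of $a$ and $b$, so no lift of $\gamma$ separates the two lifts; in the $B_s$ case, disjointness from $\gamma$ also forces both feet onto the same half of $a$. Hence the two representatives are homotopic, and therefore isotopic, in the complement of $\gamma$. The paper also leaves this step to its citation of Roberts.
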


The $2:1$ maps in the proposition are as follows. In the case where $\eta\in B_s(L,\gamma)$, there is a third bridge $\delta$ obtained by sliding an end of $\gamma$ over $\eta$. Specifically, as shown in Figure~\ref{fig:bridge-same}, $\delta$ is constructed by taking the concatenation of a pushoff of $\gamma$ on the same side as $\eta$, a segment parallel to the arc of the matching between the foot of $\gamma$ and the foot of $\eta$, and a pushoff of $\eta$ on the same side as $\gamma$. The bridges $\gamma,\eta$, and $\delta$ together with the segments of the arcs connecting their feet bound an embedded hexagon in $\U$. After surgery on $\gamma$, the images of $\eta$ and $\delta$ belong in $B_o(L_\gamma,\gamma^\dagger)$ and are isotopic.

When $\eta\in B_o(L,\gamma)$, take a pushoff of $\eta$ on the same side as $\gamma$. One of the ends of the pushoff is on the same arc as an end of $\gamma$. Slide that end of the pushoff along the arc until it has gone past the end of $\gamma$. This creates a bridge $\eta'$ that is isotopic to $\eta$. After surgery on $\gamma$, the images of $\eta$ and $\eta'$ will be non-isotopic and both in $B_s(L_\gamma,\gamma^\dagger)$.

For any bridge $\gamma\in\BRIDGE(L)$, we call the set of circles in $L$ and $L_\gamma$ that are abutted by the feet of $\gamma$ and $\gamma^\dagger$ the support of $\gamma$.

\begin{definition}
Let $(L,\sigma)\in\CC\LL_n$ and $\gamma\in\BRIDGE(L)$. A bridge edge $\gamma:(L,\sigma)\to(L',\sigma')$ of $\Gamma_n$ exists when  $L'=L_\gamma$, $\sigma'=\sigma_\gamma$ satisfies $\sigma_\gamma(C)=\sigma(C)$ for all $C\in\CIR(L)$ not in the support of $\gamma$, and for circles in the support of $\gamma$ the decorations follow the rules of \cite{aps-i-bundles} Table 2.1, that is, they match one of the following cases (note that our convention for $+$ and $-$ decorations is opposite that of \cite{aps-i-bundles}):
\begin{enumerate}
    \item when $\gamma$ merges two circles $C_a$ and $C_b$ to get a circle $C_\gamma$, $\sigma$ and $\sigma_\gamma$ satisfy one of the following
    \begin{align}
    \begin{split}
        \sigma(C_a)=+,\ \sigma(C_b)=+, &\hspace{10mm} \sigma_\gamma(C_\gamma)=+, \\
        \sigma(C_a)=-,\ \sigma(C_b)=+, &\hspace{10mm} \sigma_\gamma(C_\gamma)=-, \\
        \sigma(C_a)=+,\ \sigma(C_b)=-, &\hspace{10mm} \sigma_\gamma(C_\gamma)=-.
    \end{split}
    \end{align}
    \item when $\gamma$ divides a circle $C$ into two circles $C_a$ and $C_b$, $\sigma$ and $\sigma_\gamma$ satisfy one of the following
    \begin{align}
    \begin{split}
        \sigma(C)=+, &\hspace{10mm} \sigma_\gamma(C_a)=+,\ \sigma_\gamma(C_b)=-, \\
        \sigma(C)=+, &\hspace{10mm} \sigma_\gamma(C_a)=-,\ \sigma_\gamma(C_b)=+, \\
        \sigma(C)=-, &\hspace{10mm} \sigma_\gamma(C_a)=-,\ \sigma_\gamma(C_b)=-.
    \end{split}
    \end{align}
    
\end{enumerate}
\end{definition}

There are no edges for 1-1 bifurcations. 

\begin{remark}
We chose to have zero differential corresponding to 1-1 bifurcations, but other choices are possible. They must satisfy the condition of being a dyad, as defined in \cite{chen-rp3}, that is, a choice of a map $f:V\to V$, where $V$ is the 2-dimensional $\F_2$-vector space $V=\langle v_+,v_-\rangle$, satisfying $f\circ f=0$.
\end{remark}

\begin{definition}
For each circle $C\in\CIR(L)$ with $\sigma(C)=+$, there are two distinct decoration edges $(L,\sigma)\to(L,\sigma_C)$, where $\sigma_C$ is the decoration on $L$ with $\sigma_C(C)=-$ and $\sigma_C(C')=\sigma(C')$ for all $C'\in\CIR(L)\setminus\{C\}$.
\end{definition}

The circle whose decoration changes along a decoration edge will be called the support of that edge.

We partition the bridge edges into those coming from $\overleftarrow{L}$ and those from $\overrightarrow{L}$. We also assign one of the two decoration edges for each circle $C\in\CIR(L)$ with $\sigma(C)=+$ to $\overleftarrow{L}$ and one to $\overrightarrow{L}$. Let $\overleftarrow{\Gamma}_n$ be the subgraph of $\Gamma_n$ with the same vertices but whose edges correspond to $\overleftarrow{L}$ and similarly for $\overrightarrow{\Gamma}_n$. The location of an edge is the subgraph $\overleftarrow{\Gamma}_n$ or $\overrightarrow{\Gamma}_n$ that contains the edge.

\begin{definition}
$\QQ\Gamma_n$ is the quiver category induced from the graph $\Gamma_n$.
\end{definition}

We use $I_{(L,\sigma)}$ to denote the idempotent of $\QQ\Gamma_n$ corresponding to the vertex $(L,\sigma)$ of $\Gamma_n$. The ring of idempotents of $\QQ\Gamma_n$ is
$$\II=\II_n=\{I_{(L,\sigma)}\ :\ (L,\sigma)\in\CC\LL_n\}.$$

\begin{proposition}
$\QQ\Gamma_n$ is finite dimensional.
\end{proposition}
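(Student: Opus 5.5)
The plan is to follow Roberts' argument for $\BB\Gamma_n$: bound the length of every path in $\Gamma_n$ by a grading, and then count paths. Since $\QQ\Gamma_n$ has a basis consisting of the (composable) paths in $\Gamma_n$, including the length-zero paths $I_{(L,\sigma)}$, it suffices to show two things: the lengths of paths are uniformly bounded, and there are only finitely many paths of each bounded length, or at least between any two given vertices.

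\textbf{Step 1 (a strictly decreasing grading).} For $(L,\sigma)\in\CC\LL_n$ set
$$q(L,\sigma)=\#\{C\in\CIR(L):\sigma(C)=+\}-\#\{C\in\CIR(L):\sigma(C)=-\}.$$
I will check case by case, using the tables in the definition of bridge edges, that every edge of $\Gamma_n$ lowers $q$.
\begin{itemize}
\item A merge $++\to+$ lowers $q$ from $2$ to $1$ on the support.
\item A merge $+-\to-$ lowers it from $0$ to $-1$.
\item A split $+\to+-$ lowers it from $1$ to $0$.
\item A split $-\to--$ lowers it from $-1$ to $-2$.
\item A decoration edge $+\to-$ lowers it by $2$.
\end{itemize}
Since there are no edges for 1-1 bifurcations, every edge lowers $q$ by at least $1$. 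Each circle of an $n$-cleaved link meets $\partial D_p$ in a nonempty subset of the $2n$ points of $P_n$, in fact in at least two of them. Hence $|\CIR(L)|\le n$ and $-n\le q\le n$. Therefore every path in $\Gamma_n$ has length at most $2n$, and in particular $\Gamma_n$ has no oriented cycles. This is what rules out, for example, the composite $\gamma$ followed by $\gamma^\dagger$ returning to the same vertex.

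\textbf{Step 2 (finiteness of vertices and edges).} It then remains to see that only finitely many paths of length at most $2n$ exist, or at least between two fixed idempotents. For the vertices, I would argue that an $n$-cleaved link is determined by its two constituent matchings $\overleftarrow{L}$ and $\overrightarrow{L}$ together with a decoration. Decorations contribute a factor of at most $2^n$. For the edges, each vertex has at most $2n$ decoration edges, and the bridge edges out of $(L,\sigma)$ are indexed by $\BRIDGE(L)$ together with the finitely many admissible decorations $\sigma_\gamma$. So the count reduces to finiteness of $\MATCH(n)$ in each half $\U$ and of $\BRIDGE(L)$.

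\textbf{Step 3 (main obstacle).} I expect Step 2 to be the main difficulty. When some $F_i$ has nontrivial topology, for example a torus, arcs in $\U$ with endpoints on $\partial D_p$ can a priori wind arbitrarily, producing infinitely many isotopy classes of matchings and bridges. My first attempt would be to prove the weaker but sufficient statement that each morphism space $I_{(L,\sigma)}\QQ\Gamma_n I_{(L',\sigma')}$ is finite dimensional. The idea is this: a path from $L$ to $L'$ has length at most $2n$, and each surgery is determined by an embedded arc disjoint from the matching. Then $L'$ is obtained from $L$ by at most $2n$ band surgeries, and, given $L$ and $L'$, the cores of these bands are constrained up to isotopy to finitely many choices, for instance by a complexity count of intersections with a fixed arc system cutting each $F_i^\circ$ into disks. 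If genuine finite dimensionality of the whole algebra is intended, I would instead look for a restriction in the paper's conventions, such as the surfaces being built from the specified pieces, that makes $\MATCH(n)$ finite, and invoke it at this point.
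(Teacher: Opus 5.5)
Your Step 1 is exactly the paper's entire proof. The paper defines the same $\iota$ (your $q$), notes that bridge edges lower it by $1$ and decoration edges by $2$, and concludes that $\Gamma_n$ has no directed cycles. Your explicit bound of $2n$ on path lengths is a mild sharpening. The paper never addresses your Step 2. It tacitly carries over Roberts' setting, where both halves are disks, $\MATCH(n)$ is finite and every $\BRIDGE(L)$ is finite, so an acyclic finite quiver has finitely many paths.

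Your suspicion in Step 3 is correct, and the gap cannot be closed. Suppose $\overleftarrow{\U}$ is a punctured torus, for instance $M_1=T^2\times I$ and $p=1$. Already for $n=1$ there are infinitely many isotopy classes, rel $\partial D_p$, of essential arcs from $p_1$ to $p_2$; arcs of different slopes are distinct. This gives infinitely many vertices of $\Gamma_n$, hence infinitely many linearly independent idempotents $I_{(L,\sigma)}$, so $\QQ\Gamma_n$ is infinite dimensional. Nothing in the paper's conventions excludes this:
\begin{itemize}
\item torus bundles appear in the paper's own handleslide counterexample;
\item for $\#^3\R P^3$ cut at $p=1$, the right half is a punctured Klein bottle, and iterating the Dehn twist along its two-sided nonseparating curve again produces infinitely many arcs.
\end{itemize}

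Your fallback does not rescue the proposition either. Finite dimensionality of each $I_{(L,\sigma)}\QQ\Gamma_n I_{(L',\sigma')}$ would not give finite dimensionality of $\QQ\Gamma_n$ when there are infinitely many idempotents. Moreover, it is false in the path algebra. Take $n=2$ and let $L$ have two circles $C_1,C_2$ whose left arcs $a\subset C_1$ and $b\subset C_2$ are boundary-parallel in the punctured torus. Bridges from $a$ to $b$ winding around the handle give infinitely many classes $\gamma\in\BRIDGE(L)$, each a merge. Each yields a there-and-back path $e_{(\gamma,\sigma,\sigma')}e_{(\gamma^\dagger,\sigma',\sigma_{C_2})}$ from $(L,\sigma)$ with $\sigma\equiv +$ to $(L,\sigma_{C_2})$. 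These are distinct paths, hence linearly independent. They are also exactly the summands in the defining formula for $d_{\BB\Gamma_n}(\overleftarrow{e}_{C_2})$, which therefore has infinitely many terms.

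So what your Step 1, and the paper's proof, actually establishes is that $\Gamma_n$ is acyclic with all paths of length at most $2n$. The proposition follows only under the extra hypothesis that $\MATCH(n)$ and every $\BRIDGE(L)$ are finite in both halves. That holds when both halves are disks, as in Roberts' setting, and plausibly when they are M\"obius bands. It fails in the generality the paper states.
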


\begin{proof}
The proof is similar to the proof of \cite[Proposition 18]{roberts-type-d}. Define
$$\iota(L,\sigma)=\#\{C\in\CIR(L):\sigma(C)=+\}-\#\{C\in\CIR(L):\sigma(C)=-\}.$$ We have defined bridge edges to exist only if they decrease $\iota$ by $1$, and a decoration edge for a circle $C$ with $\sigma(C)=+$ decreases $\iota$ by $2$. Therefore, the value of $\iota$ strictly decreases along a directed path in $\Gamma$, so there cannot be any directed cycles.
\end{proof}

For a bridge edge $\gamma:(L,\sigma)\to(L_\gamma,\sigma')$ as above, we use $e_{(\gamma,\sigma,\sigma')}$ to denote the algebra element it corresponds to in $\QQ\Gamma_n$. When we do not need to specify the decorations $\sigma$ and $\sigma'$, we sometimes write $e_\gamma$. If $\gamma\in\overleftarrow{\Br}(L)$ we write $\overleftarrow{e}_{(\gamma,\sigma,\sigma')}$, and likewise for $\overrightarrow{\Br}(L)$. 

For a decoration edge $(L,\sigma)\to (L,\sigma_C)$, we write $\overleftarrow{e}_C$ for the decoration edge that has been assigned to $\overleftarrow{L}$ and $\overrightarrow{e}_C$ for the edge that has been assigned to $\overrightarrow{L}$.

\begin{definition}
The bigrading on $\QQ\Gamma_n$ is the bigrading of paths in $\Gamma_n$ induced by homomorphically extending the following bigrading of the edges:
\begin{align*}
    I_{(L,\sigma)}&\longrightarrow(0,0), \\
    \overrightarrow{e}_C&\longrightarrow(0,-1), \\
    \overleftarrow{e}_C&\longrightarrow(1,1), \\
    \overrightarrow{e}_\gamma&\longrightarrow(0,-1/2), \\
    \overleftarrow{e}_\gamma&\longrightarrow(1,1/2).
\end{align*}
\end{definition}

The first entry of the bigrading on the path $\alpha$ will be denoted by $\overleftarrow{l}(\alpha)$ while the second entry will be denoted $q(\alpha)$. Elements corresponding to $\overrightarrow{\U}$ will act as even elements for the $\Z/2\Z$-grading from $\overleftarrow{l}(\alpha)$, while elements in $\overleftarrow{\U}$ will be odd elements.

\subsection{The cleaver category}

We now describe relations we impose on $\QQ\Gamma_n$, and the pre-additive category we obtain by quotienting by these relations will be called the cleaver category $\BB\Gamma_n$. As each of these relations will be homogeneous for the bigrading, $\BB\Gamma_n$ will also be bigraded.

Most of the relations can be characterized by the following: suppose $(L,\sigma)$ and $(L',\sigma')$ are decorated $n$-cleaved links. Then we impose the relation that any length $2$ path from $(L,\sigma)$ to $(L',\sigma')$ involving only right edges are equal. We also impose the relation that the sum over all length $2$ paths from $(L,\sigma)$ to $(L',\sigma')$ involving only left edges is equal to $0$. The situations involving length $1$ paths and mixtures of left and right edges are slightly more nuanced, but follow the same general pattern.

Many of the relations come from commuting squares in $\Gamma_n$ of the form in Figure~\ref{fig:commuting-square}, where $e_\alpha$ and $e_\beta$ can be either bridge or decoration edges, and $e_{\alpha'}$ and $e_{\beta'}$ are in the same locations as $e_\alpha$ and $e_\beta$, respectively. For each we will impose the relation 
$$e_\alpha e_{\beta'} = e_\beta e_{\alpha'}.$$

\begin{figure}[h]
\centering
\begin{tikzcd}
{(L,\sigma)} \arrow[rr, "e_\alpha"] \arrow[d, "e_\beta"] &  & {(L_\alpha,\sigma_\alpha)} \arrow[d, "e_{\beta'}"] \\
{(L_\beta,\sigma_\beta)} \arrow[rr, "e_{\alpha'}"]       &  & {(L_2,\sigma_2)}                                  
\end{tikzcd}
\caption{A commuting square}
\label{fig:commuting-square}
\end{figure}

\begin{enumerate}
    \item \textbf{The relations for disjoint support:} Suppose $(L,\sigma)\xrightarrow{e_\alpha}(L_\alpha,\sigma_\alpha)$ and $(L,\sigma)\xrightarrow{e_\beta}(L_\beta,\sigma_\beta)$ are edges with the same source and disjoint supports. Whether $e_\alpha$ and $e_\beta$ are bridge edges or decoration edges, applying the change to the cleaved link from $e_\alpha$ commutes with applying the change from $e_\beta$, so there is a square as in Figure~\ref{fig:commuting-square}.

    More precisely, if $e_\alpha=e_C$ and $e_\beta=e_D$ for distinct cleaved circles $C,D$ with $\sigma(C)=\sigma(D)=+$, then changing the decoration on $C$ commutes with changing the decoration on $D$. Let $e_{\alpha'}=e_C$ and $e_{\beta'}=e_D$ in the same locations as $e_\alpha$ and $e_\beta$; we obtain the relation
    \begin{equation}\label{relation-eCeC}
        e_Ce_D = e_De_C.
    \end{equation}

    If $e_\alpha=e_{(\gamma,\sigma,\sigma_\gamma)}$ and $e_\beta=e_C$ for $C\in\CIR(L)$ with $\sigma(C)=+$ and $\gamma\in\BRIDGE(L)$ whose support does not include $C$. Then $\sigma_\gamma(C)=+$ as well, so there is a decoration edge $e_C:(L_\gamma,\sigma_\gamma)\to(L_\gamma,\sigma_{\gamma,C})$ with the same location as $e_\beta$, where $\sigma_{\gamma,C}=\sigma_{C,\gamma}$ is equal to $\sigma_\gamma$ for all $C'\in\CIR(L_\gamma)\setminus\{C\}$ and $\sigma_{\gamma,C}(C)=-$. Call this edge $e_{\beta'}$. On the other hand, there is a bridge edge $e_{\alpha'}=e_{(\gamma,\sigma_C,\sigma_{C,\gamma})}:(L,\sigma_C)\to(L_\gamma,\sigma_{C,\gamma})$. We obtain a square and the relation
    \begin{equation}\label{relation-eCbridge}
        e_\gamma e_C = e_C e_\gamma.
    \end{equation}

    If $e_\alpha=e_\gamma$ and $e_\beta=e_\eta$ for distinct equivalence classes of bridges $\gamma,\eta$ for $L$ with disjoint supports, then $\eta\in\BRIDGE(L_\gamma)$ and $\gamma\in\BRIDGE(L_\eta)$. Furthermore, $L_{\gamma,\eta}=L_{\eta,\gamma}$. Since the supports are disjoint, there is an edge $e_{\beta'}:(L_\gamma,\sigma_\gamma)\to(L_{\gamma,\eta},\sigma_{\gamma,\eta})$ where $\sigma_{\gamma,\eta}$ assigns the same decorations to circles in the support of $\gamma$ as $\sigma_\gamma$ and to circles in the support of $\eta$ as $\sigma_\eta$. The same argument shows there is an edge $e_{\alpha'}:(L_\eta,\sigma_\eta)\to(L_{\eta,\gamma},\sigma_{\eta,\gamma})$, and $\sigma_{\gamma,\eta}=\sigma_{\eta,\gamma}$. There is a square and we obtain the relation 
    \begin{equation}\label{relation-bridgebridge}
        e_\gamma e_\eta=e_\eta e_\gamma.
    \end{equation}
    
    \item \textbf{The relations for $\overrightarrow{e}_C$:} Suppose first that $\gamma$ merges $C_1$ and $C_2$ to get $C\in\CIR(L_\gamma)$, and $\sigma(C_1)=\sigma(C_2)=+$. The following squares exist in $\Gamma_n$ for $i=1,2$:

\begin{figure}[h]
\centering
\begin{tikzcd}
{(L,\sigma)} \arrow[rr, "e_{\gamma}"] \arrow[d, "e_{C_i}"] &  & {(L_\gamma,\sigma_{\gamma})} \arrow[d, "e_{C}"] \\
{(L,\sigma_{C_i})} \arrow[rr, "e_{\gamma}"]       &  & {(L_{\gamma},\sigma_{\gamma,C})}                                  
\end{tikzcd}
\caption{Square for a merge $\gamma$ and $e_C$}
\label{fig:square-dec-bridge-merge}
\end{figure}

    Both of these squares provide relations, giving us
    
    \begin{equation}\label{relation-righteCmerge}
    \overrightarrow{e}_{C_1}e_{(\gamma,\sigma_{C_1},\sigma_{\gamma,C})} = \overrightarrow{e}_{C_2}e_{(\gamma,\sigma_{C_2},\sigma_{\gamma,C})}=e_{(\gamma,\sigma,\sigma_\gamma)}\overrightarrow{e}_C.
    \end{equation}

    If $\sigma(C_i)=-$ for $i=1$ or $2$, then no such squares exist in $\Gamma_n$.

    Similarly, if $\gamma$ divides circle $C$ into $C_1$ and $C_2$ in $\CIR(L_\gamma)$, and $\sigma(C)=+$, then there are squares

\begin{figure}[h]
\centering
\begin{tikzcd}
{(L,\sigma)} \arrow[rr, "e_{\gamma}"] \arrow[d, "e_{C}"] &  & {(L_\gamma,\sigma_{\gamma})} \arrow[d, "e_{C_i}"] \\
{(L,\sigma_{C})} \arrow[rr, "e_{C,\gamma}"]       &  & {(L_{\gamma},\sigma_{C_i})}                                  
\end{tikzcd}
\caption{Square for a division $\gamma$ and $e_C$}
\label{fig:square-dec-bridge-divide}
\end{figure}

    and the relation
    \begin{equation}\label{relation-righteCdivide}
        \overrightarrow{e}_Ce_{(\gamma,\sigma_C,\sigma_{C,\gamma})}=e_{(\gamma,\sigma,\sigma_\gamma^1)}\overrightarrow{e}_{C_1}=e_{(\gamma,\sigma,\sigma_\gamma^2)}\overrightarrow{e}_{C_2},
    \end{equation}
    where $\sigma_\gamma^i$ assigns $+$ to $C_i$ and $-$ to $C_{i+1}$ (indices counted mod $2$). There are no such relations if $\sigma(C)=-$.

    \item \textbf{The relations for $\overleftarrow{e}_C$:} The squares found for $\overrightarrow{e}_C$ still exist for $\overleftarrow{e}_C$. However, the relations we define are different. When $\gamma$ merges $C_1$ and $C_2$ with $\sigma(C_1)=\sigma(C_2)=+$ to get $C$, we impose the relation
    \begin{equation}\label{relation-lefteCmerge}
        \overleftarrow{e}_{C_1}e_{(\gamma,\sigma_{C_1},\sigma_{\gamma,C})} + \overleftarrow{e}_{C_2}e_{(\gamma,\sigma_{C_2},\sigma_{\gamma,C})} + e_{(\gamma,\sigma,\sigma_\gamma)}\overleftarrow{e}_C=0.
    \end{equation}
    When $\gamma$ divides $C$ with $\sigma(C)=+$ into $C_1$ and $C_2$, we impose the relation
    \begin{equation}\label{relation-lefteCdivide}
        \overleftarrow{e}_Ce_{(\gamma,\sigma_C,\sigma_{C,\gamma})} + e_{(\gamma,\sigma,\sigma_\gamma^1)}\overleftarrow{e}_{C_1} + e_{(\gamma,\sigma,\sigma_\gamma^2)}\overleftarrow{e}_{C_2}=0.
    \end{equation}

    \begin{figure}[h]
        \centering
        \begin{subfigure}[b]{0.45\textwidth}
    \centering
    \def\svgwidth{.925\linewidth}
    %% Creator: Inkscape 1.1 (c68e22c387, 2021-05-23), www.inkscape.org
%% PDF/EPS/PS + LaTeX output extension by Johan Engelen, 2010
%% Accompanies image file 'merge-decoration.pdf' (pdf, eps, ps)
%%
%% To include the image in your LaTeX document, write
%%   \input{<filename>.pdf_tex}
%%  instead of
%%   \includegraphics{<filename>.pdf}
%% To scale the image, write
%%   \def\svgwidth{<desired width>}
%%   \input{<filename>.pdf_tex}
%%  instead of
%%   \includegraphics[width=<desired width>]{<filename>.pdf}
%%
%% Images with a different path to the parent latex file can
%% be accessed with the `import' package (which may need to be
%% installed) using
%%   \usepackage{import}
%% in the preamble, and then including the image with
%%   \import{<path to file>}{<filename>.pdf_tex}
%% Alternatively, one can specify
%%   \graphicspath{{<path to file>/}}
%% 
%% For more information, please see info/svg-inkscape on CTAN:
%%   http://tug.ctan.org/tex-archive/info/svg-inkscape
%%
\begingroup%
  \makeatletter%
  \providecommand\color[2][]{%
    \errmessage{(Inkscape) Color is used for the text in Inkscape, but the package 'color.sty' is not loaded}%
    \renewcommand\color[2][]{}%
  }%
  \providecommand\transparent[1]{%
    \errmessage{(Inkscape) Transparency is used (non-zero) for the text in Inkscape, but the package 'transparent.sty' is not loaded}%
    \renewcommand\transparent[1]{}%
  }%
  \providecommand\rotatebox[2]{#2}%
  \newcommand*\fsize{\dimexpr\f@size pt\relax}%
  \newcommand*\lineheight[1]{\fontsize{\fsize}{#1\fsize}\selectfont}%
  \ifx\svgwidth\undefined%
    \setlength{\unitlength}{475.45160249bp}%
    \ifx\svgscale\undefined%
      \relax%
    \else%
      \setlength{\unitlength}{\unitlength * \real{\svgscale}}%
    \fi%
  \else%
    \setlength{\unitlength}{\svgwidth}%
  \fi%
  \global\let\svgwidth\undefined%
  \global\let\svgscale\undefined%
  \makeatother%
  \begin{picture}(1,0.18656845)%
    \lineheight{1}%
    \setlength\tabcolsep{0pt}%
    \put(0,0){\includegraphics[width=\unitlength,page=1]{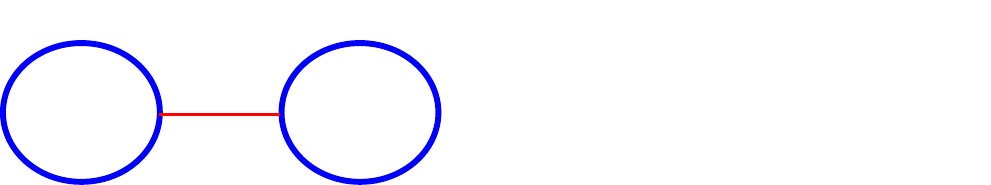}}%
    \put(0.02734867,0.16390902){\makebox(0,0)[lt]{\lineheight{1.25}\smash{\begin{tabular}[t]{l}$C_1$\end{tabular}}}}%
    \put(0.32179112,0.16187841){\makebox(0,0)[lt]{\lineheight{1.25}\smash{\begin{tabular}[t]{l}$C_2$\end{tabular}}}}%
    \put(0.1857384,0.08674481){\makebox(0,0)[lt]{\lineheight{1.25}\smash{\begin{tabular}[t]{l}$\gamma$\end{tabular}}}}%
    \put(0.04257846,0.03800953){\makebox(0,0)[lt]{\lineheight{1.25}\smash{\begin{tabular}[t]{l}$+$\end{tabular}}}}%
    \put(0.34615878,0.026841){\makebox(0,0)[lt]{\lineheight{1.25}\smash{\begin{tabular}[t]{l}$+$\end{tabular}}}}%
    \put(0,0){\includegraphics[width=\unitlength,page=2]{merge-decoration.pdf}}%
    \put(0.72994923,0.14157201){\makebox(0,0)[lt]{\lineheight{1.25}\smash{\begin{tabular}[t]{l}$C$\end{tabular}}}}%
    \put(0.74416367,0.02277969){\makebox(0,0)[lt]{\lineheight{1.25}\smash{\begin{tabular}[t]{l}$+$\end{tabular}}}}%
    \put(0,0){\includegraphics[width=\unitlength,page=3]{merge-decoration.pdf}}%
  \end{picture}%
\endgroup%

    \caption{A bridge that merges two circles with $+$ decoration.}
    \label{fig:merge-decoration}
\end{subfigure}
\hfill
    \begin{subfigure}[b]{0.45\textwidth}
    \centering
    \def\svgwidth{.925\linewidth}
    %% Creator: Inkscape 1.1 (c68e22c387, 2021-05-23), www.inkscape.org
%% PDF/EPS/PS + LaTeX output extension by Johan Engelen, 2010
%% Accompanies image file 'divide-decoration.pdf' (pdf, eps, ps)
%%
%% To include the image in your LaTeX document, write
%%   \input{<filename>.pdf_tex}
%%  instead of
%%   \includegraphics{<filename>.pdf}
%% To scale the image, write
%%   \def\svgwidth{<desired width>}
%%   \input{<filename>.pdf_tex}
%%  instead of
%%   \includegraphics[width=<desired width>]{<filename>.pdf}
%%
%% Images with a different path to the parent latex file can
%% be accessed with the `import' package (which may need to be
%% installed) using
%%   \usepackage{import}
%% in the preamble, and then including the image with
%%   \import{<path to file>}{<filename>.pdf_tex}
%% Alternatively, one can specify
%%   \graphicspath{{<path to file>/}}
%% 
%% For more information, please see info/svg-inkscape on CTAN:
%%   http://tug.ctan.org/tex-archive/info/svg-inkscape
%%
\begingroup%
  \makeatletter%
  \providecommand\color[2][]{%
    \errmessage{(Inkscape) Color is used for the text in Inkscape, but the package 'color.sty' is not loaded}%
    \renewcommand\color[2][]{}%
  }%
  \providecommand\transparent[1]{%
    \errmessage{(Inkscape) Transparency is used (non-zero) for the text in Inkscape, but the package 'transparent.sty' is not loaded}%
    \renewcommand\transparent[1]{}%
  }%
  \providecommand\rotatebox[2]{#2}%
  \newcommand*\fsize{\dimexpr\f@size pt\relax}%
  \newcommand*\lineheight[1]{\fontsize{\fsize}{#1\fsize}\selectfont}%
  \ifx\svgwidth\undefined%
    \setlength{\unitlength}{379.12650521bp}%
    \ifx\svgscale\undefined%
      \relax%
    \else%
      \setlength{\unitlength}{\unitlength * \real{\svgscale}}%
    \fi%
  \else%
    \setlength{\unitlength}{\svgwidth}%
  \fi%
  \global\let\svgwidth\undefined%
  \global\let\svgscale\undefined%
  \makeatother%
  \begin{picture}(1,0.19577147)%
    \lineheight{1}%
    \setlength\tabcolsep{0pt}%
    \put(0,0){\includegraphics[width=\unitlength,page=1]{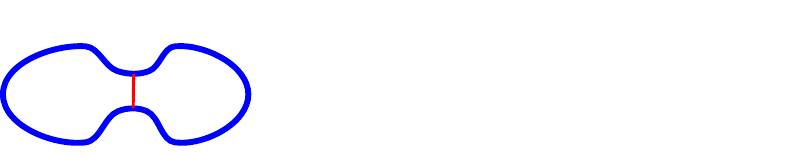}}%
    \put(0.02665745,0.13713067){\makebox(0,0)[lt]{\lineheight{1.25}\smash{\begin{tabular}[t]{l}$C$\end{tabular}}}}%
    \put(0.04957654,0.04036126){\makebox(0,0)[lt]{\lineheight{1.25}\smash{\begin{tabular}[t]{l}$+$\end{tabular}}}}%
    \put(0.18136121,0.06964661){\makebox(0,0)[lt]{\lineheight{1.25}\smash{\begin{tabular}[t]{l}$\gamma$\end{tabular}}}}%
    \put(0,0){\includegraphics[width=\unitlength,page=2]{divide-decoration.pdf}}%
    \put(0.59245055,0.16608169){\makebox(0,0)[lt]{\lineheight{1.25}\smash{\begin{tabular}[t]{l}$C_1$\end{tabular}}}}%
    \put(0.85365414,0.16735489){\makebox(0,0)[lt]{\lineheight{1.25}\smash{\begin{tabular}[t]{l}$C_2$\end{tabular}}}}%
    \put(0,0){\includegraphics[width=\unitlength,page=3]{divide-decoration.pdf}}%
  \end{picture}%
\endgroup%

    \caption{A bridge that divides a circle with $+$ decoration into two circles.}
    \label{fig:divide-decoration}
\end{subfigure}
        \caption{}
        \label{fig:merge-divide-decoration}
    \end{figure}

    \item \textbf{Bridge relations from squares:} Suppose $\gamma\in\BRIDGE(L)$ and $\eta\in B_{\parallel}(L,\gamma)$, then $\overline\eta\in\BRIDGE(L_\gamma)$, $\overline\gamma\in\BRIDGE(L_\eta)$, and $L_{\gamma,\eta}=L_{\eta,\gamma}$. There are squares in $\Gamma_n$ as in Figure~\ref{fig:square-bridges} as long as there are decorations for which the edges in the square exist.

\begin{figure}[h]
\centering
\begin{tikzcd}
{(L,\sigma)} \arrow[rr, "e_{(\gamma,\sigma_{00},\sigma_{01})}"] \arrow[d, "e_{(\eta,\sigma_{00},\sigma_{10})}"] &  & {(L_\gamma,\sigma_{01})} \arrow[d, "e_{(\overline\eta,\sigma_{01},\sigma_{11})}"] \\
{(L_\eta,\sigma_{10})} \arrow[rr, "e_{(\overline\gamma,\sigma_{10},\sigma_{11})}"]       &  & {(L_{\gamma,\eta},\sigma_{11})}                                  
\end{tikzcd}
\caption{Square for bridges}
\label{fig:square-bridges}
\end{figure}

The following proposition is a consequence of the more general Theorem~\ref{thm:existence-squares} that we will prove later, but we put it here to set up notation.

    \begin{proposition}\cite[Proposition 20]{roberts-type-d} 
    Let $(r,s)$ be a state of some tangle $T$. If $\gamma$ and $\eta$ are bridges such that the following path exists
    $$(L,\sigma)\xrightarrow{e_{(\gamma,\sigma,\sigma')}}(L_\gamma,\sigma')\xrightarrow{e_{(\overline\eta,\sigma',\sigma'')}}(L_{\gamma,\eta},\sigma''),$$
    and $\eta\in\Br(r)$ is not a 1-1 bifurcation, then there is at least one decoration $\overline\sigma$ on $L_\eta$ for which we can find edges
    $$(L,\sigma)\xrightarrow{e_{(\eta,\sigma,\overline\sigma)}}(L_\eta,\overline\sigma)\xrightarrow{e_{(\overline\gamma,\overline\sigma,\sigma'')}}(L_{\gamma,\eta},\sigma'').$$
    \end{proposition}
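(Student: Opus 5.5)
We argue by a case analysis on the surgery types of $\gamma$, $\overline\eta$ and $\eta$, using that edges exist only for merges and divisions, never for 1-1 bifurcations. The hypothesis that the path exists forces $\gamma$ (in $L$) and $\overline\eta$ (in $L_\gamma$) to be merges or divisions. By assumption, $\eta$ is a merge or a division in $L$. It remains to show that $\overline\gamma$ in $L_\eta$ is not a 1-1 bifurcation; then we exhibit $\overline\sigma$ explicitly.

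\textbf{Step 1: $\overline\gamma$ is not a 1-1 bifurcation.} Suppose it were. Then the pair $(\gamma,\eta)$ has $\gamma,\eta$ both non-bifurcations with the image of $\gamma$ in $r_\eta$ a 1-1 bifurcation, which is case IV) with the roles $\alpha=\gamma$, $\beta=\eta$. Lemma~\ref{lem:bif-case-iv} then says the image of $\eta$ in $r_\gamma$, namely $\overline\eta$, is also a 1-1 bifurcation. This contradicts the existence of the edge $e_{(\overline\eta,\sigma',\sigma'')}$. So all four surgeries in the square are merges or divisions.

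\textbf{Step 2: the decoration.} When the supports of $\gamma$ and $\eta$ are disjoint, take $\overline\sigma$ to agree with $\sigma''$ on the support of $\eta$ and with $\sigma$ elsewhere. The edges then exist, as in the disjoint-support relation. When the supports overlap, the square is a local cobordism on at most three circles. The sequences of types are (merge, merge), (merge, divide), (divide, merge), (divide, divide), with the type of $\eta$ and $\overline\gamma$ determined by Euler characteristic and circle count. Since no 1-1 bifurcations occur, the composite is exactly one of Roberts' planar/annular local configurations, now embedded in $\Sigma$. The decoration rules of \cite{aps-i-bundles} Table 2.1 are the Frobenius multiplication/comultiplication of $V$. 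For each case we check that whenever $\sigma''$ appears with nonzero coefficient in the composite $\gamma$ then $\overline\eta$ starting from $\sigma$, it also appears in the composite $\eta$ then $\overline\gamma$, and we read off the intermediate $\overline\sigma$. For example, in the merge-merge case of three circles we give the merged circle the sign product. In the divide-divide case we give the two middle circles the decorations inherited from $\sigma''$ on the appropriate pieces.

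\textbf{Main obstacle.} The main obstacle is the overlapping-support case where the circles are nontrivial or one-sided in $\Sigma$. For example, $\gamma$ divides $C$ and $\overline\eta$ re-merges the two pieces. Here we must ensure that the commuting-square argument does not secretly pass through a configuration where $\eta$ alone would be a bifurcation. That is excluded by hypothesis, and Step 1 handles $\overline\gamma$. Since the decorations only depend on the combinatorial type (merge/divide) and not on embedding, the remaining verification reduces to the algebraic identity $m\circ\Delta$, $\Delta\circ m$, and associativity/coassociativity in $V$. We would check these signwise using the $+$/$-$ tables.
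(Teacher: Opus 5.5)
Your argument is correct and follows essentially the paper's route (the paper derives this statement from Theorem~\ref{thm:existence-squares}): first rule out 1-1 bifurcations among the four surgeries, which you do cleanly via Lemma~\ref{lem:bif-case-iv} (the parity of the net change in the number of circles along the two paths would also work), and then check decorations in the five merge/fission configurations, which your Frobenius-algebra identities encode. Two small repairs: compare numbers of paths over $\Z$ rather than coefficients in $\Z/2\Z$, since in the split-then-remerge case $m\circ\Delta(+)=2(-)$ vanishes mod $2$ even though paths exist; and note that circle count does not determine the types of $\eta,\overline\gamma$ (a merge--divide path can be matched by either merge--divide or divide--merge), although every resulting subcase satisfies the needed identity.
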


    For these squares, we impose the relation
    \begin{equation}\label{relation-bridgesquare}
        e_{(\gamma,\sigma,\sigma')}e_{(\overline\eta,\sigma',\sigma'')}=e_{(\eta,\sigma,\overline\sigma)}e_{(\overline\gamma,\overline\sigma,\sigma'')}
    \end{equation}
    whenever
    \begin{itemize}
        \item either of the arcs $\gamma$ or $\eta$ is in $\overrightarrow{\U}$, or
        \item the arcs $\gamma$ and $\eta$ are both in $\overleftarrow{\U}$ with $\gamma\in B_o(L,\eta)$.
    \end{itemize}
\end{enumerate}

\begin{figure}[h]
    \centering
    \def\svgwidth{.925\linewidth}
    %% Creator: Inkscape 1.1 (c68e22c387, 2021-05-23), www.inkscape.org
%% PDF/EPS/PS + LaTeX output extension by Johan Engelen, 2010
%% Accompanies image file 'bridge-square.pdf' (pdf, eps, ps)
%%
%% To include the image in your LaTeX document, write
%%   \input{<filename>.pdf_tex}
%%  instead of
%%   \includegraphics{<filename>.pdf}
%% To scale the image, write
%%   \def\svgwidth{<desired width>}
%%   \input{<filename>.pdf_tex}
%%  instead of
%%   \includegraphics[width=<desired width>]{<filename>.pdf}
%%
%% Images with a different path to the parent latex file can
%% be accessed with the `import' package (which may need to be
%% installed) using
%%   \usepackage{import}
%% in the preamble, and then including the image with
%%   \import{<path to file>}{<filename>.pdf_tex}
%% Alternatively, one can specify
%%   \graphicspath{{<path to file>/}}
%% 
%% For more information, please see info/svg-inkscape on CTAN:
%%   http://tug.ctan.org/tex-archive/info/svg-inkscape
%%
\begingroup%
  \makeatletter%
  \providecommand\color[2][]{%
    \errmessage{(Inkscape) Color is used for the text in Inkscape, but the package 'color.sty' is not loaded}%
    \renewcommand\color[2][]{}%
  }%
  \providecommand\transparent[1]{%
    \errmessage{(Inkscape) Transparency is used (non-zero) for the text in Inkscape, but the package 'transparent.sty' is not loaded}%
    \renewcommand\transparent[1]{}%
  }%
  \providecommand\rotatebox[2]{#2}%
  \newcommand*\fsize{\dimexpr\f@size pt\relax}%
  \newcommand*\lineheight[1]{\fontsize{\fsize}{#1\fsize}\selectfont}%
  \ifx\svgwidth\undefined%
    \setlength{\unitlength}{486.29372424bp}%
    \ifx\svgscale\undefined%
      \relax%
    \else%
      \setlength{\unitlength}{\unitlength * \real{\svgscale}}%
    \fi%
  \else%
    \setlength{\unitlength}{\svgwidth}%
  \fi%
  \global\let\svgwidth\undefined%
  \global\let\svgscale\undefined%
  \makeatother%
  \begin{picture}(1,0.40396447)%
    \lineheight{1}%
    \setlength\tabcolsep{0pt}%
    \put(0,0){\includegraphics[width=\unitlength,page=1]{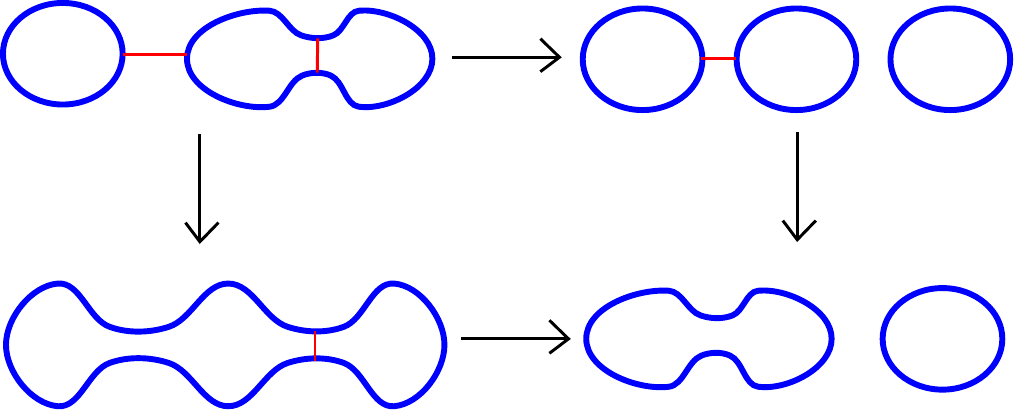}}%
    \put(0.1299778,0.37524018){\makebox(0,0)[lt]{\lineheight{1.25}\smash{\begin{tabular}[t]{l}$\gamma$\end{tabular}}}}%
    \put(0.33049952,0.34347438){\makebox(0,0)[lt]{\lineheight{1.25}\smash{\begin{tabular}[t]{l}$\eta$\end{tabular}}}}%
    \put(0.70275524,0.36729875){\makebox(0,0)[lt]{\lineheight{1.25}\smash{\begin{tabular}[t]{l}$\overline\gamma$\end{tabular}}}}%
    \put(0.33149223,0.05758198){\makebox(0,0)[lt]{\lineheight{1.25}\smash{\begin{tabular}[t]{l}$\overline\eta$\end{tabular}}}}%
  \end{picture}%
\endgroup%

    \caption{A square involving two bridges $\gamma$ and $\eta$.}
    \label{fig:bridge-square}
\end{figure}

There are also squares in the case where $\alpha$ and $\beta$ are left bridges and $\alpha\in B_s(L,\beta)$. However, these occur in triples: for pairs chosen from $\alpha$, $\beta$, and $\gamma$, the result of sliding the end of $\alpha$ over $\beta$.

In $L_\alpha$, $\overline\beta$ and $\overline\gamma$ represent the same bridge. Similarly, the images of $\alpha$ and $\gamma$ in $L_\beta$ are the same, and the images of $\alpha$ and $\beta$ in $L_\gamma$ are the same. Then we impose the relation
\begin{equation}\label{relation-bridge-same}
    \overleftarrow{e}_\alpha\overleftarrow{e}_{\overline\beta}+\overleftarrow{e}_\beta\overleftarrow{e}_{\overline\gamma}+\overleftarrow{e}_\gamma\overleftarrow{e}_{\overline\alpha}=0
\end{equation}

whenever there are compatible decorations on $L_\alpha$, $L_\beta$, and $L_\gamma$. Whenever one of the terms does not exist, we obtain commutativity for the other two terms.

\begin{figure}[h]
\centering
\def\svgwidth{.925\linewidth}
%% Creator: Inkscape 1.1 (c68e22c387, 2021-05-23), www.inkscape.org
%% PDF/EPS/PS + LaTeX output extension by Johan Engelen, 2010
%% Accompanies image file 'bridge-same-surgery.pdf' (pdf, eps, ps)
%%
%% To include the image in your LaTeX document, write
%%   \input{<filename>.pdf_tex}
%%  instead of
%%   \includegraphics{<filename>.pdf}
%% To scale the image, write
%%   \def\svgwidth{<desired width>}
%%   \input{<filename>.pdf_tex}
%%  instead of
%%   \includegraphics[width=<desired width>]{<filename>.pdf}
%%
%% Images with a different path to the parent latex file can
%% be accessed with the `import' package (which may need to be
%% installed) using
%%   \usepackage{import}
%% in the preamble, and then including the image with
%%   \import{<path to file>}{<filename>.pdf_tex}
%% Alternatively, one can specify
%%   \graphicspath{{<path to file>/}}
%% 
%% For more information, please see info/svg-inkscape on CTAN:
%%   http://tug.ctan.org/tex-archive/info/svg-inkscape
%%
\begingroup%
  \makeatletter%
  \providecommand\color[2][]{%
    \errmessage{(Inkscape) Color is used for the text in Inkscape, but the package 'color.sty' is not loaded}%
    \renewcommand\color[2][]{}%
  }%
  \providecommand\transparent[1]{%
    \errmessage{(Inkscape) Transparency is used (non-zero) for the text in Inkscape, but the package 'transparent.sty' is not loaded}%
    \renewcommand\transparent[1]{}%
  }%
  \providecommand\rotatebox[2]{#2}%
  \newcommand*\fsize{\dimexpr\f@size pt\relax}%
  \newcommand*\lineheight[1]{\fontsize{\fsize}{#1\fsize}\selectfont}%
  \ifx\svgwidth\undefined%
    \setlength{\unitlength}{605.94093623bp}%
    \ifx\svgscale\undefined%
      \relax%
    \else%
      \setlength{\unitlength}{\unitlength * \real{\svgscale}}%
    \fi%
  \else%
    \setlength{\unitlength}{\svgwidth}%
  \fi%
  \global\let\svgwidth\undefined%
  \global\let\svgscale\undefined%
  \makeatother%
  \begin{picture}(1,0.23608262)%
    \lineheight{1}%
    \setlength\tabcolsep{0pt}%
    \put(0,0){\includegraphics[width=\unitlength,page=1]{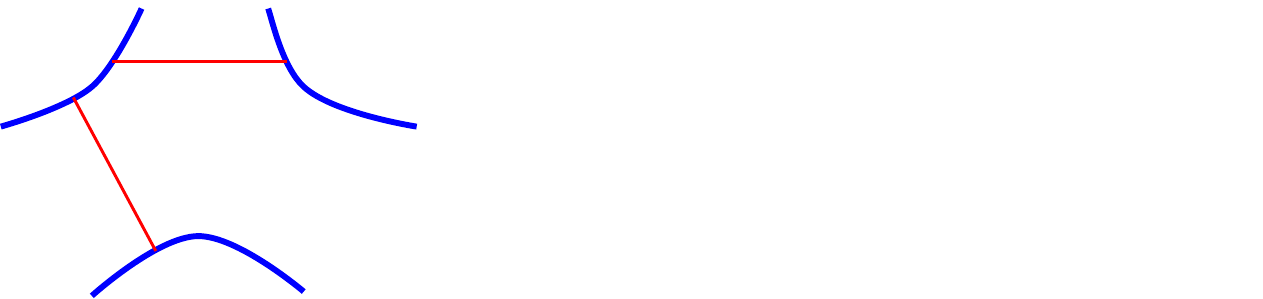}}%
    \put(0.13695095,0.19663227){\makebox(0,0)[lt]{\lineheight{1.25}\smash{\begin{tabular}[t]{l}$\alpha$\end{tabular}}}}%
    \put(0.10709444,0.0912895){\makebox(0,0)[lt]{\lineheight{1.25}\smash{\begin{tabular}[t]{l}$\beta$\end{tabular}}}}%
    \put(0,0){\includegraphics[width=\unitlength,page=2]{bridge-same-surgery.pdf}}%
    \put(0.2400998,0.08964136){\makebox(0,0)[lt]{\lineheight{1.25}\smash{\begin{tabular}[t]{l}$\gamma$\end{tabular}}}}%
    \put(0,0){\includegraphics[width=\unitlength,page=3]{bridge-same-surgery.pdf}}%
    \put(0.69157879,0.14618217){\makebox(0,0)[lt]{\lineheight{1.25}\smash{\begin{tabular}[t]{l}$\alpha^\dagger$\end{tabular}}}}%
    \put(0.60595254,0.06111923){\makebox(0,0)[lt]{\lineheight{1.25}\smash{\begin{tabular}[t]{l}$\overline\beta$\end{tabular}}}}%
    \put(0.77213528,0.05999261){\makebox(0,0)[lt]{\lineheight{1.25}\smash{\begin{tabular}[t]{l}$\overline\gamma$\end{tabular}}}}%
    \put(0,0){\includegraphics[width=\unitlength,page=4]{bridge-same-surgery.pdf}}%
  \end{picture}%
\endgroup%

\caption{On the left, bridges $\alpha$ and $\beta$ with $\beta\in B_s(L,\alpha)$. $\gamma$ is the result of sliding the end of $\alpha$ over $\beta$. On the right, after surgery on $\alpha$, $\beta$ and $\gamma$ map to the same bridge.}
\label{fig:bridge-same-surgery}
\end{figure}

Suppose $\gamma\in\overleftarrow{\Br}(L)$ and $\eta\in B_\pitchfork(L_\gamma,\gamma^\dagger)$. Then $\eta\in\overleftarrow{\Br}(L_\gamma)$ as well. For any path
$$(L,\sigma)\xrightarrow{e_{(\gamma,\sigma,\sigma')}}(L_\gamma,\sigma')\xrightarrow{e_{(\eta,\sigma'\sigma'')}}(L_{\gamma,\eta},\sigma'')$$
we impose the relation
\begin{equation}\label{relation-bridge-intersect}
    \overleftarrow{e}_{(\gamma,\sigma,\sigma')}\overleftarrow{e}_{(\eta,\sigma',\sigma'')}=0.
\end{equation}

The result of surgering along $\gamma$ followed by $\gamma^\dagger$ gives back $L$, but with a different decoration. The following proposition and its proof are the same as in \cite{roberts-type-d}.

\begin{proposition}\cite[Proposition 21]{roberts-type-d}
If there is a path $\rho$
$$(L,\sigma)\xrightarrow{e_{(\gamma,\sigma,\sigma')}}(L_\gamma,\sigma')\xrightarrow{e_{(\gamma^\dagger,\sigma'\sigma'')}}(L,\sigma''),$$
then there is a circle $C\in\CIR(L)$ in the support of $\gamma$ where $\sigma(C)=+$ and $\sigma''=\sigma_C$.
\end{proposition}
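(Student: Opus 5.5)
The plan is a case analysis on the type of surgery $\gamma$ performs. Since the path consists of two bridge edges and there are no edges for 1-1 bifurcations, $\gamma$ is either a merge or a division. The first step is to identify $\gamma^\dagger$ on $L_\gamma$. Surgery on $\gamma^\dagger$ undoes surgery on $\gamma$ and returns the underlying cleaved link $L$. The circles abutted by $\gamma^\dagger$ are exactly the circles produced by $\gamma$, and the circles produced by $\gamma^\dagger$ are exactly those abutted by $\gamma$. So the support of $\gamma^\dagger$ in $L_\gamma$ coincides with the support of $\gamma$, and circles outside the support keep their decorations along both edges. It therefore suffices to track decorations on the support, using the edge rules of \cite{aps-i-bundles} Table 2.1.

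\textbf{Case 1: $\gamma$ merges $C_a,C_b$ into $C_\gamma$.} Then $\gamma^\dagger$ divides $C_\gamma$ back into $C_a,C_b$.
\begin{itemize}
\item If $\sigma(C_a)=\sigma(C_b)=+$, then $\sigma'(C_\gamma)=+$. The division rule forces $\sigma''$ to assign $+$ to one of $C_a,C_b$ and $-$ to the other, so $\sigma''=\sigma_{C_a}$ or $\sigma''=\sigma_{C_b}$. In either case the chosen circle had $\sigma=+$.
\item If exactly one of $\sigma(C_a),\sigma(C_b)$ is $-$, say $\sigma(C_b)=-$, then $\sigma'(C_\gamma)=-$. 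The division rule then forces both $C_a$ and $C_b$ to receive $-$, so $\sigma''=\sigma_{C_a}$ with $\sigma(C_a)=+$.
\item Both decorations $-$ is impossible, since no merge edge exists from that decoration.
\end{itemize}

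\textbf{Case 2: $\gamma$ divides $C$ into $C_a,C_b$.} Then $\gamma^\dagger$ merges $C_a,C_b$ back into $C$.
\begin{itemize}
\item If $\sigma(C)=-$, then both $C_a,C_b$ get $-$, and no merge edge exists out of $(-,-)$. This contradicts the existence of the second edge.
\item So $\sigma(C)=+$, and $\sigma'$ assigns $\{+,-\}$ to $C_a,C_b$ in some order. The merge rule then gives $\sigma''(C)=-$, so $\sigma''=\sigma_C$ with $\sigma(C)=+$.
\end{itemize}

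The only step needing care is the identification of the support of $\gamma^\dagger$ with that of $\gamma$, together with the fact that $\gamma^\dagger$ inverts the merge/divide type. This is needed in the current setting, where 1-1 bifurcations exist: we must rule out that $\gamma^\dagger$ is a 1-1 bifurcation when $\gamma$ is not. That follows because the cocore of a merge on two distinct circles has its feet on the single merged circle, and surgery on it splits that circle back into the original two. This is a local statement in a neighborhood of $\gamma\cup C_a\cup C_b$ and is independent of the ambient surface $\Sigma$, so Roberts' argument carries over verbatim.
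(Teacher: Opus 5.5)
Your proposal is correct and is essentially the argument the paper defers to (Roberts' Proposition 21). That argument is a case check of the merge/divide decoration rules along $\gamma$ and then $\gamma^\dagger$. Your added observation is the one point specific to this setting. The existence of the edge $e_\gamma$ rules out $\gamma$ being a 1-1 bifurcation, so $\gamma^\dagger$ is a division when $\gamma$ is a merge and a merge when $\gamma$ is a division. This is what lets Roberts' proof carry over unchanged.
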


For each path $\rho$ as above, the circle $C$ is called the active circle for $\rho$. When $\gamma\in\overrightarrow{\Br}(L)$, we impose the relation
\begin{equation}\label{relation-thereandback}
    \overrightarrow{e}_{(\gamma,\sigma,\sigma')}\overrightarrow{e}_{(\gamma^\dagger,\sigma',\sigma_C)}=\overrightarrow{e}_C.
\end{equation}

Lastly, suppose $C\in\CIR(L)$ is as in Figure~\ref{fig:1-1-bifurcation-relation} such that $\sigma(C)=+$, $\gamma$ and $\eta$ are bridges both of which have both feet on $C$, $\eta$ is a 1-1 bifurcation of $C$ to $C''$ and $\overline\gamma\in\BRIDGE(L_\eta)$ is a 1-1 bifurcation of $C''$ to $C'$, surgery on $\gamma$ splits $C$ into two circles $C_1$ and $C_2$, and $\overline\eta\in\BRIDGE(L_\gamma)$ merges $C_1$ and $C_2$ to get $C'$. We call this the special 1-1 bifurcation configuration. There are two paths
$$(L,\sigma)\xrightarrow{e_{(\gamma,\sigma,\sigma_i)}}(L_\gamma,\sigma_i)\xrightarrow{e_{(\overline\eta,\sigma_i\sigma')}}(L_{\gamma,\eta},\sigma')$$ for $i=1,2$ with $\sigma_i(C_i)=+$, $\sigma_i(C_{i+1})=-$ (again with indices counted mod $2$), and $\sigma'$ satisfies $\sigma'(D)=\sigma(D)$ for all $D$ not in the support of $\overline\eta$ and $\sigma'(C')=-$. We impose the relation
\begin{equation}\label{relation-1-1-bifurcation}
    e_{(\gamma,\sigma,\sigma_1)}e_{(\overline\eta,\sigma_1,\sigma')}+e_{(\gamma,\sigma,\sigma_2)}e_{(\overline\eta,\sigma_2,\sigma')}=0.
\end{equation}

\begin{figure}[h]
\centering
\def\svgwidth{.925\linewidth}
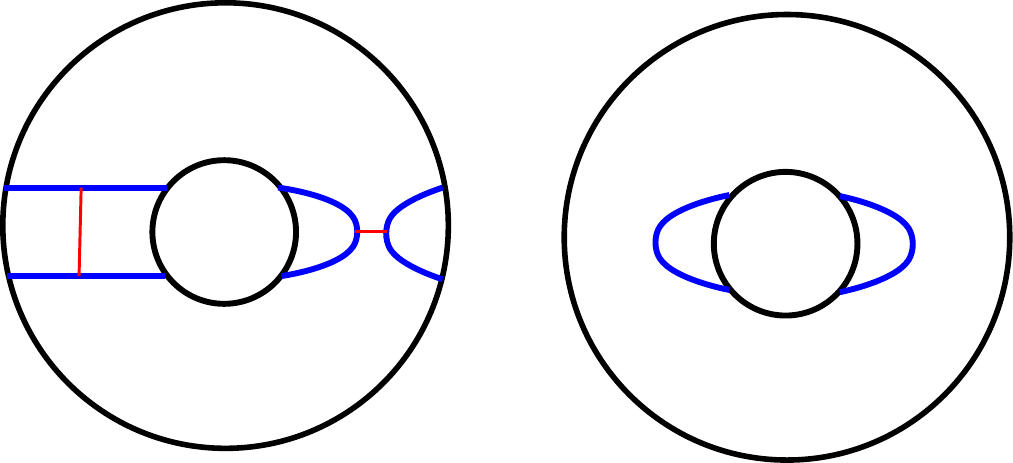
\caption{The two bridges in the 1-1 bifurcation relation}
\label{fig:1-1-bifurcation-relation}
\end{figure}

\begin{proposition}\cite[Proposition 23]{roberts-type-d}\label{prop:relation-homogeneous}
The relations defined in equations \ref{relation-eCeC}, \ref{relation-eCbridge}, \ref{relation-bridgebridge}, ~\ref{relation-righteCmerge}, ~\ref{relation-righteCdivide}, ~\ref{relation-lefteCmerge}, ~\ref{relation-lefteCdivide}, ~\ref{relation-bridgesquare}, ~\ref{relation-bridge-same}, ~\ref{relation-bridge-intersect}, ~\ref{relation-thereandback}, and ~\ref{relation-1-1-bifurcation} are homogeneous for the bigrading on $\QQ\Gamma_n$.
\end{proposition}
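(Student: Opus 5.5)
The plan is to check each relation term by term. Each relation is an equality, or a vanishing sum, of paths in $\Gamma_n$. Since the bigrading is defined on edges and extended additively along paths, a relation is homogeneous exactly when every path appearing in it has the same bigrading. Relations of the form ``path $=0$'' (equation~\ref{relation-bridge-intersect}) are automatically homogeneous. So the work reduces to comparing, within each relation, the multiset of edge types: left or right, and bridge or decoration.

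First I would record the rule: a path's bigrading is
\[
(\#\text{left edges},\ \tfrac12\#\text{left bridges}-\tfrac12\#\text{right bridges}+\#\text{left decorations}-\#\text{right decorations}).
\]
Then I would go through the relations in order.

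\begin{itemize}
\item For the disjoint-support relations \ref{relation-eCeC}, \ref{relation-eCbridge}, \ref{relation-bridgebridge}, both sides use the same two edge types. This holds because, by construction, $e_{\alpha'}$ and $e_{\beta'}$ lie in the same locations as $e_\alpha$ and $e_\beta$, and are of the same kind.
\item The relations \ref{relation-righteCmerge} and \ref{relation-righteCdivide} are handled the same way. Each term is one bridge edge $e_\gamma$ (in a fixed location) and one right decoration edge, so each term has bigrading $\mathrm{gr}(e_\gamma)+(0,-1)$. The left analogues \ref{relation-lefteCmerge} and \ref{relation-lefteCdivide} give $\mathrm{gr}(e_\gamma)+(1,1)$ for each of the three terms.
\item For the bridge-square relations \ref{relation-bridgesquare}, the key point is that $\overline\gamma$ and $\overline\eta$ lie in the same halves of $\Sigma$ as $\gamma$ and $\eta$. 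This is because surgery and taking images happen inside the same half, and bridges never cross $\partial D_p$. Hence both sides consist of one edge from each of the same two locations.
\item For relation \ref{relation-bridge-same}, all three terms are composites of two left bridge edges, each of bigrading $(2,1)$.
\item For relation \ref{relation-1-1-bifurcation}, the two terms are both $e_\gamma e_{\overline\eta}$ with only the intermediate decorations differing, so the locations agree.
\end{itemize}

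The one relation that is not a comparison of like terms is \ref{relation-thereandback}. The left side is two right bridge edges, with bigrading $2\cdot(0,-1/2)=(0,-1)$, and the right side is a right decoration edge, also of bigrading $(0,-1)$. Here I must note that $\gamma^\dagger$ is a right bridge whenever $\gamma$ is, since the cocore lives in the same half.

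The main (mild) obstacle is bookkeeping: making sure each edge's location is preserved under taking images $\overline\eta$ and cocores $\gamma^\dagger$. I would dispatch this once at the start by observing that all surgeries are supported in a single half and leave the other half unchanged. I would also note that 1-1 bifurcations introduce no edges, so they impose no new grading constraints.
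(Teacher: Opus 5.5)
Your proposal is correct and follows the same term-by-term comparison of edge types that the paper gets by deferring to Roberts' proof of Proposition~23; the only relation whose two sides use different edge types is \ref{relation-thereandback}, where both sides have bigrading $(0,-1)$. You also handle relation~\ref{relation-1-1-bifurcation}, which is new to this paper, correctly: both of its terms use the same two bridge edges in the same locations.
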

\begin{proof}
The proof is the same as in \cite{roberts-type-d}.
\end{proof}

\begin{definition}
The cleaver algebra of $n$-cleaved links $\BB\Gamma_n$ is the bigraded algebra obtained from $\QQ\Gamma_n$ by quotienting by all the above relations.
\end{definition}

\subsection{A differential on \texorpdfstring{$\BB\Gamma_n$}{BGamma_n}}

Surgery along a bridge $\gamma$ followed by $\gamma^\dagger$ gives a relation when $\gamma$ is in $\overrightarrow{\U}$. When they are in $\overleftarrow{\U}$, they correspond to a differential:

\begin{proposition}\cite[Proposition 26]{roberts-type-d}
For each $(L,\sigma)$ and circle $C\in\CIR(L)$ with $\sigma(C)=+$, let $d_{(L,\sigma),(L,\sigma_C)}$ be the $\Z/2\Z$-linear map on $\Hom((L,\sigma),(L,\sigma_C))$ defined by homomorphically extending to all paths the specification
\begin{align}
d_{(L,\sigma),(L,\sigma_C)}(\overleftarrow{e}_C) = \sum_{\gamma\in\overleftarrow{\Br}(L)} \overleftarrow{e}_{(\gamma,\sigma,\sigma')}\overleftarrow{e}_{(\gamma^\dagger,\sigma',\sigma_C)},
\end{align}
where the sum is over all length two paths
$$(L,\sigma)\xrightarrow{e_{(\gamma,\sigma,\sigma')}}(L_\gamma,\sigma')\xrightarrow{e_{(\gamma^\dagger,\sigma'\sigma_C)}}(L,\sigma_C)$$ for all $\gamma\in\overleftarrow{\Br}(L)$. For every other edge $e$ in $\Gamma_n$ and the identity morphisms $I_{(L,\sigma)}$, $d(e)=0$. Then $d$ can be extended using the Leibniz rule
$$d(\alpha\beta)=d(\alpha)\beta+\alpha d(\beta)$$
to a $(1,0)$ differential on the bigraded category $\BB\Gamma_n$.
\end{proposition}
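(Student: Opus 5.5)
We need to show three things: $d$ is well defined on $\BB\Gamma_n$ (it respects every relation), it is a $(1,0)$ map, and $d\circ d=0$. Since $d$ is defined on generators and extended by Leibniz, it is automatically a derivation on $\QQ\Gamma_n$. It descends to the quotient exactly when it sends each defining relation into the two-sided ideal generated by the relations; the Leibniz rule then handles products of relations with arbitrary paths.

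\textbf{Grading.} The term $\overleftarrow{e}_C$ has bigrading $(1,1)$. Each summand $\overleftarrow{e}_\gamma\overleftarrow{e}_{\gamma^\dagger}$ has bigrading $(2,1)$, so $d$ raises $\overleftarrow{l}$ by $1$ and preserves $q$. Because each relation is homogeneous (Proposition~\ref{prop:relation-homogeneous}), the ideal is bigraded, and $d$ is a $(1,0)$ map.

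\textbf{Compatibility with relations.} Only $\overleftarrow{e}_C$ has nonzero differential, so any relation not containing a left decoration edge maps to $0$. This covers relations \ref{relation-bridgebridge}, \ref{relation-righteCmerge}, \ref{relation-righteCdivide}, \ref{relation-bridgesquare}, \ref{relation-bridge-same}, \ref{relation-bridge-intersect}, \ref{relation-thereandback} and \ref{relation-1-1-bifurcation}. The remaining relations are \ref{relation-eCeC} and \ref{relation-eCbridge} with left decoration edges, together with \ref{relation-lefteCmerge} and \ref{relation-lefteCdivide}; I check each in turn.

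For \ref{relation-eCbridge}, $d(\overleftarrow{e}_C)e_\gamma + e_\gamma d(\overleftarrow{e}_C)$ is a sum of length-three paths $\eta\,\eta^\dagger\,\gamma$ and $\gamma\,\eta\,\eta^\dagger$. I sort the bridges $\eta$ by their relation to $\gamma$ using Proposition~\ref{prop:same-other-identification}.
\begin{itemize}
\item If $\eta$ has support disjoint from $\gamma$, the two paths agree by the square relations \ref{relation-bridgebridge} and \ref{relation-bridgesquare}, and they cancel mod $2$.
\item If $\eta\in B_\pitchfork$, the paths vanish by \ref{relation-bridge-intersect}.
\item If $\eta\in B_s$ or $B_o$, the $2:1$ correspondence pairs terms on the two sides; the triple relation \ref{relation-bridge-same} and its degenerate two-term version make the pairs cancel.
\end{itemize}
For \ref{relation-eCeC} the same argument applies, using two circles $C,D$ in place of a circle and a bridge.

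The merge/divide relations \ref{relation-lefteCmerge} and \ref{relation-lefteCdivide} are the delicate case. I would apply $d$ to each and show the sum of the resulting length-three paths lies in the ideal. Roberts' argument for $S^3$ covers this when no bridge is a 1-1 bifurcation. The new case occurs when some $\eta$ with both feet on $C$ is a 1-1 bifurcation: then $\eta\eta^\dagger$ has no edges and contributes nothing. In that case I would invoke Lemmas~\ref{lem:bif-case-iii} and \ref{lem:bif-case-iv} to show that the only extra unpaired terms occur in the special 1-1 bifurcation configuration, where relation \ref{relation-1-1-bifurcation} kills them.

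\textbf{$d^2=0$.} Since $d$ is a derivation in characteristic $2$, $d^2$ is again a derivation. It therefore suffices to check $d^2(\overleftarrow{e}_C)=0$. This holds because $d(\overleftarrow{e}_C)$ is a sum of products of bridge edges, and bridge edges have zero differential.

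The main obstacle is the verification for \ref{relation-lefteCmerge} and \ref{relation-lefteCdivide}, and for \ref{relation-eCbridge} when $\gamma$ is a left bridge: the combinatorics of the $2:1$ pairings must hold in the presence of 1-1 bifurcations. My first attempt would be to enumerate the cases I--IV of 1-1 bifurcations in pairs of bridges and confirm that each either produces no edges or reduces to relation \ref{relation-1-1-bifurcation}.
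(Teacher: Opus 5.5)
Your overall route is the paper's. Only relations containing a left decoration edge matter. The disjoint-support relations are checked by commuting there-and-back paths past the other edge. The three-term relations \ref{relation-lefteCmerge} and \ref{relation-lefteCdivide} are checked by sorting bridges as in Roberts. The unpaired terms created by 1-1 bifurcations are killed by relation \ref{relation-1-1-bifurcation}. Your grading and $d^2=0$ arguments, which the paper defers to Roberts, are correct.

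\textbf{The step that fails.} As written, ``for \ref{relation-eCeC} the same argument applies'' does not work. When you apply $d$ to $\overleftarrow{e}_C\overleftarrow{e}_D=\overleftarrow{e}_D\overleftarrow{e}_C$, a bridge $\eta$ with active circle $C$ that misses $D$ is handled by \ref{relation-eCbridge}. A bridge $\eta$ with one foot on $C$ and the other on $D$, however, has $D$ in its support, so neither \ref{relation-eCbridge} nor any sorting against a second bridge applies. Such an $\eta$ contributes four terms, with the forced decorations: $e_\eta e_{\eta^\dagger}\overleftarrow{e}_D$, $e_\eta e_{\eta^\dagger}\overleftarrow{e}_C$, $\overleftarrow{e}_Ce_\eta e_{\eta^\dagger}$ and $\overleftarrow{e}_De_\eta e_{\eta^\dagger}$. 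Relation \ref{relation-lefteCdivide} at $L_\eta$ rewrites the first two as $e_\eta\overleftarrow{e}_{C\# D}e_{\eta^\dagger}$, and relation \ref{relation-lefteCmerge} at $L$ rewrites the last two as the same element, so the four terms cancel. For the mixed relation $\overleftarrow{e}_C\overrightarrow{e}_D=\overrightarrow{e}_D\overleftarrow{e}_C$ you need \ref{relation-righteCmerge} and \ref{relation-righteCdivide} to get $e_\eta e_{\eta^\dagger}\overrightarrow{e}_D=e_\eta\overrightarrow{e}_{C\# D}e_{\eta^\dagger}=\overrightarrow{e}_De_\eta e_{\eta^\dagger}$. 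This is the paper's separate case of bridges with one foot on $C$ and the other on $C'$. Your list for \ref{relation-eCbridge} also omits bridges $\eta\in B_d(L,\gamma)$ that share a circle with $\gamma$; these are handled by the commuting squares.

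\textbf{1-1 bifurcations.} Your plan via Lemmas \ref{lem:bif-case-iii} and \ref{lem:bif-case-iv} is sound, and it will show that relation \ref{relation-1-1-bifurcation} is needed in both three-term relations. The paper's write-up invokes it only for \ref{relation-lefteCdivide}.
\begin{itemize}
\item In \ref{relation-lefteCdivide}, case III gives the special configuration at $L$ itself.
\item In \ref{relation-lefteCmerge}, suppose a bridge $\eta$ from $C_1$ to $C_2$ has 1-1 image in $L_\gamma$ (case IV). This already happens when $\overleftarrow{\U}$ is a M\"obius band. Then $e_\gamma d(\overleftarrow{e}_C)$ has no $\eta$-term, while $d(\overleftarrow{e}_{C_1})e_\gamma$ and $d(\overleftarrow{e}_{C_2})e_\gamma$ each contain a term $e_\eta e_{\eta^\dagger}e_\gamma$. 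By Lemma \ref{lem:bif-case-iv}, the image of $\gamma$ in $L_\eta$ is 1-1, so $\eta^\dagger$ and that image form the special configuration at $L_\eta$; the remaining conditions follow by counting circles. Relation \ref{relation-1-1-bifurcation} at $L_\eta$, multiplied on the left by $e_\eta$, cancels the two terms.
\end{itemize}
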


We call the resulting differential on $\BB\Gamma_n$, $d_{\BB\Gamma_n}$. The Leibniz rule extends $d$ to $\QQ\Gamma_n$ automatically, but for it to extend to $\BB\Gamma_n$, we must check that $d$ is compatible with the relations.

\begin{proof}
The proof that $d$ extends to a bigrading $(0,1)$ differential on $\QQ\Gamma_n$ is the same as in \cite{roberts-type-d}.

It remains to verify the compatibility of $d$ with the relations defining $\BB\Gamma_n$. The proof is modeled on the proof of \cite[Proposition 26]{roberts-type-d}. We only need to consider relations with paths involving at least one $\overleftarrow{e}_C$ edge since the differential is trivial on any path that does not include a left decoration edge. Note that the relation in Equation~\ref{relation-1-1-bifurcation} is one of those cases that does not include a decoration edge.

\textbf{Relations for disjoint support:} Suppose $(L,\sigma)$ has a circle $C$ with $\sigma(C)=+$ and $\gamma\in\BRIDGE(L)$ where $C$ is not in the support of $\gamma$. Like in \cite{roberts-type-d}, we need to verify that for any bridge $\eta\in\overleftarrow{\Br}(L)$ with $\eta$ not isotopic to $\gamma$ that includes $C$ in its support,
\begin{equation}\label{eq:differential-disjoint}
e_\gamma\overleftarrow{e}_\eta\overleftarrow{e}_{\eta^\dagger}=\overleftarrow{e}_\eta\overleftarrow{e}_{\eta^\dagger}e_\gamma.
\end{equation} Note that if $\eta$ and $\gamma$ are in the same equivalence class, then $\eta$ is not a bridge in $L_\gamma$, but trivially the term in $e_\gamma d(\overleftarrow{e}_C)$ coming from $\eta=\gamma^\dagger$ cancels with the term in $d(\overleftarrow{e}_C)e_\gamma$ coming from $\eta=\gamma$.

Since $C$ is not in the support of $\gamma$, we cannot have the special 1-1 bifurcation configuration, so by the bridge relations,
$$e_\gamma e_{\overline\eta}=e_\eta e_{\overline\gamma},\hspace{10mm}e_{\overline\gamma}e_{\eta^\dagger}=e_{\eta^\dagger}e_\gamma,$$ from which Equation~\ref{eq:differential-disjoint} follows.

Now suppose $C'$ is another circle in $(L,\sigma)$ with $\sigma(C')=+$. We need to verify that $d(\overleftarrow{e}_C\overrightarrow{e}_{C'})=d(\overrightarrow{e}_{C'}\overleftarrow{e}_C)$ and $d(\overleftarrow{e}_C\overleftarrow{e}_{C'})=d(\overleftarrow{e}_{C'}\overleftarrow{e}_C)$.

For the first, we need to show for any $\eta\in\overleftarrow{\Br}(L)$ with $C$ as its active circle,
\begin{equation}\label{eq:diff-right-dec}
    e_\eta e_{\eta^\dagger}\overrightarrow{e}_{C'}=\overrightarrow{e}_{C'}e_\eta e_{\eta^\dagger}.
\end{equation} The verification of Equation~\ref{eq:diff-right-dec} is the same as in \cite{roberts-type-d}.

Now to show $d(\overleftarrow{e}_C\overleftarrow{e}_{C'})=d(\overleftarrow{e}_{C'}\overleftarrow{e}_C)$, we have
$$d(\overleftarrow{e}_C\overleftarrow{e}_{C'})=d(\overleftarrow{e}_C)\overleftarrow{e}_{C'}+\overleftarrow{e}_{C}d(\overleftarrow{e}_{C'}),$$
$$d(\overleftarrow{e}_{C'}\overleftarrow{e}_{C})=d(\overleftarrow{e}_{C'})\overleftarrow{e}_{C}+\overleftarrow{e}_{C'}d(\overleftarrow{e}_C).$$ As in \cite{roberts-type-d}, we split into cases: bridges $\gamma\in\overleftarrow{\Br}(L)$ with only one foot on $C$ or $C'$, improper pairs $\alpha,\overline\beta$ in $\overleftarrow{\U}$ with active circle $C$ or $C'$, and bridges $\gamma\in\overleftarrow{\Br}(L)$ with one foot on $C$ and the other foot on $C'$. 

For $\gamma\in\overleftarrow{\Br}(L)$ with one foot on $C$ and disjoint from $C'$, by the relations for disjoint support, 
$$e_\gamma e_{\gamma^\dagger}\overleftarrow{e}_{C'}=e_\gamma\overleftarrow{e}_{C'}e_{\gamma^\dagger}=\overleftarrow{e}_{C'}e_\gamma e_{\gamma^\dagger}.$$ The term on the left is a term in $d(\overleftarrow{e}_C)\overleftarrow{e}_{C'}$ the the term on the right is a term in $\overleftarrow{e}_{C'}d(\overleftarrow{e}_C)$. A similar argument works for improper pairs with $C$ as their active circle since they also have disjoint support from $\overleftarrow{e}_{C'}$. A similar argument holds for bridges with only $C'$ in their support, showing they cancel out. The case where $\gamma\in\overleftarrow{\Br}(L)$ has one foot on $C$ and the other on $C'$ is dealt with in the same way as in \cite{roberts-type-d}.

\textbf{Relations from bridges:} Suppose $\gamma$ is a bridge that merges cleaved circles $C_1$ and $C_2$ with $\sigma(C_1)=\sigma(C_2)=+$ to get $C$, then we have the relation as in Equation~\ref{relation-lefteCmerge}. If we apply $d$ to all the terms and simplify using the Leibniz rule, we need to verify the relation
\begin{equation}\label{eq:diff-double-left-ec}
d(\overleftarrow{e}_{C_1})e_\gamma+d(\overleftarrow{e}_{C_2})e_\gamma+e_\gamma d(\overleftarrow{e}_C)=0.
\end{equation}

As in \cite{roberts-type-d}, we divide the bridges $\eta$ in $\overleftarrow{\Br}(L)$ whose image $\overline\eta$ in $L_\gamma$ abuts $C$ into three groups: 1) those abutting $C_1$ and not $C_2$, 2) those abutting $C_2$ and not $C_1$, and 3) those between arcs in $C_1$ and $C_2$. Furthermore, we must consider the bridges $\gamma$ and $\gamma^\dagger$ themselves.

Let $\eta$ be a bridge in 1), then we do not have the special 1-1 bifurcation configuration. If $\gamma\in\overrightarrow{\Br}(L)$ or $\gamma\in B_d(L,\eta)\cup B_o(L,\eta)$ then we get commuting square relations and
$$e_\gamma e_{\overline\eta}e_{\overline\eta^\dagger}=e_\eta e_{\overline\gamma}e_{\overline\eta^\dagger}=e_\eta e_{\eta^\dagger}e_{\overline\gamma},$$ where the left term is a term in $e_\gamma d(\overleftarrow{e}_C)$ and the right term is a term in $d(\overleftarrow{e}_{C_1})e_\gamma$. 

If $\gamma\in\overleftarrow{\Br}(L)$ and $\gamma\in B_s(L,\eta)$, and $\delta$ is the bridge obtained by sliding an end of $\gamma$ over $\eta$, then $\delta$ is in 2) or 3). From Relation~\ref{relation-bridge-same} and the commuting square relations, we obtain
\begin{align*}
    e_\gamma e_{\overline\eta}e_{\overline\eta^\dagger} &= e_\eta e_{\overline\gamma}e_{\overline\eta^\dagger}+e_{\delta}e_{\overline\gamma}e_{\overline\delta^\dagger} \\
    &= e_\eta e_{\eta^\dagger}e_{\overline\gamma}+e_{\delta}e_{\delta^\dagger}e_{\overline\gamma},
\end{align*} where the left hand side is a term in $e_\gamma d(\overleftarrow{e}_C)$, the first term on the second row is a term in $d(\overleftarrow{e}_{C_1})e_\gamma$, and the third term is a term in $d(\overleftarrow{e}_{C_2})e_\gamma$ or $d(\overleftarrow{e}_{C_1})e_\gamma$ but not both. 

Similarly, bridges from 2) give terms that cancel in Equation~\ref{eq:diff-double-left-ec}.

Now we consider bridges $\eta$ in 3) that abut arcs from $C_1$ and $C_2$ but does not equal $\gamma$ or $\gamma^\dagger$. Again we do not have the special 1-1 bifurcation configuration. If $\gamma\in\overleftarrow{\Br}(L)$ and $\gamma\in B_s(L,\eta)$ then $\delta$, the bridge obtained by sliding an end of $\gamma$ over $\eta$, is in 1) or 2), so this case is already accounted for above. Otherwise, we have the commuting square relations and the argument follows as in \cite{roberts-type-d}.

Lastly, the contribution of $\gamma^\dagger$ to $e_\gamma d(\overleftarrow{e}_C)$ cancels with the contribution of $\gamma$ to $d(\overleftarrow{e}_{C_1})e_\gamma$ and $d(\overleftarrow{e}_{C_2})e_\gamma$ the same way as in \cite{roberts-type-d}.

Now suppose $\gamma$ divides a cleaved circle $C\in\CIR(L)$ with $\sigma(C)=+$ into two cleaved circles $C_1$ and $C_2$. We need to verify the relation
\begin{equation}\label{eq:diff-left-ec-split}
    d(\overleftarrow{e}_C)e_\gamma + e_{(\gamma,\sigma,\sigma_1)}d(\overleftarrow{e}_{C_1})+e_{(\gamma,\sigma,\sigma_2)}d(\overleftarrow{e}_{C_2})=0,
\end{equation} where $\sigma_i$ is the decoration on $L_\gamma$ satisfying $\sigma_i(C_i)=+$. The argument is similar to the case where $\gamma$ is a merge, but in this case, the special 1-1 bifurcation configuration may arise. If $\eta\in\Br(L)$ is a 1-1 bifurcation of $C$ to a circle $C'$ such that $\overline\gamma\in\Br(L_\eta)$ is a 1-1 bifurcation, then
\begin{align*}
    0 &= e_{(\gamma,\sigma,\sigma_1)}e_{\overline\eta}e_{\overline\eta^\dagger}+e_{(\gamma,\sigma,\sigma_2)}e_{\overline\eta}e_{\overline\eta^\dagger} \\
    &= e_{(\gamma,\sigma,\sigma_1)}e_{\overline\eta}e_{\overline\eta^\dagger}+e_{(\gamma,\sigma,\sigma_2)}e_{\overline\eta}e_{\overline\eta^\dagger}.
\end{align*} Each term on the second line appears exactly once in Equation~\ref{eq:diff-left-ec-split}.

\end{proof}

This completes the construction of $(\BB\Gamma_n,d_{\BB\Gamma_n})$ as a differential, bigraded $\Z/2$-algebra.

\section{The APS Chain Complex}

Given a tangle diagram $\overrightarrow{T}$, we define a bigraded module $\llbracket \overrightarrow{T}\rrangle$ and a differential $\overrightarrow{d}_{\APS}$ on $\llbracket \overrightarrow{T}\rrangle$. A similar definition gives a bigraded module $\llangle \overleftarrow{T}\rrbracket$ and a differential $\overleftarrow{d}_{\APS}$ for left tangles $\overleftarrow{T}$.

\begin{definition}
A state for $\overrightarrow{T}$ is a pair $(r,s)$, where
\begin{enumerate}
    \item $r$ is a resolution for $\overrightarrow{T}$,
    \item $s$ is an assignment of an element of $\{+,-\}$ to each circle of $r$. This assignment will be called a decoration on $r$.
\end{enumerate}
\end{definition}

The states for $\overrightarrow{T}$ will be denoted $\STATE(\overrightarrow{T})$.

If we ignore the free circles of a state $(r,s)$ and only look at the cleaved circles and the decoration restricted to those cleaved circles, we get a decorated $n$-cleaved link called the boundary of $(r,s)$.

\begin{definition}
The boundary of a state $(r,s)$ for $\overrightarrow{T}$ is the element $$\partial(r,s)=(\cl(r),\sigma)\in\CC\LL_n,$$ where $\sigma=s\vert_{\cl(r)}$.
\end{definition}

\begin{definition}
For a state $(r,s)\in\STATE(\overrightarrow{T})$ with $r=(\rho,\overleftarrow{m})$, let
\begin{enumerate}
    \item $h(r)=\sum_{c\in\CR(\overrightarrow{T})}\rho(c)$,
    \item $q(r,s)=\sum_{C\in\FREE(r)}s(C)$,
    \item $\iota(r,s)=\iota(\partial(r,s))$.
\end{enumerate}
\end{definition}

\begin{definition}
Given $\overrightarrow{T}$ a right tangle diagram and $(L,\sigma)\in\CC\LL_n$, let
$$\overrightarrow{CK}(\overrightarrow{T},L,\sigma)=\bigoplus_{(r,s)\in\STATE(\overrightarrow{T}),\ \partial(r,s)=(L,\sigma)}\Z/2\Z\cdot(r,s),$$
where $(r,s)$ occurs in bigrading $(h(r)-n_-,h(r)+q(r,s)+\frac{1}{2}\iota(r,s)+n_+-2n_-)$.

Define
$$\llbracket \overrightarrow{T}\rrangle=\bigoplus_{(L,\sigma)\in\CC\LL_n}\overrightarrow{CK}(\overrightarrow{T},L,\sigma).$$
\end{definition}

The first entry of the bigrading is called the homological grading while the second is the quantum grading. There is a left action of $\II_n\subset\BB\Gamma_n$ on $\llbracket\overrightarrow{T}\rrangle$:
$$I_{(L,\sigma)}\cdot(r,s)=\begin{cases}
    (r,s) & \partial(r,s)=(L,\sigma) \\
    0 & \text{otherwise.}
\end{cases}$$
Thus $I_{(L,\sigma)}$ acts non-trivially only on the summand $\overrightarrow{CK}(\overrightarrow{T},L,\sigma)$. 

The definition of the module $\llangle\overleftarrow{T}\rrbracket$ for left tangles $\overleftarrow{T}$ is similar to the definition of $\llbracket\overrightarrow{T}\rrangle$, with the only difference being $\II_n$ acts on the right instead of the left.

\subsection{Types of Bridges}

Let $(r,s)\in\STATE(T)$ and $\alpha=(\alpha,s,s')\in\Br(r)$, then surgery on $\alpha$ has one of three effects: either it does not change the cleaved link $(L,\sigma)=\partial(r,s)$, it changes only the decoration of a cleaved circle component in $L$, or it changes $L$. In the first, $\alpha\in\INT(r,s)$, in the second, $\alpha\in\DEC(r,s)$, and in the third, $\alpha\in R(r)$.

\begin{definition}
A bridge $\alpha=(\alpha,s,s')\in\Br(r)$ is in exactly one of the following three sets:
\begin{enumerate}
    \item The set $\INT(r,s)$ consists of all $\alpha$ such that
    \begin{enumerate}
        \item $\alpha$ has both feet on free circles,
        \item $\alpha$ merges a cleaved circle $C$ with a free circle $D$ such that $s(D)=+$, or
        \item $\alpha$ splits a cleaved circle $C$ into a cleaved circle $C'$ and a free circle $D$ such that $s(C)=s(C')$.
    \end{enumerate}
    \item The set $\DEC(r,s)$ consists of all $\alpha$ such that
    \begin{enumerate}
        \item $\alpha$ merges a cleaved circle $C$ with $s(C)=+$ with a free circle $D$ with $s(D)=-$, or
        \item $\alpha$ splits a cleaved circle $C$ with $s(C)=+$ into a cleaved circle $C'$ and a free circle $D$ with $s(C')=-$.
    \end{enumerate}
    \item The set $R(r)$ consists of all $\alpha$ such that
    \begin{enumerate}
        \item $\alpha$ merges two cleaved circles $C_1$ and $C_2$, or
        \item $\alpha$ splits a cleaved circle $C$ into two cleaved circles $C_1$ and $C_2$.
    \end{enumerate}
\end{enumerate}
\end{definition}

Furthermore, for $C$ a cleaved circle with $s(C)=+$, let $\DEC(r,s,C)$ be the subset of $\DEC(r,s)$ of all bridges $\alpha=(\alpha,s,s')$ that change the decoration of the circle $C$. Every bridge $\alpha\in\DEC(r,s)$ is in $\DEC(r,s,C)$ for some $C$.

We define $e(\alpha)\in\BB\Gamma_n$ to be
$$e(\alpha)=\begin{cases}
    I_{(L,\sigma)} & \alpha\in\INT(r,s) \\
    \overleftarrow{e}_C & \alpha\in\overleftarrow{\DEC}(r,s,C) \\
    \overrightarrow{e}_C & \alpha\in\overrightarrow{\DEC}(r,s,C) \\
    e_\alpha & \alpha\in R(r).
\end{cases}$$

\subsection{Existence of Squares}

The following theorem concerns the existence of commuting squares of bridges. The structural relations for the algebraic objects we will define, namely the differential on the APS chain complex satisfying $d^2=0$, the type D relation, and the type A relation, will almost immediately follow from this theorem.

\begin{theorem}\label{thm:existence-squares}
Let $(r,s)\in\STATE(T)$ for some tangle $T$ in either $\overleftarrow{\U}$ or $\overrightarrow{\U}$ with $\partial(r,s)=(L,\sigma)$. Let $\alpha,\beta\in\Br(r)$ such that at least one of $\alpha$ or $\beta$ is in $\overrightarrow{\Br}(r)$ or $\beta\in B_d(L,\alpha)$, and the path
$$(r,s)\xrightarrow{(\alpha,s,s_\alpha)}(r_\alpha,s_\alpha)\xrightarrow{(\overline\beta,s_\alpha,s_{\alpha,\beta})}(r_{\alpha,\beta},s_{\alpha,\beta})$$ exists for some choice of decorations $\alpha=(\alpha,s,s_\alpha)$ and $\overline\beta=(\overline\beta,s_\alpha,s_{\alpha,\beta})$ such that $e(\alpha)e(\overline\beta)\ne 0$. Then if $\beta$ is not a 1-1 bifurcation, there exists a path
$$(r,s)\xrightarrow{(\beta,s,s_\beta)}(r_\beta,s_\beta)\xrightarrow{(\overline\alpha,s_\alpha,s_{\beta,\alpha})}(r_{\beta,\alpha},s_{\beta,\alpha})$$ with $(r_{\alpha,\beta},s_{\alpha,\beta})=(r_{\beta,\alpha},s_{\beta,\alpha})=(r',s')$. Furthermore, 
\begin{enumerate}
    \item There are at most two choices of decorations for $\alpha$ and $\overline\beta$ to obtain a path from $(r,s)$ to $(r',s')$.
    \item The choice of decorations for $\alpha$ and $\overline\beta$ to obtain a path from $(r,s)$ to $(r',s')$ is unique if and only if a choice of decorations for $\beta$ and $\overline\alpha$ to obtain a path from $(r,s)$ to $(r',s')$ exists and is unique. In this case, there is a relation $$e(\alpha)e(\overline\beta)=e(\beta)e(\overline\alpha),$$ or $e(\alpha)e(\overline\beta)$ and $e(\beta)e(\overline\alpha)$ are two of the three terms in Relation~\ref{relation-lefteCmerge} or Relation~\ref{relation-lefteCdivide}.
    \item If there are two choices of decorations $\alpha_1,\overline\beta_1$ and $\alpha_2,\overline\beta_2$ for $\alpha$ and $\overline\beta$ to obtain a path from $(r,s)$ to $(r',s')$, then either there are two choices of decorations $\beta_1,\overline\alpha_1$ and $\beta_2,\overline\alpha_2$ for $\beta$ and $\overline\alpha$ to go from $(r,s)$ to $(r',s')$, or there are no choices of decorations for $\beta$ and $\overline\alpha$. In the first case, $e(\alpha_1)e(\overline\beta_1)$ appears in a relation
    $$e(\alpha_1)e(\overline\beta_1)=e(\alpha_2)e(\overline\beta_2)=e(\beta_1)e(\overline\alpha_1)=e(\beta_2)e(\overline\alpha_1).$$ In the second, we have the special 1-1 bifurcation configuration and
    $$e(\alpha_1)e(\overline\beta_1)=e(\alpha_2)e(\overline\beta_2).$$
\end{enumerate}
\end{theorem}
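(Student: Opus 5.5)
We first handle the underlying undecorated diagrams, then the decorations. Since $\beta$ is not a 1-1 bifurcation and $\alpha,\beta$ are distinct bridges of $r$ (resolution bridges have disjoint support at crossings, and left bridges are taken with $\beta\in B_\parallel(L,\alpha)$ under the hypotheses), surgery along $\alpha$ and $\beta$ commutes at the level of crossingless diagrams. Hence $r_{\alpha,\beta}=r_{\beta,\alpha}$, and $\overline\alpha$ is a bridge of $r_\beta$. It therefore suffices to compare the sets of decorations along the two paths between the fixed endpoints $(r,s)$ and $(r',s')$.

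We split according to the combined support of $\alpha$ and $\beta$. If the supports are disjoint, the decoration changes are independent. Each choice for $(\alpha,\overline\beta)$ then gives exactly one choice for $(\beta,\overline\alpha)$, and the relation is one of \eqref{relation-eCeC}, \eqref{relation-eCbridge}, \eqref{relation-bridgebridge}, or $I\cdot e=e\cdot I$ when a bridge is in $\INT$. Otherwise, the union of the supports consists of at most three circles before and after. We enumerate the configurations of the two surgeries by type. In the ordinary case, where no 1-1 bifurcation is involved, the types are merge--merge, merge--split, split--merge, and split--split, together with whether each circle is free or cleaved. This determines which of $\INT$, $\DEC$, $R$ each bridge lies in, and so determines $e(\alpha)$, $e(\overline\beta)$, $e(\beta)$, $e(\overline\alpha)$.

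In each of these cases the count of admissible intermediate decorations is exactly the computation done for the APS/Khovanov cube in $S^3$. It is at most two, namely the two ways of distributing a single $+$ across a split, and the counts on the two sides agree. This yields statements (1) and (2). The corresponding algebra relation is a commuting-square relation: \eqref{relation-bridgesquare} when one arc is right or $\beta\in B_d(L,\alpha)$; \eqref{relation-righteCmerge} or \eqref{relation-righteCdivide} when one edge becomes a right decoration edge; or two terms of \eqref{relation-lefteCmerge} or \eqref{relation-lefteCdivide} when one edge becomes $\overleftarrow{e}_C$. The hypothesis $e(\alpha)e(\overline\beta)\neq0$ excludes the relation \eqref{relation-bridge-intersect} situation. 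The double-choice case with four paths gives statement (3a), using the equalities in \eqref{relation-righteCdivide} and \eqref{relation-bridgesquare}.

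The new input, and the main obstacle, is the presence of 1-1 bifurcations. By hypothesis $\beta$ is not one, but $\overline\beta$, $\alpha$, or $\overline\alpha$ might be. Since bifurcations carry no edges, a path through $\overline\beta$ forces $\overline\beta$ not to be a bifurcation. We then apply Lemmas~\ref{lem:bif-case-iii} and \ref{lem:bif-case-iv} to the pair $(\alpha,\beta)$. If $\alpha$ is a bifurcation while $\beta$ is not, we are in case II or case III. In case II, $\overline\alpha$ is also a bifurcation, so the path through $\alpha$ would not exist; this contradicts the hypothesis. In case III, Lemma~\ref{lem:bif-case-iii} says $\beta$ splits $C$ and $\overline\alpha$ merges the pieces. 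The path through $\alpha$ does not exist, so this case does not arise with $\alpha$ first.

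If instead neither $\alpha$ nor $\beta$ is a bifurcation but $\overline\alpha$ is (case IV with the roles swapped), Lemma~\ref{lem:bif-case-iv} shows that $\overline\beta$ is also a bifurcation, which is excluded. The remaining genuinely new configuration is the one where $\alpha$ splits $C$ with $\sigma(C)=+$, $\overline\beta$ merges the two pieces, and $\beta$ is a 1-1 bifurcation. This contradicts the hypothesis on $\beta$ unless we examine it carefully. The subcase where $\beta$ splits and is followed by the bifurcation $\overline\alpha$ yields exactly the special 1-1 bifurcation configuration. There the two $\alpha$-paths (with $+$ on $C_1$ or on $C_2$) both exist while no $\beta$-path exists, and \eqref{relation-1-1-bifurcation} gives statement (3b). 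We expect the bulk of the work to be in verifying carefully, via the two lemmas, that no other mixed configuration survives, and that the decoration counts in the non-bifurcating cases are as claimed.
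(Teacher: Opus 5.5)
Your plan follows the paper's route. You dispose of disjoint supports, enumerate the merge/fission/bifurcation types of $(\alpha,\beta,\overline\alpha,\overline\beta)$, split into free/cleaved subcases, and isolate the special 1-1 bifurcation configuration. Using Lemmas~\ref{lem:bif-case-iii} and~\ref{lem:bif-case-iv} is a tidy substitute for the paper's list (6)--(11). They show that once $\alpha$ and $\overline\beta$ are not bifurcations, either no bifurcation occurs or you are in (Fission, Bifurcation, Bifurcation, Merge). In that configuration $\alpha$ splits and $\beta$ is the bifurcation, not "$\beta$ splits" as you wrote, so the second alternative of part (3) lies outside the opening hypothesis on $\beta$; the paper reads it the same way.

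The gap is your claim that every remaining case is closed by a commuting-square relation: \eqref{relation-bridgesquare}, \eqref{relation-righteCmerge}/\eqref{relation-righteCdivide}, or two terms of \eqref{relation-lefteCmerge}/\eqref{relation-lefteCdivide}. Surgeries commute on resolutions, but $e(\cdot)$ only sees the cleaved link, where free circles are invisible. There, a genuine square of states can become a there-and-back path on one side and a single decoration edge on the other.

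Concretely, take (Merge, Fission, Merge, Fission) with $C_1,C_3$ cleaved and $C_2$ free, in the right-tangle setting. Both $\alpha$ and $\beta$ are resolution bridges in $\overrightarrow{\U}$, since each abuts the portion of $C_0$ that becomes free. The image $\overline\beta$ represents the cocore $\alpha^\dagger$ in $\overrightarrow{\Br}(L_\alpha)$. So one side is $\overrightarrow{e}_\alpha\overrightarrow{e}_{\alpha^\dagger}$, while the other is $\overrightarrow{e}_{C_3}\cdot I$ or $I\cdot\overrightarrow{e}_{C_1}$.

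Similarly, in (Fission, Fission, Merge, Merge) with three cleaved circles and one free, the four-path equality of part (3) reads $\overrightarrow{e}_{\alpha_1}\overrightarrow{e}_{\overline\beta_1}=\overrightarrow{e}_{\alpha_2}\overrightarrow{e}_{\overline\beta_2}=\overrightarrow{e}_{C_0}I=I\overrightarrow{e}_{C_3}$. This is not given by \eqref{relation-righteCdivide} and \eqref{relation-bridgesquare} as you claim. Neither of those equates a product of two bridge edges with a single decoration edge.

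The missing ingredient is Relation~\eqref{relation-thereandback}, which the paper invokes in exactly these cases and which your plan never uses. With it added, the casework goes through as in the paper.

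A smaller point concerns the special configuration. Relation~\eqref{relation-1-1-bifurcation} covers it only when $C_1$ and $C_2$ are both cleaved. If one is free, the two products are $I\cdot e_{C_1}$ and $e_{C_0}\cdot I$, which agree because $C_0$ and $C_1$ are the same cleaved circle. If both are free, each product is $I\cdot I$.
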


The proof is a long but routine checking of cases.

\begin{proof}

We fix $(r,s)$ and $(r',s')$ and consider the kinds of bridges $\alpha,\beta,\overline\alpha,\overline\beta$ that can go from $(r,s)$ to $(r',s')$: merges of two circles into one, fission of one circle into two, and 1-1 bifurcations. Along the two paths $\alpha,\overline\beta$ and $\beta,\overline\alpha$, the change in the number of circles must be the same. 

In the following, we enumerate the possible types of bridges we can have for $(\alpha,\beta,\overline\alpha,\overline\beta)$, up to switching the roles of $\alpha$ and $\beta$.

First, consider configurations without 1-1 bifurcations. If 1-1 bifurcations are not present, then the change in the number of circles must be an even number. For a configuration that decreases the number of circles by $2$, the four bridges involved must all be merges:
\begin{enumerate}
    \item (Merge, Merge, Merge, Merge).
\end{enumerate}

For a configuration that preserves the number of circles, if $\alpha$ is a merge, then $\overline\beta$ must be a fission, and vice versa, and the same with $\beta$ and $\overline\alpha$. There are three such configurations:
\begin{enumerate}
    \item[(2)] (Merge, Fission, Merge, Fission),
    \item[(3)] (Merge, Fission, Merge, Fission),
    \item[(4)] (Merge, Merge, Fission, Fission).
\end{enumerate}

For a configuration that increases the number of circles by $2$, the four bridges must all be fissions:
\begin{enumerate}
    \item[(5)] (Fission, Fission, Fission, Fission).
\end{enumerate}

Now consider configurations that include at least one 1-1 bifurcation. The change in the number of circles must be between $-1$ and $+1$, and the parity of the number of 1-1 bifurcations in $\alpha,\overline\beta$ is equal to the parity of 1-1 bifurcations in $\beta,\overline\alpha$. 

For a configuration that decreases the number of circles by $1$, there cannot be any fissions, and if $\alpha$ is a merge, then $\overline\beta$ must be a 1-1 bifurcation and vice versa, and the same with $\beta$ and $\overline\alpha$. There are two possibilities:
\begin{enumerate}
    \item[(6)] (Bifurcation, Merge, Bifurcation, Merge),
    \item[(7)] (Merge, Merge, Bifurcation, Bifurcation).
\end{enumerate}

(Bifurcation, Bifurcation, Merge, Merge) is impossible because if $\alpha$ is a 1-1 bifurcation, it has both feet on the same circle, and this is still true after surgery on $\beta$, so $\overline\alpha$ cannot be a merge.

For a configuration that increases the number of circles by $1$, there cannot be any merges, and if $\alpha$ is a fission, then $\overline\beta$ must be a 1-1 bifurcation and vice versa, and the same with $\beta$ and $\overline\alpha$. There are two possibilities:
\begin{enumerate}
    \item[(8)] (Bifurcation, Fission, Bifurcation, Fission),
    \item[(9)] (Bifurcation, Bifurcation, Fission, Fission). 
\end{enumerate}

(Fission, Fission, Bifurcation, Bifurcation) is ruled out by Lemma~\ref{lem:bif-case-iv}.

Lastly, for a configuration that preserves the number of circles, each path $\alpha,\overline\beta$ and $\beta,\overline\alpha$ must consist of two 1-1 bifurcations or a merge and a fission. There are two possibilities:
\begin{enumerate}
    \item[(10)] (Bifurcation, Bifurcation, Bifurcation, Bifurcation),
    \item[(11)] (Fission, Bifurcation, Bifurcation, Merge).
\end{enumerate}

(Bifurcation, Merge, Fission, Bifurcation) is impossible because $\beta$ has feet on two different circles, which is still true after surgery on $\alpha$.

The statement of the theorem is trivial if $\alpha$ and $\beta$ have disjoint support by the relations for disjoint support. Thus, we will only consider cases where $\alpha$ and $\beta$ have nondisjoint support. We break each of the cases above into subcases based on whether the circles are free or cleaved.

\textbf{(Merge, Merge, Merge, Merge)}

Suppose $C_1,C_2,C_3$ are circles in $r$ such that $\alpha$ merges $C_1$ and $C_2$ while $\beta$ merges $C_2$ and $C_3$. The two paths $\alpha,\overline\beta$ and $\beta,\overline\alpha$ exist if at least two of the circles have $+$ decorations, and neither path exists otherwise. In the following we consider the cases where the two paths exist.

\begin{enumerate}
    \item If $C_1,C_2$, and $C_3$ are cleaved, then $e(\alpha)e(\overline\beta)=e(\beta)e(\overline\alpha)$ from the bridge square relations.
    \item Suppose $C_1$ and $C_2$ are cleaved but $C_3$ is free. Since $\beta$ has one foot on a free circle, it must be an active arc for $r$. If $s(C_3)=+$, then at least one of $C_1$ or $C_2$ has $+$ decorations. We have $e(\alpha)=e_\alpha$, $e(\overline\beta)=I$, $e(\beta)=I$, and $e(\overline\alpha)=e_{\overline\alpha}$, and
        $$e_\alpha I=e_\alpha=e_{\overline\alpha}=Ie_{\overline\alpha}.$$

        If $s(C_3)=-$, then both $C_1$ and $C_2$ have $+$ decorations. We have $e(\alpha)=e_\alpha$, $e(\overline\beta)=e_{C_1\#C_2}$, $e(\beta)=e_{C_2}$, and $e(\overline\alpha)=e_{\overline\alpha}$. If $\beta\in\overrightarrow{\Br}(r)$, then we have
        $$e_\alpha \overrightarrow{e}_{C_1\# C_2}=\overrightarrow{e}_{C_2}e_{\overline\alpha}.$$ If $\beta\in\overleftarrow{\Br}(r)$, then
        $$e_\alpha\overleftarrow{e}_{C_1\# C_2}+\overleftarrow{e}_{C_1}e_{\overline\alpha} +\overleftarrow{e}_{C_2}e_{\overline\alpha}=0.$$

    \item Suppose $C_1$ is cleaved, $C_2$ is free, and $C_3$ is cleaved. If $s(C_2)=+$, then $e(\alpha)=I$, $e(\overline\beta)=e_{\overline\beta}$, $e(\beta)=I$, and $e(\overline\alpha)=e_{\overline\alpha}$, and
        $$Ie_{\overline\beta}=e_{\overline\beta}=e_{\overline\alpha}=Ie_{\overline\alpha}.$$

        If $s(C_2)=-$, then $e(\alpha)=e_{C_1}$, $e(\overline\beta)=e_{\overline\beta}$, $e(\beta)=e_{C_3}$, and $e(\overline\alpha)=e_{\overline\alpha}$. Note that $\alpha$ and $\beta$ are either both in $\overleftarrow{\U}$ or both in $\overrightarrow{\U}$. If they are both right bridges then we have
        $$\overrightarrow{e}_{C_1}e_{\overline\beta}=\overrightarrow{e}_{C_3}e_{\overline\alpha},$$ and if they are both left bridges then we have
        $$\overleftarrow{e}_{C_1}e_{\overline\beta}+\overleftarrow{e}_{C_3}e_{\overline\alpha}+ e_\gamma\overleftarrow{e}_{C_1\#C_3}=0,$$ where $\gamma$ is the left bridge with one end on $C_1$ and the other end on $C_3$ obtained by sliding an end of $\alpha$ over $\beta$. 
    \item The case where $C_1$ is free and $C_2$ and $C_3$ are cleaved is symmetric to case (2).
    \item Suppose $C_1$ is cleaved and $C_2$ and $C_3$ are free. If $s(C_2)=s(C_3)=+$ then we have
        $$e(\alpha)e(\overline\beta)=I\cdot I=e(\beta)e(\overline\alpha).$$ If $s(C_3)=-$ then we have
        $$e(\alpha)e(\overline\beta)=Ie_{C_1\# C_2}=Ie_{C_1}=e(\beta)e(\overline\alpha).$$ If $s(C_2)=-$ then we have
        $$e(\alpha)e(\overline\beta)=e_{C_1}I=Ie_{C_1}=e(\beta)e(\overline\alpha).$$
    \item Suppose $C_1$ is free, $C_2$ is cleaved, and $C_3$ is free. If $s(C_1)=s(C_3)=+$ then we have
        $$e(\alpha)e(\overline\beta)=I\cdot I=e(\beta)e(\overline\alpha).$$ If one of $C_1$ or $C_3$ has $-$ decoration, say $C_1$, then 
        $$e(\alpha)e(\overline\beta)=e_{C_2}I=Ie_{C_2\#C_3}=e(\beta)e(\overline\alpha).$$
    \item The case where $C_1$ and $C_2$ are free and $C_3$ is cleaved is symmetric to case (5).
    \item If $C_1$, $C_2$, and $C_3$ are free, then $e(\alpha)=e(\overline\beta)=e(\beta)=e(\overline\alpha)=I$.
\end{enumerate}

\textbf{(Merge, Fission, Merge, Fission)}

Suppose $\alpha$ merges two circles $C_1$ and $C_0$ while $\beta$ splits $C_0$ into two circles $C_2$ and $C_3$ with $\overline\alpha$ having one foot on $C_1$ and the other on $C_2$. Both paths exist if at least one of $C_1$ or $C_0$ has $+$ decoration, and neither path exists otherwise. In the cases where paths exist:

\begin{enumerate}
    \item Suppose $C_1,C_2$, and $C_3$ are cleaved. 
       If $s(C_1)=s(C_0)=+$, then there are two different possibilities for $(r',s')$: one with $s'(C_1\# C_2)=+$ and $s'(C_3)=-$, and the other with $s'(C_1\#C_2)=-$ and $s'(C_3)=+$. For each $(r',s')$, there are two paths from $(r,s)$ to $(r',s')$, and we get
        $$e(\alpha)e(\overline\beta)=e(\beta)e(\overline\alpha).$$

        If one of $C_1$ or $C_0$ has $-$ decoration and the other has $+$, then there is one choice for $(r',s')$ and the two paths from $(r,s)$ to $(r',s')$ commute to give
        $$e(\alpha)e(\overline\beta)=e(\beta)e(\overline\alpha).$$
    \item Suppose $C_1$ and $C_2$ are cleaved and $C_3$ is free. If $s(C_1)=s(C_0)=+$, then there are two possibilities for $(r',s')$ as in case (1). If $s'(C_1\#C_2)=+$ and $s'(C_3)=-$, then we have
        $$e(\alpha)e(\overline\beta)=e_\alpha I=Ie_{\overline\alpha}=e(\beta)e(\overline\alpha).$$
        
        If on the other hand $s'(C_1\# C_2)=-$ and $s'(C_3)=+$, then we have
        $e(\alpha)e(\overline\beta)=e_\alpha e_{C_1\# C_0}$ and $e(\beta)e(\overline\alpha)=e_{C_0}e_{\overline\alpha}$. If $\beta$ is in $\overrightarrow{\U}$ then 
        $$e_\alpha \overrightarrow{e}_{C_1\# C_0}=\overrightarrow{e}_{C_0}e_{\overline\alpha},$$ otherwise
        $$e_\alpha \overleftarrow{e}_{C_1\# C_0}+\overleftarrow{e}_{C_0}e_{\overline\alpha}+\overleftarrow{e}_{C_1}e_{\overline\alpha}=0.$$

        If one of $C_1$ or $C_0$ has $-$ decoration and the other has $+$, then we have
        $$e(\alpha)e(\overline\beta)=e_\alpha I=Ie_{\overline\alpha}=e(\beta)e(\overline\alpha).$$

    \item Suppose $C_1$ is cleaved, $C_2$ is free, and $C_3$ is cleaved. If $s(C_1)=s(C_0)=+$, then as before there are two possibilities for $(r',s')$. Note that $\overline\beta=\alpha^\dagger$ as bridges and $\alpha$ and $\beta$ are both in $\overrightarrow{\U}$ by assumption. If $s'(C_1\#C_2)=+$ and $s'(C_3)=-$, then we have
        $$e(\alpha)e(\overline\beta)=\overrightarrow{e}_\alpha \overrightarrow{e}_{\overline\beta}=\overrightarrow{e}_{C_3}I=e(\beta)e(\overline\alpha)$$ by Relation~\ref{relation-thereandback}. If $s'(C_1\# C_2)=-$ and $s'(C_3)=+$, then we have
        $$e(\alpha)e(\overline\beta)=\overrightarrow{e}_\alpha \overrightarrow{e}_{\overline\beta}=I\overrightarrow{e}_{C_1}=e(\beta)e(\overline\alpha)$$ again by Relation~\ref{relation-thereandback}.

        If $s(C_1)=+$ and $s(C_0)=-$ we have
        $$e(\alpha)e(\overline\beta)=\overrightarrow{e}_\alpha \overrightarrow{e}_{\overline\beta}=I\overrightarrow{e}_{C_1}=e(\beta)e(\overline\alpha).$$ If $s(C_1)=-$ and $s(C_0)=+$ we have
        $$e(\alpha)e(\overline\beta)=\overrightarrow{e}_\alpha \overrightarrow{e}_{\overline\beta}=\overrightarrow{e}_{C_3}I=e(\beta)e(\overline\alpha).$$

    \item Suppose $C_1$ is free, and $C_2$ and $C_3$ are cleaved. If $s(C_1)=s(C_0)=+$ then there are two possibilities for $(r',s')$. In either case,
        $$e(\alpha)e(\overline\beta)=Ie_{\overline\beta}=e_\beta I=e(\beta)e(\overline\alpha).$$

        If $s(C_1)=-$ and $s(C_0)=+$ then $e(\alpha)e(\overline\beta)=e_{C_0}e_{\overline\beta}$ and $e(\beta)e(\overline\alpha)=e_\beta e_{C_2}$. If $\alpha$ is in $\overrightarrow{\U}$ then 
        $$\overrightarrow{e}_{C_0}e_{\overline\beta}=e_\beta \overrightarrow{e}_{C_2},$$ otherwise
        $$\overleftarrow{e}_{C_0}e_{\overline\beta}+e_\beta \overleftarrow{e}_{C_2}+e_\beta\overleftarrow{e}_{C_3}=0.$$

        If $s(C_1)=+$ and $s(C_0)=-$ then 
        $$e(\alpha)e(\overline\beta)=Ie_{\overline\beta}=e_\beta I=e(\beta)e(\overline\alpha).$$

    \item Suppose $C_1$ is cleaved and $C_2$ and $C_3$ are free. If $s(C_1)=s(C_0)=+$ then there are two possibilities for $(r',s')$. If $s'(C_1\#C_2)=+$ and $s'(C_3)=-$ then
        $$e(\alpha)e(\overline\beta)=I\cdot I=e(\beta)e(\overline\alpha).$$ If $s'(C_1\#C_2)=-$ and $s'(C_3)=+$ then
        $$e(\alpha)e(\overline\beta)=Ie_{C_1\#C_0}=Ie_{C_1}=e(\beta)e(\overline\alpha).$$

        If $s(C_1)=-$ and $s(C_0)=+$ then $$e(\alpha)e(\overline\beta)=I\cdot I=e(\beta)e(\overline\alpha).$$

        If $s(C_1)=+$ and $s(C_0)=-$ then
        $$e(\alpha)e(\overline\beta)=e_{C_1}I=Ie_{C_1}=e(\beta)e(\overline\alpha).$$
    \item Suppose $C_1$ is free, $C_2$ is cleaved, and $C_3$ is free. If $s(C_1)=s(C_0)=+$ then there are two possibilities for $(r',s')$. If $s'(C_1\#C_2)=+$ and $s'(C_3)=-$ then
        $$e(\alpha)e(\overline\beta)=I\cdot I=e(\beta)e(\overline\alpha).$$ If $s'(C_1\#C_2)=-$ and $s'(C_3)=+$ then
        $$e(\alpha)e(\overline\beta)=Ie_{C_1\# C_0}=e_{C_0}I=e(\beta)e(\overline\alpha).$$

        If $s(C_1)=-$ and $s(C_0)=+$ then
        $$e(\alpha)e(\overline\beta)=e_{C_0}I=Ie_{C_2}=e(\beta)e(\overline\alpha).$$

        If $s(C_1)=+$ and $s(C_0)=-$ then
        $$e(\alpha)e(\overline\beta)=I\cdot I=e(\beta)e(\overline\alpha).$$

    \item Suppose $C_1$ and $C_2$ are free and $C_3$ is cleaved. If $s(C_1)=s(C_0)=+$ then there are two possibilities for $(r',s')$. If $s'(C_1\#C_2)=+$ and $s'(C_3)=-$ then
        $$e(\alpha)e(\overline\beta)=Ie_{C_1\# C_0}=e_{C_0}I=e(\beta)e(\overline\alpha).$$ If $s'(C_1\#C_2)=-$ and $s'(C_3)=+$ then
        $$e(\alpha)e(\overline\beta)=I\cdot I=e(\beta)e(\overline\alpha).$$

        If $s(C_1)=-$ and $s(C_0)=+$ then
        $$e(\alpha)e(\overline\beta)=e_{C_0}I=e_{C_0}I=e(\beta)e(\overline\alpha).$$

        If $s(C_1)=+$ and $s(C_0)=-$ then
        $$e(\alpha)e(\overline\beta)=I\cdot I=e(\beta)e(\overline\alpha).$$

    \item If $C_1$, $C_2$, and $C_3$ are all free, then
    $$e(\alpha)e(\overline\beta)=I\cdot I=e(\beta)e(\overline\alpha).$$
\end{enumerate}

\textbf{(Merge, Merge, Fission, Fission)}

Suppose $\alpha$ and $\beta$ both merge the circles $C_1$ and $C_0$ and $\overline\alpha$ and $\overline\beta$ are both fissions of $C_1\#C_0$. Since the roles of $\alpha$ and $\beta$ are symmetric, either both paths exist or both paths do not exist, and we have $e(\alpha)e(\overline\beta)=e(\beta)e(\overline\alpha)$ whenever the paths exist.

\textbf{(Fission, Fission, Merge, Merge)}

Suppose $\alpha$ splits $C_0$ into two circles $C_1$ and $C_2$ such that $\overline\beta$ merges the two. Meanwhile, $\beta$ splits $C_0$ into two circles $C_3$ and $C_4$ such that $\overline\alpha$ merges the two. Both paths exist if $s(C_0)=+$ and neither path exists otherwise.

\begin{enumerate}
    \item If $C_1$, $C_2$, $C_3$, and $C_4$ are cleaved then there are four paths from $(r,s)$ to $(r',s')$ that give
        $$e_{\alpha_1}e_{\overline\beta_1}=e_{\alpha_2}e_{\overline\beta_2}=e_{\beta_3}e_{\overline\alpha_3}=e_{\beta_4}e_{\overline\beta_4}.$$
    \item If three of the circles are cleaved, without loss of generality $C_1$, $C_2$, and $C_3$, while the other, $C_4$, is free then there are four paths from $(r,s)$ to $(r',s')$, and notice that $\overline\beta=\alpha^\dagger$ so $\alpha,\beta$ are in $\overrightarrow{\U}.$ We get
        $$\overrightarrow{e}_{\alpha_1}\overrightarrow{e}_{\overline\beta_1}=\overrightarrow{e}_{\alpha_2}\overrightarrow{e}_{\overline\beta_2}=\overrightarrow{e}_{C_0}I=I\overrightarrow{e}_{C_3}.$$
    \item If two of the circles are cleaved, without loss of generality $C_1$ and $C_3$, while the other two, $C_2$ and $C_4$, are free (note that it is not possible for, say, $C_1$ and $C_2$ to be cleaved while $C_3$ and $C_4$ are free) then there are four paths that give
        $$e_{C_0}I=Ie_{C_1}=e_{C_0}I=Ie_{C_3}.$$
    \item It is impossible for only one of the circles $C_1,C_2,C_3$, and $C_4$ to be cleaved while the other three are free.
    \item If $C_1,C_2,C_3,$ and $C_4$ are all free, then there are four paths that each give $I\cdot I$.
\end{enumerate}

\textbf{(Fission, Fission, Fission, Fission)}

Suppose $\alpha$ and $\beta$ are both fissions of a circle $C_0$ such that after surgery on $\alpha$ (resp. $\beta$), $\overline\beta$ (resp. $\overline\alpha$) is still a fission. Let $\alpha$ divide $C_0$ into $C_1$ and $C_0'$, $\overline\beta$ divides $C_0'$ into $C_2$ and $C_3$, $\beta$ divides $C_0$ into $C_0''$ and $C_3$, and $\overline\alpha$ divides $C_0''$ into $C_1$ and $C_2$. The two paths $\alpha,\overline\beta$ and $\beta,\overline\alpha$ both divide $C_0$ into three distinct circles $C_1$, $C_2$, and $C_3$. If $s(C_0)=+$, then there are two paths to each $(r',s')$ where $s'$ assigns two of the circles $C_1$, $C_2$, and $C_3$ to $-$ and the other to $+$. If $s(C_0)=-$, then $s'(C_1)=s'(C_2)=s'(C_3)=-$ and there are two paths.

\begin{enumerate}
    \item If $C_1,C_2$, and $C_3$ are all cleaved, then whenever the paths exist, we have
    $$e(\alpha)e(\overline\beta)=e_\alpha e_{\overline\beta}=e_\beta e_{\overline\alpha}=e(\beta)e(\overline\alpha).$$
    \item Suppose $C_1$ and $C_2$ are cleaved but $C_3$ is free. If $s(C_0)=+$, then there are three possibilities for $(r',s')$ as in case (a). If $s'(C_1)=+$ or $s'(C_2)=+$, then we have
        $$e(\alpha)e(\overline\beta)=e_\alpha I=Ie_{\overline\alpha}=e(\beta)e(\overline\alpha).$$ If $s'(C_3)=+$ then
        $e(\alpha)e(\overline\beta)=e_\alpha e_{C_0'}$, $e(\beta)e(\overline\alpha)=e_{C_0}e_{\overline\alpha}$, and either
        $$e_\alpha\overrightarrow{e}_{C_0'}=\overrightarrow{e}_{C_0}e_{\overline\alpha}$$ if $\beta$ is in $\overrightarrow{\U}$ or
        $$e_\alpha\overleftarrow{e}_{C_0'}+e_\alpha\overleftarrow{e}_{C_1}+\overleftarrow{e}_{C_0}e_{\overline\alpha}=0$$ if $\beta$ is in $\overleftarrow{\U}$.

        If $s(C_0)=-$ then we have
        $$e(\alpha)e(\overline\beta)=e_\alpha I=Ie_{\overline\alpha}=e(\beta)e(\overline\alpha).$$

    \item Suppose $C_1$ is cleaved, $C_2$ is free, and $C_3$ is cleaved, then $\alpha=\beta$ as bridges. If $s(C_0)=+$, then there are three possibilities for $(r',s')$ as in case (a). If $s'(C_1)=+$ or $s'(C_3)=+$ then
        $$e(\alpha)e(\overline\beta)=e_\alpha I=e_\beta I=e(\beta)e(\overline\alpha).$$ If $s'(C_2)=+$ then
        $e(\alpha)e(\overline\beta)=e_\alpha e_{C_0'}$ and $e(\beta)e(\overline\alpha)=e_\beta e_{C_0''}$. Note that $\alpha$ and $\beta$ are on the same side: they are either both in $\overrightarrow{\U}$ or both in $\overleftarrow{\U}$. In the first case,
        $$e_\alpha \overrightarrow{e}_{C_0'}=e_\beta \overrightarrow{e}_{C_0''},$$ and in the latter,
        $$e_\alpha \overleftarrow{e}_{C_0'}+e_\beta \overleftarrow{e}_{C_0''}+\overleftarrow{e}_{C_0}e_{\overline\alpha}=0.$$

        If $s(C_0)=-$, then
        $$e(\alpha)e(\overline\beta)=e_\alpha I=e_\beta I=e(\beta)e(\overline\alpha).$$
    \item The case where $C_1$ is free and $C_2$ and $C_3$ are cleaved is symmetric to case (2).
    \item Suppose $C_1$ is cleaved and $C_2$ and $C_3$ are free. If $s(C_0)=+$, then there are three possibilities for $(r',s')$ as in case (a). If $s'(C_1)=+$ then
        $$e(\alpha)e(\overline\beta)=I\cdot I=e(\beta)e(\overline\alpha).$$ If $s'(C_2)=+$ then
        $$e(\alpha)e(\overline\beta)=e_{C_0}I=Ie_{C_0''}=e(\beta)e(\overline\alpha).$$ If $s'(C_3)=+$ then
        $$e(\alpha)e(\overline\beta)=e_{C_0}I=e_{C_0}I=e(\beta)e(\overline\alpha).$$

        If $s(C_0)=-$ then
        $$e(\alpha)e(\overline\beta)=I\cdot I=e(\beta)e(\overline\alpha).$$

    \item Suppose $C_1$ is free, $C_2$ is cleaved, and $C_3$ is free. If $s(C_0)=+$, then there are three possibilities for $(r',s')$ as in case (a). If $s'(C_1)=+$ then
        $$e(\alpha)e(\overline\beta)=e_{C_0}I=Ie_{C_0''}=e(\beta)e(\overline\alpha).$$ If $s'(C_2)=+$ then
        $$e(\alpha)e(\overline\beta)=I\cdot I=e(\beta)e(\overline\alpha).$$ If $s'(C_3)=+$ then
        $$e(\alpha)e(\overline\beta)=Ie_{C_0'}=e_{C_0}I=e(\beta)e(\overline\alpha).$$

        If $s(C_0)=-$ then
        $$e(\alpha)e(\overline\beta)=I\cdot I=e(\beta)e(\overline\alpha).$$
    \item The case where $C_1$ and $C_2$ are cleaved and $C_3$ is free is symmetric to case (5).
    \item If $C_1$, $C_2$ and $C_3$ are all free then whenever the paths exist, we have
    $$e(\alpha)e(\overline\beta)=I\cdot I=e(\beta)e(\overline\alpha).$$
\end{enumerate}

The cases involving 1-1 bifurcations all have a 1-1 bifurcation in both paths $\alpha,\overline\beta$ and $\beta,\overline\alpha$, hence do not give nonzero paths, except for the case (Fission, Bifurcation, Bifurcation, Merge).

\textbf{(Fission, Bifurcation, Bifurcation, Merge)}

Suppose $\alpha$ divides $C_0$ into $C_1$ and $C_2$ such that $\overline\beta$ merges $C_1$ and $C_2$ to $C_3$, while $\beta$ is a 1-1 bifurcation of $C_0$ to $C_0'$ and $\overline\alpha$ is a 1-1 bifurcation of $C_0'$ to $C_3$. There are two nonzero paths from $(r,s)$ to $(r',s')$, both from $\alpha,\overline\beta$, if $s(C_0)=+$. Otherwise, there are no nonzero paths. 

\begin{enumerate}
    \item Suppose $C_1$ and $C_2$ are cleaved, then $e(\alpha_i)e(\beta_i)=e_{\alpha_i}e_{\overline\beta_i}$. We have
    $$e_{\alpha_1}e_{\overline\beta_1}+e_{\alpha_2}e_{\overline\beta_2}=0$$ from Relation~\ref{relation-1-1-bifurcation}.
    \item Suppose one of $C_1$ and $C_2$ is cleaved and the other is free, say $C_1$ is cleaved and $C_2$ is free. We have $e(\alpha_1)e(\overline\beta_1)=Ie_{C_1}$, $e(\alpha_2)e(\overline\beta_2)=e_{C_0}I$, and
        $$I\overrightarrow{e}_{C_1}=\overrightarrow{e}_{C_0}I.$$

    \item If $C_1$ and $C_2$ are both free, then both paths give $I\cdot I$.
\end{enumerate}

\end{proof}

\subsection{The APS Differential}

\begin{definition}
For $(r,s)\in\STATE(\overrightarrow{T})$, we define
$$\overrightarrow{d}_{\APS}(r,s)=\sum_{\gamma\in\overrightarrow{\ACT}(r)}D_{\gamma}(r,s),$$ where $D_\gamma$ is a $(1,0)$-bigraded linear map defined by:
\begin{enumerate}
    \item If surgery on $\gamma$ merges the free circles $C_1$ and $C_2$ to get a free circle $C$ with $s(C_1)=+$ or $s(C_2)=+$, then $D_\gamma(r,s)=(r_\gamma,s_\gamma)$, where $s_\gamma(C)$ is equal to
    \begin{itemize}
        \item $+$ if $s(C_1)=s(C_2)=+$,
        \item $-$ if $s(C_1)=-s(C_2)$.
    \end{itemize}
    \item If surgery on $\gamma$ splits the free circle $C$ into $C_1$ and $C_2$, and $s(C)=+$, then $D_\gamma(r,s)$ is equal to
    \begin{itemize}
        \item $(r_\gamma,s_1)+(r_\gamma,s_2)$, where $s_i(C_i)=+$, $s_i(C_{i+1})=-$, if $s(C)=+$, 
        \item $(r_\gamma,s_\gamma)$, where $s_\gamma(C_1)=s_\gamma(C_2)=-$, if $s(C)=-$.
    \end{itemize}
    \item Suppose $\gamma$ has both feet on the same arc of a cleaved circle $C$ in $\overrightarrow{\U}$ such that surgery on $\gamma$ splits $C$ into a free circle component $D$ and $C'$ a cleaved circle. Then $D_\gamma(r,s)=(r_\gamma,s_\gamma)$, where $s_\gamma(D)=-$ and $s_\gamma(C')=s(C)$.
    \item If $\gamma$ has one foot on a cleaved circle $C$ and the other foot on a free circle $D$, then surgery on $\gamma$ will merge $D$ into $C$ to get a cleaved circle $C'$, leaving the other circles unchanged. If $s(D)=+$, then $D_\gamma(r,s)=(r_\gamma,s_\gamma)$ with $s_\gamma(C')=s(C)$ , while if $s(D)=-$, then $D_\gamma(r,s)=0$.
    \item In all other cases, $D_\gamma(r,s)=0$.
\end{enumerate}
\end{definition}

In other words, 
$$\overrightarrow{d}_{\APS}(r,s)=\sum_{\gamma=(\gamma,s,s_\gamma):e(\gamma)=I}(r_\gamma,s_\gamma).$$

Note that since we are working over $\Z/2\Z$-coefficients, we do not need to specify an ordering of the crossings.

\begin{proposition}
The map $\overrightarrow{d}_{\APS}$ is $(1,0)$-bigraded.
\end{proposition}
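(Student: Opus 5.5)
Since $\overrightarrow{d}_{\APS}$ is a sum of the maps $D_\gamma$ over $\gamma\in\overrightarrow{\ACT}(r)$, it suffices to check that each nonzero term $(r_\gamma,s_\gamma)$ of $D_\gamma(r,s)$ sits in bigrading exactly $(1,0)$ higher than $(r,s)$. Recall that $(r,s)$ has bigrading $(h(r)-n_-,\,h(r)+q(r,s)+\tfrac12\iota(r,s)+n_+-2n_-)$. The counts $n_\pm$ depend only on the diagram $\overrightarrow{T}$, so they cancel in the differences. Every $\gamma\in\overrightarrow{\ACT}(r)$ has $\rho(\gamma)=0$, and surgery changes this to $1$. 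Hence $h(r_\gamma)=h(r)+1$, which gives the homological shift of $+1$ and contributes $+1$ to the quantum grading. It remains to show that in every nonzero case
$$q(r_\gamma,s_\gamma)+\tfrac12\iota(r_\gamma,s_\gamma)=q(r,s)+\tfrac12\iota(r,s)-1.$$

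I would go through the four nonzero cases of the definition of $D_\gamma$, reading $s(C)=\pm$ as $\pm1$ in $q$.

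\textbf{Case (1), merge of free circles.} The boundary is unchanged, so $\iota$ is unchanged. The free contribution goes from $(+,+)$ to $+$, i.e. $2\to1$, or from $(+,-)$ to $-$, i.e. $0\to-1$. Either way $q$ drops by $1$.

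\textbf{Case (2), split of a free circle.} Again $\iota$ is unchanged. The free contribution goes from $+$ to $(+,-)$, i.e. $1\to0$, or from $-$ to $(-,-)$, i.e. $-1\to-2$. So $q$ drops by $1$.

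\textbf{Case (3), split off a free circle.} A free circle $D$ with $s_\gamma(D)=-$ is created, so $q$ drops by $1$. The cleaved circle $C$ is replaced by $C'$ with the same sign, and the boundary $\partial(r_\gamma,s_\gamma)$ is a decorated cleaved link with the same sign count, so $\iota$ is unchanged.

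\textbf{Case (4), merge of a free circle into a cleaved one.} A free circle $D$ with $s(D)=+$ is absorbed, so $q$ drops by $1$. The cleaved circle keeps its sign, so $\iota$ is unchanged.

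The main point needing care, more a subtlety than an obstacle, is that $\iota$ depends only on the signed count of cleaved circles and not on the underlying cleaved link. This matters because in cases (3) and (4) the cleaved link itself may change up to isotopy. In each such case the number of cleaved circles and their signs are preserved, so $\iota$ is unchanged. I would also note that the 1-1 bifurcation and cleaved-cleaved cases fall under clause (5), where $D_\gamma=0$, so they impose no condition. Finally, the case split is the same as the characterization $e(\gamma)=I$: the nonzero terms of $D_\gamma$ are exactly the $\INT$ moves, which fix $\partial(r,s)$ and lower $q$ by one.
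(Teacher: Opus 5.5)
Your proof is correct and follows the same argument as the paper. The homological grading rises by one because each active arc's crossing passes from the $0$- to the $1$-resolution. In every nonzero case of $D_\gamma$, the free-circle term $q$ drops by exactly one while the signs of the cleaved circles, and hence $\iota$, are unchanged, so the quantum grading is preserved. Your explicit case check and your remark that $\iota$ depends only on the signed count of cleaved circles spell out what the paper compresses into ``there is no other change in circles.''
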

\begin{proof}
Since $\overrightarrow{d}_{\APS}$ is always a surgery on an active arc, which changes a crossing of the resolution from the $0$-smoothing to the $1$-smoothing, the homological grading changes by $+1$. 

For the quantum grading, observe that in all cases for $D_\gamma$, the states in $D_\gamma(r,s)$ either have one fewer plus-decorated free circle or one additional minus-decorated free circle compared to $(r,s)$, and there is no other change in circles. Hence the quantum grading is preserved.
\end{proof}

\begin{theorem}\label{thm:aps-differential}
$\overrightarrow{d}_{\APS}^2=0$.
\end{theorem}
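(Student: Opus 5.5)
We expand $\overrightarrow{d}_{\APS}^2(r,s)$ as a sum over ordered pairs of distinct active bridges. By the reformulation after the definition, $\overrightarrow{d}_{\APS}(r,s)=\sum (r_\gamma,s_\gamma)$, where the sum runs over decorated active bridges $\gamma=(\gamma,s,s_\gamma)$ with $e(\gamma)=I$. So
$$\overrightarrow{d}_{\APS}^2(r,s)=\sum_{(\alpha,\overline\beta)}(r_{\alpha,\beta},s_{\alpha,\beta}),$$
the sum running over length two paths $(r,s)\xrightarrow{\alpha}(r_\alpha,s_\alpha)\xrightarrow{\overline\beta}(r_{\alpha,\beta},s_{\alpha,\beta})$ with $\alpha,\beta\in\overrightarrow{\ACT}(r)$ distinct crossings and $e(\alpha)=e(\overline\beta)=I$. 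Since $\overline\beta$ must be active in $r_\alpha$, $\alpha\neq\beta$, and both are resolution bridges in $\overrightarrow{\U}$. Hence the hypothesis ``at least one of $\alpha,\beta$ is in $\overrightarrow{\Br}(r)$'' of Theorem~\ref{thm:existence-squares} is automatic. It suffices to show that for each fixed target state $(r',s')$, the number of such paths is even. We group the paths by the unordered pair $\{\alpha,\beta\}$ and by the target $(r',s')$.

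First dispose of 1-1 bifurcations. If $\alpha$ or $\overline\beta$ is a 1-1 bifurcation, there is no bridge edge and $D_\gamma=0$, since the APS differential has no term for 1-1 bifurcations (case (5) of the definition). So only paths with no bifurcation step contribute. Suppose such a path $\alpha,\overline\beta$ exists with $e(\alpha)e(\overline\beta)=I\neq0$.
\begin{itemize}
\item If $\beta$ is not a 1-1 bifurcation, Theorem~\ref{thm:existence-squares} applies. Parts (1)--(3) say that the paths from $(r,s)$ to $(r',s')$ through $\alpha$ first number one or two, and likewise through $\beta$ first, and that the corresponding algebra elements are equal or related by Relations~\ref{relation-lefteCmerge}, \ref{relation-lefteCdivide}, or \ref{relation-1-1-bifurcation}.
\item If $\beta$ is a 1-1 bifurcation, we swap the roles of $\alpha$ and $\beta$ and use Lemma~\ref{lem:bif-case-iii} and Lemma~\ref{lem:bif-case-iv}. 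These leave only the (Fission, Bifurcation, Bifurcation, Merge) configuration.
\end{itemize}

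The key step is to check that every path in the sum has $e(\cdot)=I$ and is matched with a partner. We go through the case analysis in the proof of Theorem~\ref{thm:existence-squares} and pick out the subcases where both sides equal $I\cdot I$.
\begin{itemize}
\item In the ``unique'' subcases, the $\alpha$-first path pairs with the $\beta$-first path. This gives two terms, which cancel mod $2$.
\item In the ``two choices'' subcases, there are four paths, or two paths in the special configuration. Here we must confirm that all of them have trivial $e$ exactly when one does. This holds because $e$ depends only on whether the cleaved link or its decoration changes, and that is determined by $(r,s)$ and $(r',s')$.
\item In the (Fission, Bifurcation, Bifurcation, Merge) subcase with both $C_1,C_2$ free, the two paths $\alpha_1,\overline\beta_1$ and $\alpha_2,\overline\beta_2$ both give $I\cdot I$ and cancel. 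This is the new phenomenon relative to $S^3$, and it is where the choice of zero differential on 1-1 bifurcations enters.
\end{itemize}

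The main obstacle is mixed subcases, where one path has $e=I$ and its partner apparently does not. An example is a relation $I\overrightarrow{e}_{C_1}=\overrightarrow{e}_{C_0}I$, where neither side is $I$ and so nothing enters $d_{\APS}^2$. We must verify that equality of path-endpoints forces equal $e$-types. Since $\partial(r',s')$ is fixed, the product $e(\alpha)e(\overline\beta)$ is an element of $\Hom(\partial(r,s),\partial(r',s'))$. When it equals $I$, we have $\partial(r,s)=\partial(r',s')$, so every partner product is also a product of two identities. Nonidentity edges strictly decrease $\iota$, and the quiver has no cycles. Hence within each group, the $I\cdot I$ paths are exactly the full group of two or four paths, and their sum vanishes mod $2$.
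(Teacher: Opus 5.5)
Your proposal is correct and takes essentially the paper's route: the paper also deduces $\overrightarrow{d}_{\APS}^2=0$ directly from the case analysis in the proof of Theorem~\ref{thm:existence-squares}, pairing each $I\cdot I$ path either with its $\beta$-first partner or with its second choice of decorations. Your $\iota$-monotonicity argument (every path to a given target is $I\cdot I$ as soon as one is) and your treatment of the case where $\beta$ is a 1-1 bifurcation via Lemma~\ref{lem:bif-case-iii} supply details that the paper's one-line proof leaves implicit.
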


The proof that $\overrightarrow{d}_{\APS}$ is a differential on $\llbracket \overrightarrow{T}\rrangle$ is contained in the proof of Theorem~\ref{thm:existence-squares}: any pair of bridges $\alpha,\overline\beta$ with $e(\alpha)e(\overline\beta)=I$ either cancels with the pair $\beta,\overline\alpha$ or there are two choices of decorations for $\alpha,\overline\beta$ with $e(\alpha_1)e(\overline\beta_1)=e(\alpha_2)e(\overline\beta_2)=I$. 

Note that $\overrightarrow{d}_{\APS}$ on a state does not change the set of cleaved circles or their decorations, so $\overrightarrow{d}_{\APS}$ splits along the direct sum definition to give a differential on $\overrightarrow{CK}(\overrightarrow{T},L,\sigma)$ for each $(L,\sigma)\in\CC\LL_n$.

There is a similar definition of a $(1,0)$-differential $\overleftarrow{d}_{\APS}$ on $\llangle\overleftarrow{T}\rrbracket$ for a left tangle $\overleftarrow{T}$.

\section{Type D Structure}

Given a diagram $T$ for a right tangle $\TT$, we define a left differential map $$\overrightarrow{\delta}_T:\llbracket T\rrangle\to\BB\Gamma_n(M,p)\otimes_\II\llbracket T\rrangle[(-1,0)]$$ by
\begin{align*}
    \overrightarrow{\delta}_T(r,s) &= \sum_{(\alpha,s,s_\alpha)\in\BRIDGE(r,s)} e(\alpha)\otimes (r_\alpha,s_\alpha)+\sum_{C\in\CIR(\partial(r,s)) s(C)=+}\overleftarrow{e}_C\otimes(r,s_C).
\end{align*}

\begin{proposition}\cite[Proposition 44]{roberts-type-d} The map $\overrightarrow{\delta}_T$ is grading preserving.
\end{proposition}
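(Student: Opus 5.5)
The proof is a term-by-term check. Since $\overrightarrow{\delta}_T$ is $\Z/2\Z$-linear and the target carries the tensor product bigrading, shifted by $(-1,0)$, it suffices to show the following. For each state $(r,s)$ and each term $a\otimes(r',s')$ of $\overrightarrow{\delta}_T(r,s)$, the bigrading of $a$ plus that of $(r',s')$, shifted by $(-1,0)$, equals the bigrading of $(r,s)$. Recall that $(r,s)$ sits in bigrading $(h(r)-n_-,\ h(r)+q(r,s)+\tfrac12\iota(r,s)+n_+-2n_-)$, and that $n_\pm$ do not change along any term. So I only need to track $\Delta h$, $\Delta q$ and $\Delta\iota$ against the bigrading of the algebra element. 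The terms split into four kinds according to $e(\alpha)$: $I_{(L,\sigma)}$ for $\alpha\in\INT(r,s)$; a decoration edge $\overrightarrow{e}_C$ or $\overleftarrow{e}_C$ for $\alpha\in\DEC(r,s,C)$; a bridge edge $e_\alpha$ for $\alpha\in R(r)$; and the extra terms $\overleftarrow{e}_C\otimes(r,s_C)$.

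\textbf{Homological grading.} A right resolution bridge $\alpha\in\overrightarrow{\Br}(r)$ has $\Delta h=+1$ if it is active and $-1$ if it is inactive, and its algebra element has $\overleftarrow{l}=0$. The shift $[(-1,0)]$ makes the active terms work. A left bridge $\alpha\in\overleftarrow{\Br}(L)$ has $\Delta h=0$ and algebra element with $\overleftarrow{l}=1$, which cancels the shift. The terms $\overleftarrow{e}_C\otimes(r,s_C)$ are handled the same way.

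The inactive right bridges need care, and I expect this bookkeeping to be the main obstacle. I would first check how the sum over $\BRIDGE(r,s)$ is meant to be read, following \cite[Proposition 44]{roberts-type-d}. Only active right bridges (and left bridges) should contribute; the reversed surgeries $\gamma^\dagger$ are the inactive bridges of $r_\gamma$, and they do not appear as terms. Equivalently, I would verify that the homological convention makes every term raise $h-\overleftarrow{l}$ by exactly one.

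\textbf{Quantum grading.} I would check it case by case from the definitions of $\INT$, $\DEC$ and $R$.
\begin{itemize}
\item For $e(\alpha)=I$: the surgery either removes a $+$ free circle or creates a $-$ free circle, so $\Delta q=-1$. Meanwhile $\Delta h=1$ and $\Delta\iota=0$, giving total change $0$, which matches $q(I)=0$. This is the same computation as the proposition that $\overrightarrow{d}_{\APS}$ is $(1,0)$-bigraded.
\item For $\alpha\in\overrightarrow{\DEC}(r,s,C)$: here $C$ changes from $+$ to $-$, so $\Delta\iota=-2$. The free circle data is unchanged in count, $q$ is unchanged net, and $\Delta h=1$. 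The total change is $1-1=0$, but the algebra element $\overrightarrow{e}_C$ has $q=-1$, so I need $\Delta(\text{module}\ q)=+1$. I would recompute this carefully, since the precise sign conventions (whether the target grading is $q(a)+q(r',s')$) determine which identity must hold. The expected outcome is $q(a)=\Delta\bigl(h+q+\tfrac12\iota\bigr)$ up to the $\Delta h$ absorbed by the shift, mirroring Roberts.
\item For $\alpha\in R(r)$: a merge or split of cleaved circles with an allowed decoration change has $\Delta\iota=-1$, consistent with the $\iota$-decreasing computation in the finiteness proposition. Then $\tfrac12\Delta\iota=-\tfrac12$, which matches $q(\overrightarrow{e}_\gamma)=-\tfrac12$ together with the active $\Delta h$ combined with the shift. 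The left case uses $q(\overleftarrow{e}_\gamma)=\tfrac12$ with $\Delta h=0$.
\item The extra terms $\overleftarrow{e}_C$: here $\Delta\iota=-2$ and the decoration edge has $q=1$.
\end{itemize}

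Finally, I would note that the new 1-1 bifurcation phenomenon contributes no terms, since there are no edges for 1-1 bifurcations. Hence the verification reduces exactly to Roberts' cases, and the argument is identical to \cite[Proposition 44]{roberts-type-d}.
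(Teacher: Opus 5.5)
\textbf{Gap in the quantum-grading check.} Your overall strategy matches the paper's: a term-by-term check sorted by the type of $e(\alpha)$, with only active right bridges contributing. Your homological-grading argument is also fine. The problem is the quantum grading, which is the real content of this proposition, and as written your check does not close.

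\emph{The role of $\Delta h$.} The shift $[(-1,0)]$ acts only on the homological grading. But $h(r)$ also appears in the quantum grading, so for an active right bridge the $\Delta h=+1$ must be counted in the quantum change, not ``absorbed by the shift''. For each term $a\otimes(r',s')$ the identity to verify is $q(a)=-\Delta\bigl(h+q+\tfrac12\iota\bigr)$, not $q(a)=\Delta(\cdots)$. Your remark that ``$\tfrac12\Delta\iota=-\tfrac12$ matches $q(\overrightarrow{e}_\gamma)=-\tfrac12$'' gives the right number only by coincidence. Under that convention the left cases $q(\overleftarrow{e}_\gamma)=\tfrac12$ and $q(\overleftarrow{e}_C)=1$ would fail. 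You also use the opposite bookkeeping (counting $\Delta h$) in the $\INT$ case, so your convention is not applied consistently.

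\emph{The $\overrightarrow{\DEC}$ case.} Here $\Delta q\neq 0$. A merge of a $+$ cleaved circle with a $-$ free circle deletes a $-$ free circle. A split of a $+$ cleaved circle into a $-$ cleaved $C'$ and a free $D$ forces $s(D)=+$ by the split rule. Either way $\Delta q=+1$. This miscount is exactly why you found a mismatch and left the case unresolved.

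\textbf{With these corrections every case checks.}
\begin{itemize}
\item $\INT$: $\Delta=1-1+0=0=-q(I)$.
\item $\overrightarrow{\DEC}$: $\Delta=1+1-1=1=-q(\overrightarrow{e}_C)$.
\item A right bridge in $R(r)$: $\Delta=1+0-\tfrac12=\tfrac12=-q(\overrightarrow{e}_\gamma)$.
\item A left bridge: $\Delta=0+0-\tfrac12=-q(\overleftarrow{e}_\gamma)$.
\item The extra terms $\overleftarrow{e}_C\otimes(r,s_C)$: $\Delta=0+0-1=-q(\overleftarrow{e}_C)$.
\end{itemize}
This is precisely the bookkeeping the paper's proof records: one fewer $+$ or one more $-$ cleaved circle for bridge edges; one fewer $-$ or one more $+$ free circle for $\overrightarrow{e}_C$; and a $0\to1$ resolution change only for right bridges.
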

\begin{proof}
The proof of $\overrightarrow{\delta}_T$ being grading preserving is similar to the proof in \cite{roberts-type-d}.

For terms of $\overrightarrow{\delta}_T(r,s)$ with $e(\alpha)=I$, we know that the APS differential has bigrading $(1,0)$, and the bigrading of $I$ is $(0,0)$.

If $e(\alpha)=e_\alpha$, then in all cases, either $(r_\alpha,s_\alpha)$ has one fewer $+$-decorated cleaved circle compared to $(r,s)$, or it has one additional $-$-decorated cleaved circle compared to $(r,s)$. For right bridges, there is a change in the resolution of a crossing from $0$ to $1$, while for left bridges, there is no change in resolution. The bigrading of $\overrightarrow{e}_\alpha$ is $(0,-1/2)$ and the bigrading of $\overleftarrow{e}_\alpha$ is $(1,1/2)$.

If $e(\alpha)=\overrightarrow{e}_C$, then one $+$-decorated cleaved circle in $(r,s)$ is changed to have $-$ decoration, while $(r_\alpha,s_\alpha)$ has either one fewer $-$-decorated free circle or one additional $+$-decorated free circle. There is also a change in resolution of a crossing from $0$ to $1$. The bigrading of $\overrightarrow{e}_C$ is $(0,-1)$.

Lastly, $(r,s_C)$ for $C$ a cleaved circle with $s(C)=+$ changes a $+$-decorated cleaved circle to have $-$ decoration, there is no change in resolution, and the bigrading of $\overleftarrow{e}_C$ is $(1,1)$. Therefore, $\overrightarrow{\delta}_T$ is grading preserving.
\end{proof}

\begin{theorem}\label{thm:type-D-relation}
The map $\overrightarrow{\delta}_T$ satisfies the relation
\begin{align}\label{eq:type-D-rel}
(\mu_{\BB\Gamma_n}\otimes\I)(\I\otimes\overrightarrow{\delta}_T)\overrightarrow{\delta}_T+(d_{\BB\Gamma_n}\otimes\I)\overrightarrow{\delta}_T=0.
\end{align} Therefore, $(\llbracket T\rrangle,\overrightarrow{\delta}_T)$ is a type D structure.
\end{theorem}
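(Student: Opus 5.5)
We evaluate both terms of \eqref{eq:type-D-rel} on a fixed state $(r,s)$ with $\partial(r,s)=(L,\sigma)$. We then sort the resulting terms $a\otimes(r',s')$ by their target state $(r',s')$ and show that each coefficient vanishes in $\BB\Gamma_n$.

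\textbf{Expanding the quadratic term.} Writing $\overrightarrow{\delta}_T=\delta_{\Br}+\delta_{\overleftarrow{e}}$, where $\delta_{\Br}$ is the bridge sum and $\delta_{\overleftarrow{e}}$ is the sum of left decoration terms, the quadratic term $(\mu\otimes\I)(\I\otimes\overrightarrow{\delta}_T)\overrightarrow{\delta}_T$ splits into four pieces.

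\begin{enumerate}
\item[(a)] Pairs of bridges $\alpha,\overline\beta$ from $(r,s)$. This includes right resolution bridges, left bridges of $\overleftarrow{m}$, and pairs with $e=I$.
\item[(b)] $\overleftarrow{e}_C$ followed by a bridge.
\item[(c)] A bridge followed by $\overleftarrow{e}_{C'}$.
\item[(d)] $\overleftarrow{e}_C\overleftarrow{e}_{C'}$.
\end{enumerate}

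The linear term $(d_{\BB\Gamma_n}\otimes\I)\overrightarrow{\delta}_T$ is nonzero only on $\delta_{\overleftarrow{e}}$. There it gives $\sum_C\sum_{\gamma\in\overleftarrow{\Br}(L)}\overleftarrow{e}_\gamma\overleftarrow{e}_{\gamma^\dagger}\otimes(r,s_C)$.

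\textbf{Handling piece (a).} For (a) I would invoke Theorem~\ref{thm:existence-squares}. Group the two-step paths according to the unordered pair $\{\alpha,\beta\}$ and the endpoint $(r',s')$. In each group, parts (2) and (3) of that theorem give one of three outcomes: the two terms are equal in $\BB\Gamma_n$ and cancel mod $2$; they are two of the three terms of Relation~\ref{relation-lefteCmerge}/\ref{relation-lefteCdivide}; or they cancel via Relation~\ref{relation-1-1-bifurcation}. When they are two terms of a three-term relation, the third term has the form $e_\gamma\overleftarrow{e}_C$ or $\overleftarrow{e}_C e_\gamma$. I would match that third term to a term of (b) or (c), checking that the intermediate state is exactly $(r,s_C)$ or $(r_\gamma,s_\gamma)$.

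\textbf{Pairs not covered by the theorem.} The remaining pairs in (a) are left bridges $\alpha,\beta$ of $\overleftarrow{m}$ with $\beta\notin B_d(L,\alpha)$. These split into the following cases.

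\begin{itemize}
\item If $\beta\in B_o(L,\alpha)$, the bridge square relation \ref{relation-bridgesquare} applies directly.
\item If $\beta\in B_s(L,\alpha)$, the pair is a triple $(\alpha,\beta,\delta)$ with $\delta$ the slide. Relation~\ref{relation-bridge-same} makes the three two-step compositions sum to zero.
\item If $\overline\beta\in B_\pitchfork(L_\alpha,\alpha^\dagger)$, the term vanishes by Relation~\ref{relation-bridge-intersect}.
\item The term $\overline\beta=\alpha^\dagger$ lands back on $r$ with decoration $s_C$. It contributes $\overleftarrow{e}_\alpha\overleftarrow{e}_{\alpha^\dagger}\otimes(r,s_C)$, which cancels exactly the differential term $d(\overleftarrow{e}_C)$.
\end{itemize}

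For right bridges, the analogous return path gives $\overrightarrow{e}_C$ by Relation~\ref{relation-thereandback}. That term has to be cancelled separately, either by the $\overrightarrow{\DEC}$ term or by the Fission/Merge case of the theorem.

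\textbf{Pieces (b), (c), (d).} Piece (d) cancels by Relation~\ref{relation-eCeC}. For $C=C'$ it is simply absent, since $\overleftarrow{e}_C$ requires $\sigma(C)=+$. Pieces (b) and (c) whose bridge support is disjoint from $C$ cancel pairwise by \ref{relation-eCbridge}. The remaining ones are exactly the third terms left over from (a), together with the terms from bridges whose support meets $C$.

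\textbf{Main obstacle.} I expect the main obstacle to be the bookkeeping for the leftover terms. The issue is showing that each three-term relation is used exactly once, with its third term coming from a genuine term of (b) or (c) and not double-counted. This is most delicate where free circles and $\overleftarrow{\DEC}$ bridges produce $\overleftarrow{e}_C$ terms that coincide with the explicit decoration terms, and in the interaction of the left $\alpha^\dagger$ terms with $d_{\BB\Gamma_n}$. I would organize this check by the target state $(r',s')$ and by the cases listed in the proof of Theorem~\ref{thm:existence-squares}.
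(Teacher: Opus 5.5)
Your proposal follows essentially the same route as the paper. Both organize $\Xi$ by target state and dispose of every pair involving a right bridge, or a $B_d$ pair, via Theorem~\ref{thm:existence-squares}. Both handle the remaining left--left pairs ($B_o$, $B_s$, $B_\pitchfork$) and the decoration-edge terms by the corresponding algebra relations as in Roberts, and both cancel the left $\alpha,\alpha^\dagger$ pairs against $(d_{\BB\Gamma_n}\otimes\I)\overrightarrow{\delta}_T$.

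Your anticipated bookkeeping obstacle is milder than you expect. For a right tangle every left bridge joins arcs of cleaved circles, so $\overleftarrow{\DEC}(r,s)=\emptyset$ and the three-term outcome of Theorem~\ref{thm:existence-squares} never occurs in piece (a). Consequently, each instance of Relations~\ref{relation-lefteCmerge}/\ref{relation-lefteCdivide} arising in $\Xi$ has all three of its terms inside your pieces (b) and (c).
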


\begin{proof}[Proof of Theorem~\ref{thm:type-D-relation}]
Fix some generator $(r,s)$ of $\llbracket T\rrangle$ and consider the left hand side of Equation~\ref{eq:type-D-rel} applied to $(r,s)$. Let $\Xi=(\mu_{\BB\Gamma_n}\otimes\I)(\I\otimes\overrightarrow{\delta}_T)\overrightarrow{\delta}_T(r,s)$; $\Xi$ is a sum over all length-$2$ paths 
$$(r,s)\xrightarrow{A}(\overline r,\overline s)\xrightarrow{B}(r',s'),$$ where $A$ and $B$ are either changing the decoration on a cleaved circle from $+$ to $-$ or doing surgery along an arc. We show through casework that the coefficient of each target state $(r',s')$ in $\Xi$ is $0$ except when $(r',s')=(r,s_C)$ for some active circle $C$, and then those terms cancel with terms coming from $(d_{\BB\Gamma_n}\otimes\I)\overrightarrow{\delta}_T(r,s)$ of Equation~\ref{eq:type-D-rel}.

The cases that involve a pair of bridges $\alpha,\beta\in\BRIDGE(r)$ with at least one of $\alpha$ or $\beta$ in $\overrightarrow{\BRIDGE}(r)$ or $\alpha\in B_d(L,\beta)$ all cancel to zero by Theorem~\ref{thm:existence-squares}. 

The other cases are as follows, numbered as in the proof of \cite[Theorem 45]{roberts-type-d}:

\textbf{Cases i-v:} These are handled in the same way as in \cite{roberts-type-d}. When the arc $\gamma$ is a 1-1 bifurcation, there are no nonzero paths.

\textbf{Cases xii-xiv:} These are handled in the same way as in \cite{roberts-type-d}.

The remaining cases are when $\alpha\in\overleftarrow{\Br}(r)$ and $\beta=\alpha^\dagger\in\overleftarrow{\Br}(r_\alpha)$. Summing over the contribution of all such pairs with active circle $C\in\CIR(r)$ gives the term corresponding to $(r,s_C)$ in  $(d_{\BB\Gamma_n}\otimes\I)\overrightarrow{\delta}_T(r,s)$.

\end{proof}

\subsection{Invariance of the Type D Structure}

\begin{theorem}\label{thm:type-d-invariance}
Let $\overrightarrow{\TT}$ be a right tangle with diagram $\overrightarrow{T}$. The homotopy class of the type D structure $(\llbracket T\rrangle, \overrightarrow{\delta}_T)$ is an invariant of the tangle $\overrightarrow{\TT}$.
\end{theorem}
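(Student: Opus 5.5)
By the diagram-moves lemma of Section 2, any two diagrams of $\overrightarrow{\TT}$ are related by isotopies of the surfaces $F_i^\circ$, Reidemeister I, II and III moves, and finger moves, mirror moves and handleslides across the spheres $\partial D_j$ with $j\ne p$. Isotopies of $\overrightarrow{\U}$ that fix $\partial D_p$ do not change the set of states, the boundary map $\partial$, or the bridge classes up to canonical identification. So it is enough to build, for each elementary move $T\to T'$, a homotopy equivalence of type D structures $(\llbracket T\rrangle,\overrightarrow\delta_T)\simeq(\llbracket T'\rrangle,\overrightarrow\delta_{T'})$ over $(\BB\Gamma_n,d_{\BB\Gamma_n})$.

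\textbf{The moves supported away from the separating spheres.} For Reidemeister moves inside a disk, I would follow Roberts' proof for $\BB\Gamma_n$ and use cancellation (the type D version of Gaussian elimination). I split $\llbracket T'\rrangle$ along the resolutions of the new crossings. Inside the local disk there are new free circles, or new arcs on circles, whose surgeries give coefficients $e(\alpha)=I$: these are $\INT$ bridges. I would cancel those $I$-labelled arrows with the homological perturbation lemma, which applies because these arrows are isomorphisms in the identity coefficient. The zig-zag terms produced by cancellation are compositions $e(\alpha)e(\overline\beta)\cdots$, and I would identify them with the corresponding coefficients of $\overrightarrow\delta_T$ using Theorem~\ref{thm:existence-squares}. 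Bridges of $T'$ far from the disk commute with the local cancellations by the disjoint-support relations.

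The new feature compared with $S^3$ is that a circle passing through the local disk may be non-trivial in $\Sigma$, so some of its surgeries are 1-1 bifurcations. I would check that the cancelled generators are always the trivial small circles created by the move, for which merge and split behave as in $S^3$. Any 1-1 bifurcation that shows up in the perturbation terms then contributes zero, because there are no bifurcation edges, or it appears in the special configuration and cancels by Relation~\ref{relation-1-1-bifurcation}.

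\textbf{Moves across $\partial D_j$, $j\ne p$.} Every bridge and cleaved link is defined up to isotopy in $\Sigma$, and this isotopy may cross $\partial D_j$ for $j\ne p$. Matchings in $\overrightarrow{\U}$ are likewise considered up to isotopy of $\overrightarrow{\U}$. A finger move introduces no crossings, so it induces a bijection of resolutions and states that preserves the circle types (free or cleaved, trivial or bounding), the bridge classes and the gradings. This bijection is therefore an isomorphism of type D structures. The mirror move and the handleslide are handled the same way: the crossing $x$ and $x'$ correspond, and the $0$- and $1$-resolutions match. The main point to check is that the identification $\partial D_j^-\sim\partial D_j^+$ by reflection sends the $0$-resolution at $x$ to the $0$-resolution at $x'$, so that $h$, $n_\pm$ and the bigrading are preserved. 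The resolution diagrams are then isotopic in $\Sigma$, so the map is a strict isomorphism commuting with $\overrightarrow\delta$.

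\textbf{The main obstacle.} I expect the hardest step to be Reidemeister III, together with the verification that the perturbation-lemma terms match $\overrightarrow\delta_T$ when left decoration edges $\overleftarrow e_C$ are involved. There the relation for left edges is a three-term sum rather than a commutation, and 1-1 bifurcations can make extra squares appear or disappear. I would first attempt to deduce Reidemeister III from two Reidemeister II moves plus a cancellation, as in Roberts. I would then check explicitly that no new 1-1 bifurcation configurations are created among the local bridges, using Lemmas~\ref{lem:bif-case-iii} and~\ref{lem:bif-case-iv} to restrict where bifurcations can occur.
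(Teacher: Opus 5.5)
Your treatment of finger moves, mirror moves, and Reidemeister moves inside a disk matches the paper: state identifications for the first two, and Roberts-style cancellation of $I$-labelled arrows for the last. The handleslide step, however, fails.

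\textbf{The gap.} You say the handleslide is ``handled the same way'' as the mirror move, with a bijection of states giving a strict isomorphism. But a handleslide of an arc $a$ above or below $\partial D_i$ does not move an existing crossing across the sphere. In the diagram, $a$ is rerouted around $\partial D_i^{\pm}$, so it passes over (or under) every strand of $T$ that meets $\partial D_i$. If $T$ meets $\partial D_i$ in $2m$ points, then $T'$ has $2m$ more crossings than $T$, and $\llbracket T'\rrangle$ has $2^{2m}$ times as many resolutions as $\llbracket T\rrangle$. There is no correspondence $x\leftrightarrow x'$, no bijection of states, and no isomorphism. You need a homotopy equivalence that eliminates these $2m$ crossings, and your proposal does not supply one.

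\textbf{How the paper fills it.} First, handleslides over $\partial D_i^+$ are reduced to handleslides over $\partial D_i^-$ using finger and mirror moves. Then $T$ is encoded as a tangle diagram with nodes in a half-plane, where a disk $R_1$ stands for the part of $\overrightarrow{\U}$ beyond $\partial D_i$. The slid strand is pushed into $R_1$; these are finger and mirror moves, hence isomorphisms. It is then shrunk back inside $R_2$ by planar isotopies and Reidemeister moves of the nodal diagram. In $\Sigma$ these become either ordinary Reidemeister moves or non-local ``primed'' moves I$'$, II$'$, III$'$. Invariance under the primed moves holds because the partial cube of resolutions of the pictured crossings is the same as for the local moves.

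\textbf{Where your safeguard breaks.} Your plan to check ``that the cancelled generators are always the trivial small circles created by the move'' fails exactly in the primed moves. There, the surgeries used in the cancellations can involve circles that do not bound disks in $\Sigma$. The cancellation still works only because the paper's differential assigns the ordinary merge/split maps to non-trivial circles as well. The paper's Remark gives an example showing that with the APS maps the type D structure is not invariant under handleslides.

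So the missing ingredients are:
\begin{enumerate}
\item the decomposition of a handleslide into finger/mirror moves plus these non-local Reidemeister-type moves that cancel the $2m$ new crossings;
\item the check that the relevant surgeries are genuine merges or splits, not 1-1 bifurcations, and carry the standard maps even when the circles are essential.
\end{enumerate}
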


The theorem follows once we show that the homotopy class of $(\llbracket T\rrangle, \overrightarrow{\delta}_T)$ is invariant under Reidemeister moves I, II, and III, and finger moves, mirror moves, and handleslides through $\partial D_i$ for $i>p$.

\begin{proof}[Proof of invariance under finger and mirror moves]
The states and differentials before and after the finger move or mirror move are identified by pushing the circles and arcs along the isotopy given by the move. In the following two paragraphs, we make these identifications precise.

Suppose $a$ is an arc in $T$ and suppose we do a finger move where we push $a$ across $\partial D_i$, resulting in a tangle diagram $T'$ as in Figure~\ref{fig:finger-mirror}. There is an identification of states $\xi$ for $T$ and states $\xi'$ for $T'$: the circle in $\xi$ with the arc $a$ is identified with circle in $\xi'$ with the arcs $b,c,$ and $d$, all the other circles are unchanged, and the decorations also match. Clearly, this identification respects the bigrading. Furthermore, the active arcs for the states $\xi$ and $\xi'$ are identified, the bridges for the left matchings are unchanged, and the set of cleaved circles is unchanged, meaning the type D differentials $\overrightarrow{\delta}_T$ and $\overrightarrow{\delta}_{T'}$ are also identified. Therefore $(\llbracket T\rrangle,\overrightarrow{\delta}_T)$ is isomorphic to $(\llbracket T'\rrangle, \overrightarrow{\delta}_{T'})$ as bigraded type D structures over $\BB\Gamma_n$. 

\begin{figure}[h]
\centering
\def\svgwidth{.925\linewidth}
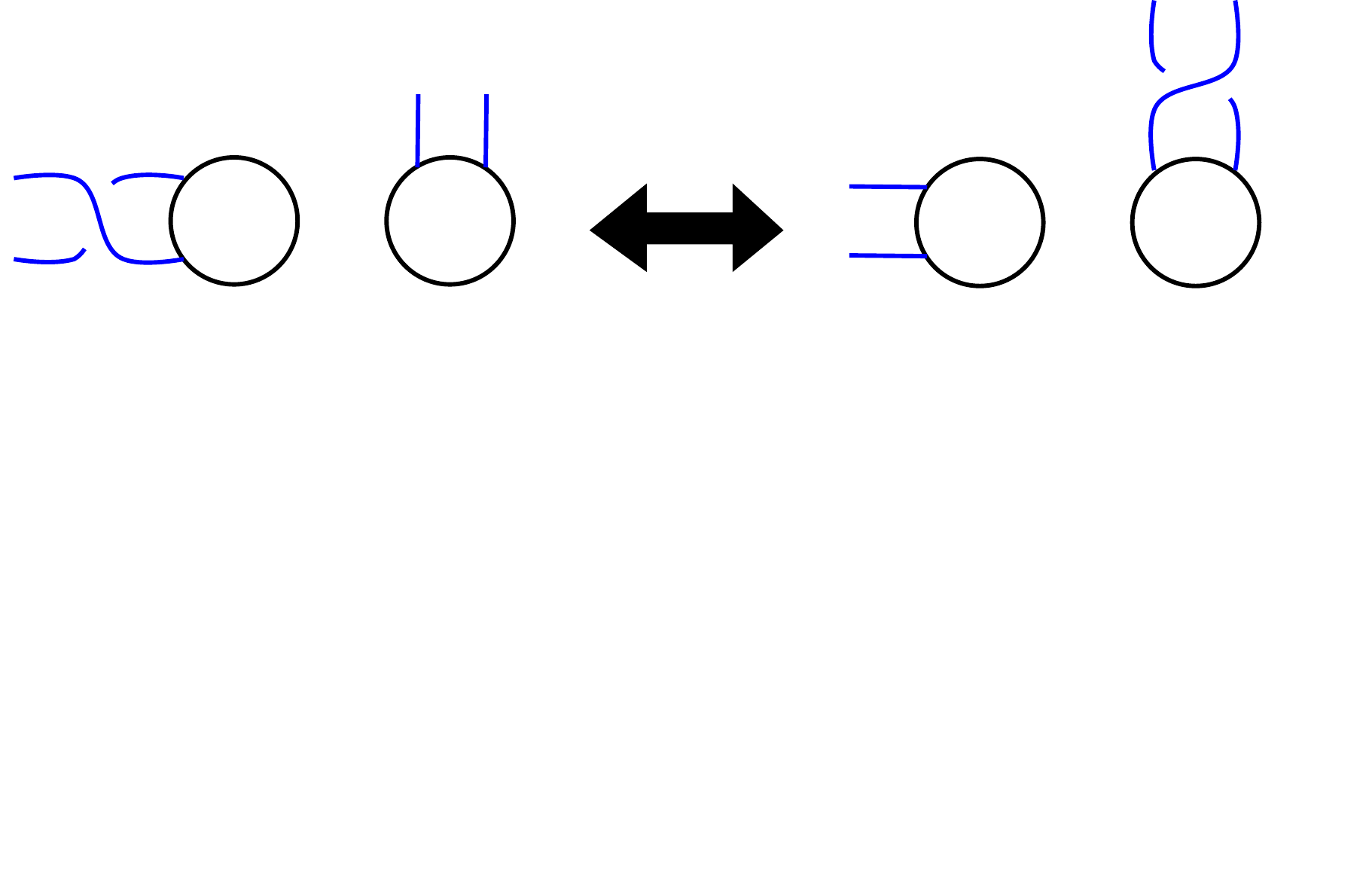
\caption{Top row: mirror move of crossing $x$. Middle row: the $0$-resolutions on both sides. Bottom row: the $1$-resolutions on both sides.}
\label{fig:mirror-states}
\end{figure}

Now suppose $x$ is a crossing in $T$ as in the top row of Figure~\ref{fig:mirror-states}, and suppose we do a mirror move where we push $x$ across $\partial D$ resulting in a diagram $T'$ with the crossing $x'$ that was pushed across the connect sum sphere. The states for $T$ and $T'$ are identified: fix some resolution $r=(\rho,\overleftarrow{m})$ of $T$, let $i\in\{0,1\}$ be the resolution of the crossing $x$ in $\rho$, and choose some decoration $s$ for $r$. Call this state $\xi_i\in \llbracket T \rrangle$. We identify $\xi_i$ with the state $\xi_i'$ which has the resolution $i$ for crossing $x'$, the same resolutions as $\xi$ for all the other crossings, and the same matching $\overleftarrow{m}$. In the $0$ resolutions, pictured in the middle row of Figure~\ref{fig:mirror-states} the circle containing arcs $a$ and $c$ in $\xi_0$ is identified with the circle containing arcs $a'$ and $c'$ in $\xi_0'$, the circle containing $b$ and $d$ is identified with the circle containing $b'$ and $d'$, and all the other circles are the same. In the $1$ resolutions, pictured in the bottom row of Figure~\ref{fig:mirror-states}, the circle containing arc $a$ in $\xi_1$ is identified with the circle containing arcs $a',b',$ and $c'$ in $\xi_1'$, the circle containing $b,c,$ and $d$ is identified with the circle containing $d'$, and all the other circles are the same. Clearly, this identification respects the bigrading. The active arcs for the states $\xi_i$ and $\xi_i'$ are identified: the arcs for the crossings that are not $x$ and $x'$ are the same, and the $0$-resolution arc for $\xi_0$ is identified with the $0$-resolution arc for $\xi_0'$. The left matchings are the same and the two states have the same set of cleaved circles and the circles have the same isotopy classes. Therefore the type D differentials $\overrightarrow{\delta}_T$ and $\overrightarrow{\delta}_{T'}$ are identified, and as a result, $(\llbracket T\rrangle,\overrightarrow{\delta}_T)$ is isomorphic to $(\llbracket T'\rrangle, \overrightarrow{\delta}_{T'})$ as bigraded type D structures over $\BB\Gamma_n$. The situation where the over and under strands of the crossing are changed is similar. 
\end{proof}

The proof of invariance under Reidemeister I, II, and III moves is largely the same as the proof of \cite[Lemma 48]{roberts-type-d}. The proofs rely on applying \cite[Proposition 40]{roberts-type-d} to cancel arrows in the type D differential that induce isomorphisms between the relevant states. The arrows come from the APS differential doing surgery on an active arc. The calculations are all the same as in \cite{roberts-type-d}; we only need to make sure that the surgeries in the arrows that we cancel induce isomorphisms.

\begin{proof}[Proof of Reidemeister I Invariance]
Suppose a local picture as in Figure~\ref{fig:r1-states} of a right-handed crossing $c$ that can be reduced by a Reidemeister I move appears in $T$. The states with $\rho(c)=0$ have an additional free circle $C$ with an active arc between $C$ and $D$. As in \cite{roberts-type-d}, we cancel the arrows in the APS differential that come from surgery on the $0$-resolution arc for $c$ where $C$ has a $+$ sign. By looking at the figure, it is clear that surgery on the $0$-resolution arc must be a merge of a free circle $C$ into $D$.

The argument follows as in \cite{roberts-type-d} that canceling the arrow gives zero perturbation term. 

\begin{figure}[h]
\centering
\includegraphics[width=0.6\textwidth]{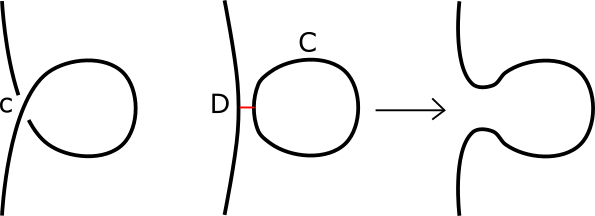}
\caption{Left: a right-handed crossing $c$ that can be reduced by a Reidemeister I move. Middle: the $0$-resolution on $c$. Right: the $1$-resolution on $c$.}
\label{fig:r1-states}
\end{figure}

As in \cite{roberts-type-d}, we obtain invariance for left-handed Reidemeister I moves once we show invariance for right-handed Reidemeister I moves and Reidemeister II moves.
\end{proof}

\begin{proof}[Proof of Reidemeister II Invariance]
Suppose a local picture as in Figure~\ref{fig:r2-states} of two crossings $c_0$ and $c_1$ that can be eliminated by a Reidemeister II move appears in $T$. As in \cite{roberts-type-d}, we cancel two types of arrows in the APS differential: 
\begin{enumerate}
    \item the first comes from surgery on the $0$-resolution arc for $c_0$ in the state where $\rho(c_0)=\rho(c_1)=0$. This splits off a free circle $C$, and we cancel the arrow that has as its target the state with $C$ having a $-$ decoration. 
    \item the second comes from surgery on the $0$-resolution arc for $c_1$ in the state where $\rho(c_0)=1$, $\rho(c_1)=0$, and the free circle $C$ has a $+$ decoration. Surgery on this arc merges the free circle $C$ with the bottom arc.
\end{enumerate}

\begin{figure}[h]
\centering
\def\svgwidth{.8\linewidth}
%% Creator: Inkscape 1.1 (c68e22c387, 2021-05-23), www.inkscape.org
%% PDF/EPS/PS + LaTeX output extension by Johan Engelen, 2010
%% Accompanies image file 'r2-states.pdf' (pdf, eps, ps)
%%
%% To include the image in your LaTeX document, write
%%   \input{<filename>.pdf_tex}
%%  instead of
%%   \includegraphics{<filename>.pdf}
%% To scale the image, write
%%   \def\svgwidth{<desired width>}
%%   \input{<filename>.pdf_tex}
%%  instead of
%%   \includegraphics[width=<desired width>]{<filename>.pdf}
%%
%% Images with a different path to the parent latex file can
%% be accessed with the `import' package (which may need to be
%% installed) using
%%   \usepackage{import}
%% in the preamble, and then including the image with
%%   \import{<path to file>}{<filename>.pdf_tex}
%% Alternatively, one can specify
%%   \graphicspath{{<path to file>/}}
%% 
%% For more information, please see info/svg-inkscape on CTAN:
%%   http://tug.ctan.org/tex-archive/info/svg-inkscape
%%
\begingroup%
  \makeatletter%
  \providecommand\color[2][]{%
    \errmessage{(Inkscape) Color is used for the text in Inkscape, but the package 'color.sty' is not loaded}%
    \renewcommand\color[2][]{}%
  }%
  \providecommand\transparent[1]{%
    \errmessage{(Inkscape) Transparency is used (non-zero) for the text in Inkscape, but the package 'transparent.sty' is not loaded}%
    \renewcommand\transparent[1]{}%
  }%
  \providecommand\rotatebox[2]{#2}%
  \newcommand*\fsize{\dimexpr\f@size pt\relax}%
  \newcommand*\lineheight[1]{\fontsize{\fsize}{#1\fsize}\selectfont}%
  \ifx\svgwidth\undefined%
    \setlength{\unitlength}{511.27122774bp}%
    \ifx\svgscale\undefined%
      \relax%
    \else%
      \setlength{\unitlength}{\unitlength * \real{\svgscale}}%
    \fi%
  \else%
    \setlength{\unitlength}{\svgwidth}%
  \fi%
  \global\let\svgwidth\undefined%
  \global\let\svgscale\undefined%
  \makeatother%
  \begin{picture}(1,0.91452808)%
    \lineheight{1}%
    \setlength\tabcolsep{0pt}%
    \put(0,0){\includegraphics[width=\unitlength,page=1]{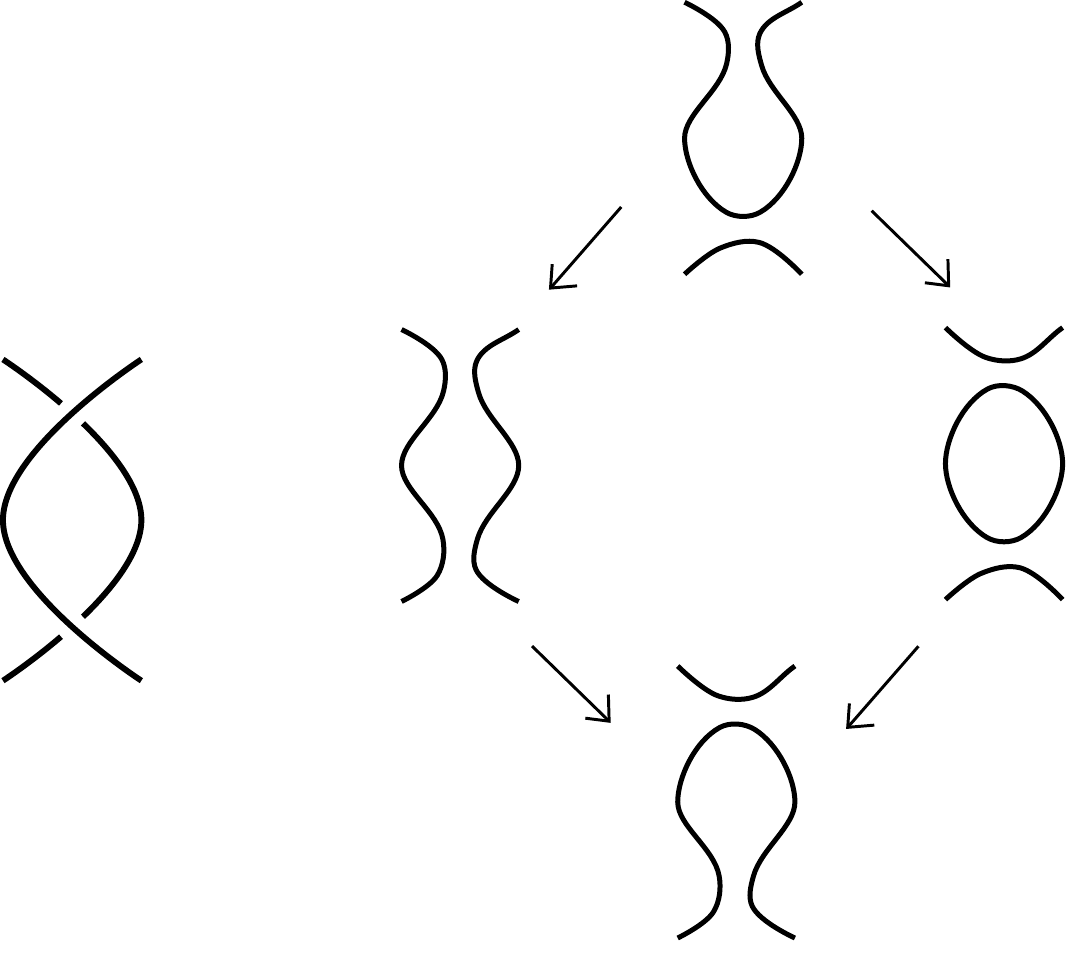}}%
    \put(0.11005133,0.51279854){\makebox(0,0)[lt]{\lineheight{1.25}\smash{\begin{tabular}[t]{l}$c_0$\end{tabular}}}}%
    \put(0.10155366,0.32207312){\makebox(0,0)[lt]{\lineheight{1.25}\smash{\begin{tabular}[t]{l}$c_1$\end{tabular}}}}%
    \put(0.65767904,0.6147706){\makebox(0,0)[lt]{\lineheight{1.25}\smash{\begin{tabular}[t]{l}$00$\end{tabular}}}}%
    \put(0.39708374,0.3060219){\makebox(0,0)[lt]{\lineheight{1.25}\smash{\begin{tabular}[t]{l}$01$\end{tabular}}}}%
    \put(0.66239994,0.00482674){\makebox(0,0)[lt]{\lineheight{1.25}\smash{\begin{tabular}[t]{l}$11$\end{tabular}}}}%
    \put(0.90788824,0.3107428){\makebox(0,0)[lt]{\lineheight{1.25}\smash{\begin{tabular}[t]{l}$10$\end{tabular}}}}%
    \put(0,0){\includegraphics[width=\unitlength,page=2]{r2-states.pdf}}%
  \end{picture}%
\endgroup%

\caption{Local picture of two crossings $c_0$ and $c_1$ that can be eliminated by a Reidemeister II move and the corresponding resolutions.}
\label{fig:r2-states}
\end{figure}

The rest of the proof follows as in \cite{roberts-type-d}.
\end{proof}

\begin{proof}[Proof of Reidemeister III Invariance]
Suppose a local picture as in the left hand side of Figure~\ref{fig:r3} of three crossings $c_1,c_2$, and $e$ appears in $T$, and we do a Reidemeister III move to modify the local picture to have the three crossings $c_1,c_2$, and $d$ to get a tangle diagram $T'$ as in the right hand side of the figure. Order the crossings so that $c_2<e<c_1$ on the left and $c_2<d<c_1$ on the right. As in \cite{roberts-type-d}, we simplify the "top" part of the type D structure corresponding to resolutions with $\rho(e)=0$ using the same process as simplifying the Reidemeister II type D structure. As in the Reidemeister II case, the arrows that we cancel involve splitting off free circles or merging free circles. What remains are the states and arrows in the "bottom" rectangle are unchanged, the states for the $001$ resolution, and a perturbation term in the new differential that goes from the $001$ resolution states to the bottom rectangle. We perform a similar procedure on the right hand side and note that the states and the bottom rectangle arrows are isotopic. Thus, all we need to check is that the perturbation terms match.

By the argument in \cite{roberts-type-d}, after performing the simplification on the left hand side, we obtain a perturbation term $001\to110$ which equals the arrow that comes from performing surgery on the $0$-resolution bridge for the crossing $c_2$. This is equal to the arrow on the right hand side that comes from surgery on the $0$-resolution bridge for the crossing $d$. Similarly, simplifying the top rectangle of the right hand side gives a perturbation term $100\to011$ which equals surgery on the $0$-resolution bridge for $c_1$, and is equal to the arrow on the left hand side that comes from surgery on the $0$-resolution bridge for the crossing $e$. 
\end{proof}

Consider a handleslide of an arc $a$ of $T$ above $\partial D_i$ for $i>p$ to obtain a new tangle diagram $T'$. If there are $2m$ intersections between $T$ and $\partial D_i$, then the handleslide will create $2m$ new crossings. If the connect-sum sphere were not there, we could cancel out these new crossings by sliding $a$ above all the arcs that intersect $\partial D_i$ back to its original position in $T$. This corresponds to a sequence of Reidemeister moves, and indeed, on the chain complex level, the type D structures for $T$ and $T'$ are related by a sequence of Reidemeister-type homotopy equivalences.

To make the above argument precise, we will pass through the notion of tangle diagrams with nodes. A tangle diagram with nodes is a tangle diagram where some of the crossings are designated as nodes and are treated as virtual crossings.

\begin{definition}
A tangle diagram with nodes $T$ in $[0,\infty)\times\R$ is an immersion of some number of $S^1$ circle components and $n$ $[0,1]$ interval components in $\overrightarrow{\U}$ such that
\begin{enumerate}
    \item The intersection of $\{0\}\times\R$ with $T$ is transverse and equal to $\partial T$, which is a set of $2n$ points.
    \item The self-intersections of $T$ is a set of distinct points where two arcs intersect transversely. 
    \item Each self-intersection is decorated as either a crossing, and one arc is drawn going above the other, or as a node.
\end{enumerate}
\end{definition}

Let the set of crossings, not including nodes, of $T$ be $\CR(T)$. A resolution $r$ of $T$ is a resolution $\rho:\CR(T)\to\{0,1\}$ of the crossings of $T$ together with a crossingless matching $m$ of $P_n$ in $(-\infty,0]\times\R$, which is a crossingless matching that is allowed to have nodes. The resolution diagram $r(T)$ is the crossingless link obtained by gluing together $T$ and $m$ and replacing neighborhoods of crossings by smoothings according to $\rho$. The nodes are not resolved, but when determining the components of the resolution diagram, the two strands that intersect are treated as disjoint from each other.

\begin{figure}
    \centering
    \includegraphics[width=0.8\textwidth]{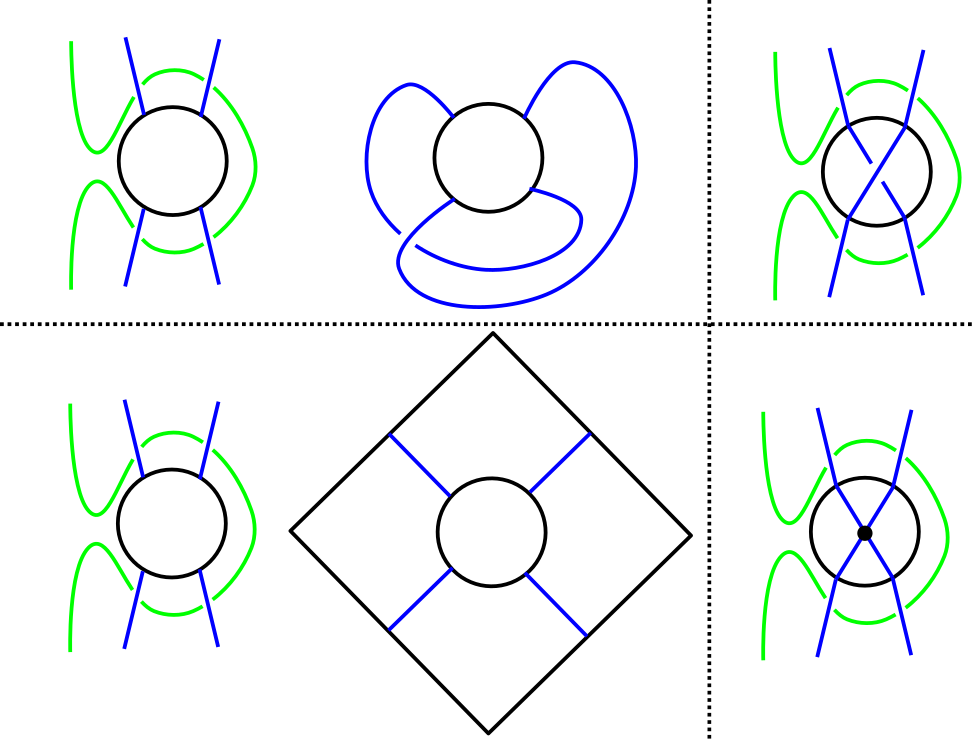}
    \caption{Two examples of local pictures of tangle diagrams with nodes. On the left is the actual tangle diagram and on the right is the tangle diagram with nodes that represents it. On the top row, there is a crossing between the blue strands. On the bottom, the blue strands do not cross in the torus (drawn as a square with opposite sides glued together), so the intersection on the right is marked as a node.}
    \label{fig:virtual-example}
\end{figure}

Given a tangle diagram $T$ in $\overrightarrow{\U}$, it is easy to construct a tangle diagram with nodes $T_*$ in $[0,\infty)\times\R$ that represents $T$ in the sense that
\begin{itemize}
    \item There is a 1-1 correspondence between interval components of $T$ and interval components of $T_*$ and a 1-1 correspondence between circle components of $T$ and circle components of $T_*$.
    \item There is a 1-1 correspondence between crossings of $T$ and crossings of $T_*$ that respects the correspondence between components, the over-under positioning of the strands at each crossing, and the linear or cyclical order of the crossings along each component. 
\end{itemize}

Fix a circle $\CC$ in $[0,\infty)\times\R$, say $\CC=\{(x,y):(x-2)^2+y^2=1\}$. The circle divides $[0,\infty)\times\R$ into two regions: let $R_1=\{(x,y):(x-2)^2+y^2\le 1\}$ and $R_2=([0,\infty)\times\R)\setminus\mathring R_1$. 

For the sake of the proof of handleslide invariance, we construct $T_*$ so that \begin{itemize}
    \item the tangle diagram with nodes $T_*\cap R_1$ represents $$\TT\cap((M_{i+1}\setminus\mathring D^3)\#\dots\#M_r),$$ 
    \item the tangle diagram with nodes $T_*\cap R_2$ represents $$\TT\cap((M_{p+1}\setminus\mathring D^3)\#M_{p+2}\#\dots\#(M_i\setminus\mathring D^3)).$$
\end{itemize} In particular, doing a handleslide of some arc of $T$ over $\partial D_i^-$ corresponds to doing a handleslide of the corresponding arc of $T$ in $R_2$ over $\CC$.

\begin{proof}[Proof of Handleslide Invariance]
We will prove invariance of $\llbracket T\rrangle$ under handleslides below $\partial D_i$; the proof for invariance under handleslides above $\partial D_i$ is the same. As shown in Figure~\ref{fig:handleslide-plus-minus}, handleslide invariance over $\partial D_i^+$ follows from handleslide invariance over $\partial D_i^-$: first do a finger move of the arc across $\partial D_i$ where the handleslide is to be performed, then do a handleslide of the arc in $F_i^\circ$ below $\partial D_i^-$, then do a sequence of finger and mirror moves across $\partial D_i$, and finally do one more finger move across $\partial D_i$ to obtain the diagram for a handleslide below $\partial D_i^+$.

\begin{figure}[h]
    \centering
    \includegraphics[width=0.5\textwidth]{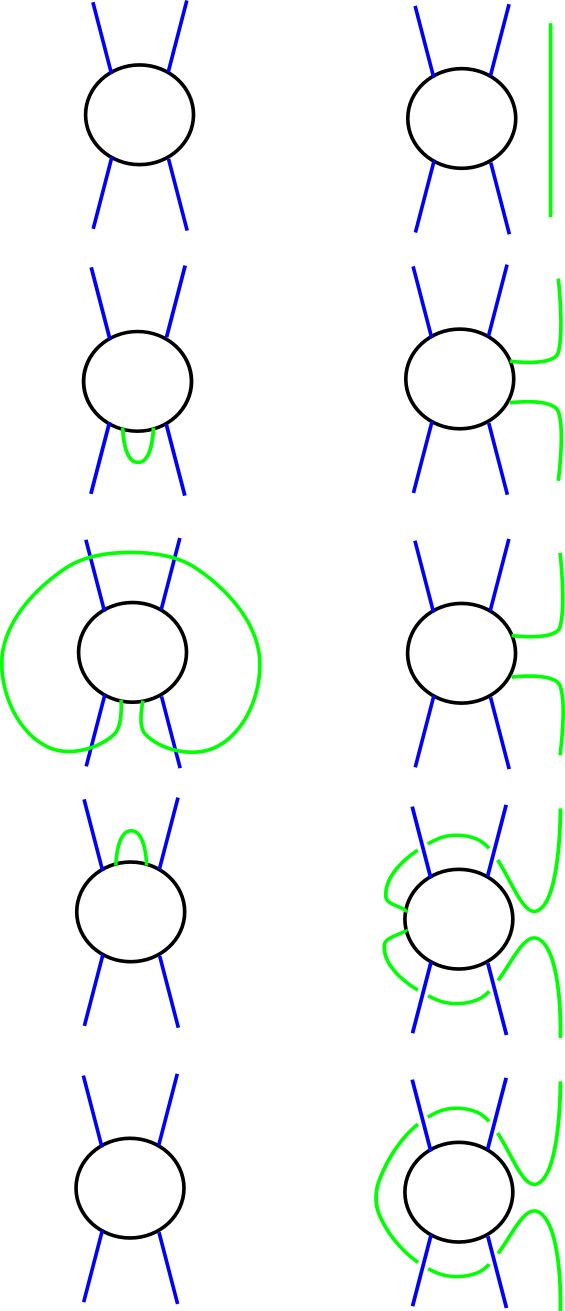}
    \caption{A handleslide over $\partial D_i^+$ can be realized by a handleslide over $\partial D_i^-$ together with a sequence of finger and mirror moves, and vice versa.}
    \label{fig:handleslide-plus-minus}
\end{figure}

Let $T$ be a tangle diagram, and let $T'$ be the tangle diagram obtained by performing a handleslide of an arc $a$ of $T$ over $\partial D_i^-$. In $T'$, $a$ corresponds to a strand that circles around $\partial D_i^-$, which we will call the distinguished strand (shown in blue in the figures). As described above, we get tangle diagrams with nodes $T_*$ and $T'_*$ corresponding to $T$ and $T'$, respectively, such that $T'_*$ is obtained from $T_*$ by doing a handleslide of $a$ over $\CC$.

First, in $T'_*$, push the strand $a$ across $\CC$ into $R_1$. This corresponds to a sequence of mirror and finger moves of $a$ and the crossings with arcs coming out of $\partial D_i^-$ over $\partial D_i^-$. We will also use the distinguished strand to refer to the strand in $T'_*$ that represents the distinguished strand in $T'$.

Now in $T'_*$, it is apparent that one can shrink the distinguished strand to a small arc parallel to a portion of $\CC$ by performing a sequence of moves supported entirely in $R_2$ that only move the distinguished strand and leave all other strands and crossings fixed. These moves are either planar isotopies, Reidemeister moves, or Reidemeister moves where one or more crossing is instead a node. Since we only move the distinguished strand and we do not start with any nodes with the distinguished strand, we do not need to consider any Reidemeister moves with nodes where one or two of the intersecting arcs at the node belong to the distinguished strand. The only such Reidemeister moves are Reidemeister III moves with nodes among other strands. 

\begin{figure}[h]
    \centering
    \includegraphics[width=0.9\textwidth]{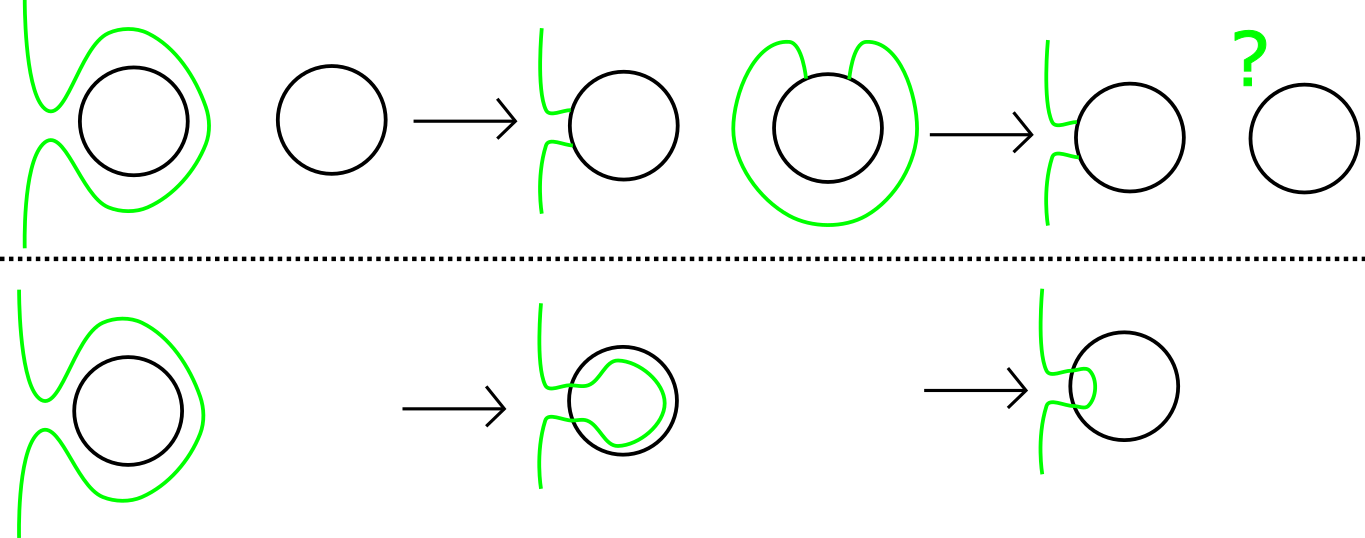}
    \caption{Steps of the proof of handleslide invariance. A schematic of the actual tangle diagram is shown on top while the corresponding virtual diagram is shown on the bottom.}
    \label{fig:handleslide-proof}
\end{figure}

A planar isotopy of the distinguished strand in $T'_*$ corresponds to either an isotopy in $\overrightarrow{\U}$ or a move that changes the homotopy class of the distinguished strand but keeps the set of all crossings fixed. There is a 1-1 correspondence between resolutions before and after the move, and the set of circles and bridges remain the same, so the type D structures before and after the move are isomorphic.

Each Reidemeister move in the virtual diagram corresponds in $T'$ to either a Reidemeister move or one of the moves shown in Figure~\ref{fig:r123-prime}, which we will call Reidemeister moves I prime, II prime, and III prime. A Reidemeister III move with nodes in $T'_*$ corresponds to an isotopy of $T'$ in $\overrightarrow{\U}$. Call the tangle diagram obtained after this sequence of moves $T''$.

\begin{figure}[h]
    \centering
    \begin{subfigure}[b]{0.4\textwidth}
        \centering
        \includegraphics[width=\textwidth]{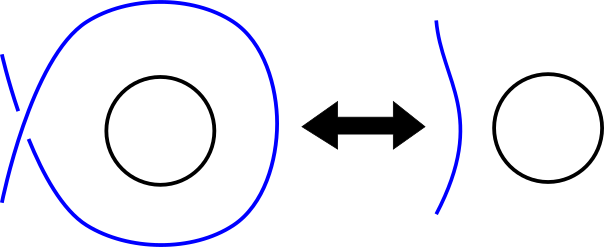}
        \caption{Reidemeister I Prime}
        \label{fig:r1-prime}
    \end{subfigure}
    \hfill
\begin{subfigure}[b]{0.4\textwidth}
    \centering
    \includegraphics[width=\textwidth]{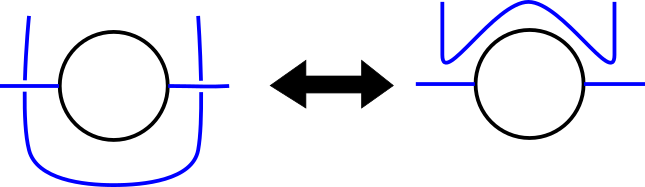}
    \caption{Reidemeister II Prime}
    \label{fig:r2-prime}
\end{subfigure}
\hfill
\begin{subfigure}[b]{0.6\textwidth}
    \centering
    \includegraphics[width=\textwidth]{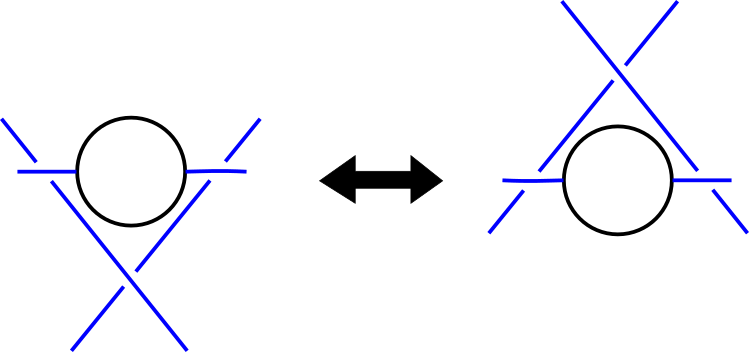}
    \caption{Reidemeister III Prime}
    \label{fig:r3-prime}
\end{subfigure}
\caption{Reidemeister I, II, and III Prime}
\label{fig:r123-prime}
\end{figure}

Note that in the prime Reidemeister moves, there are no other crossings among the strands pictured except for the crossings that are shown. We see that the partial cube of resolutions for the crossings pictured are the same as the cube of resolutions for the usual Reidemeister I, II, and III moves, and so the proof of invariance under the usual Reidemeister moves shows invariance under the prime Reidemeister moves. Therefore $\llbracket T''\rrangle$ is homotopy equivalent to $\llbracket T'\rrangle$.

After this sequence of moves, we have canceled out the crossings introduced by the handleslide and have not modified, created, or canceled out any other crossings. Thus, $\llbracket T''\rrangle$ is homotopy equivalent to $\llbracket T\rrangle$.
\end{proof}

\begin{remark}
In \cite{aps-i-bundles}, the maps in the differential in their Khovanov chain complex depends on whether the circles are trivial or nontrivial. The reason we chose a different differential is because the type D structures and the Khovanov chain complexes that we will define later are not invariant under handleslides if we do not have maps in the differential that merge two nontrivial circles to get a nontrivial circle or split a nontrivial circle into two nontrivial circles. For instance, Figure~\ref{fig:handleslide-counterexample} shows two tangles related by a handleslide that do not have homotopy equivalent type D structures.

\begin{figure}[h]
    \centering
    \includegraphics[width=\textwidth]{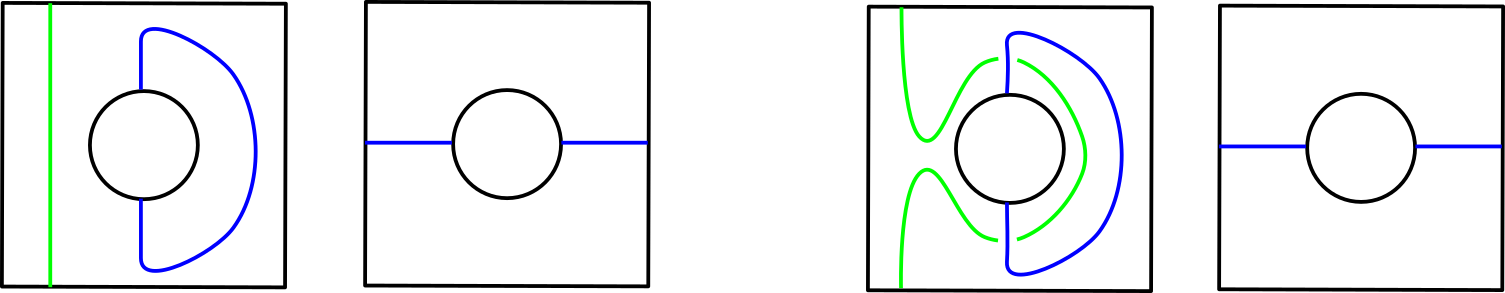}
    \caption{An example where the Khovanov homology of a link is not invariant under handleslides if we use the APS conventions. The link on the left is a disjoint union of two homologically nontrivial circles in the connected sum of two trivial interval bundles over the torus.}
    \label{fig:handleslide-counterexample}
\end{figure}
\end{remark}

\section{Type A Structure}

Given a diagram $T$ for a left tangle $\TT$, we describe two bigrading preserving maps $$m_1:\llangle T\rrbracket\to\llangle T\rrbracket[(-1,0)]$$ and $$m_2:\llangle T\rrbracket\otimes_\II\BB\Gamma_n(M,p)\to\llangle T\rrbracket.$$ In this section, we show that these two maps determine a type $A$ structure on $\llangle T\rrbracket$ over $\BB\Gamma_n(M,p)=\BB\Gamma_n$ with higher multiplications $m_i=0$ for $i\ge 3$. 

Let $(r,s)$ be a generator of $\llangle T\rrbracket$ with $\partial(r,s)=(L,\sigma)$.

The map $m_1$ is defined to be the same as $d_{\APS}$, that is $m_1(r,s)=d_{\APS}(r,s)$. We know that $d_{\APS}$ is bigrading preserving into $\llangle T\rrbracket[(-1,0)]$.

We define the action $m_2$ by describing the action of the generators of $\BB\Gamma_n$, and then use the relation
$$m_2((r,s)\otimes p_1p_2)=m_2(m_2((r,s)\otimes p_1)\otimes p_2)$$ for any $p_1,p_2\in\BB\Gamma_n$ to extend the action to all elements of $\BB\Gamma_n$.

Let $e$ be a generator of $\BB\Gamma_n$, then $m_2((r,s)\otimes e)$ is defined as follows:

\begin{enumerate}
    \item If $e$ is an idempotent, the action $m_2((r,s)\otimes I_{(L',\sigma')})=(r,s)\cdot I_{(L',\sigma')}$ is the idempotent action defined above, namely it is $(r,s)$ if $(L,\sigma)=(L',\sigma')$ and $0$ otherwise.
    \item If $e=\overrightarrow{e}_C$ for some $C\in L$ with $\sigma(C)=+$, then $m_2((r,s)\otimes_\II\overrightarrow{e}_C)=(r,s_C)$.
    \item If $e=\overleftarrow{e}_C$ for some $C\in L$ with $\sigma(C)=+$, then
    $$m_2((r,s)\otimes_\II \overleftarrow{e}_C)=\sum_{\gamma\in\overleftarrow{\DEC}((r,s),C)}(r_\gamma,s_\gamma),$$
    where $r_\gamma$ is the result of surgery on $\gamma$, and $s_\gamma$ is the new decoration with $s_\gamma(C)=-$.
    \item If $e=\overrightarrow{e}_{(\eta,\sigma,\sigma')}$ for some $\eta\in\overleftarrow{\Br}(L)$, let $s_\gamma$ be the decoration on $r_\gamma$ that equals $\sigma'$ in the cleaved circles and $s$ on the free circles, then
    $$m_2((r,s)\otimes_\II \overrightarrow{e}_{(\eta,\sigma,\sigma')})=(r_\gamma,s_\gamma).$$
    \item If $e=\overleftarrow{e}_{(\eta,\sigma,\sigma')}$ for some $\eta\in\overleftarrow{\Br}(L)$, then
    $$m_2((r,s)\otimes_\II \overleftarrow{e}_{(\eta,\sigma,\sigma')})=\sum_{\gamma\in\overleftarrow{\ACT}(r,\eta)} (r_\gamma,s_\gamma),$$ where $\overleftarrow{\ACT}(r,\eta)$ is the subset of arcs $\gamma$ in $\overleftarrow{\ACT}(r)$ with $\cl(\gamma)=\eta$ and $s_\gamma$ is the decoration on $r_\gamma$ that equals $\sigma'$ on the cleaved circles and $s$ on the free circles.
    \item In all other cases $m_2((r,s)\otimes_\II e)=0$. 
\end{enumerate}

Note that $m_2((r,s)\otimes_\II e)=0$ unless $I_{(L,\sigma)}\cdot e\ne 0$, that is $(L,\sigma)$ is the source of $e$. 

When $e=\overleftarrow{e}_C$ or $e=e_{(\eta,\sigma,\sigma')}$, it is clear that
$$m_2((r,s)\otimes_\II e)=\sum_{(\gamma,s,s_\gamma):e(\gamma)=e}(r_\gamma,s_\gamma).$$ In other words, for $\alpha=(\alpha,s,s_\alpha)\in\BRIDGE(r)$, let
$$m_\alpha((r,s)\otimes_{\II}e)=\begin{cases}
    (r_\alpha,s_\alpha) & e(\alpha)=e \\
    0 & \text{otherwise.}
\end{cases}$$ Then
$$m_2((r,s)\otimes_\II e)=\sum_{(\alpha,s,s_\alpha)\in\BRIDGE(r)}m_\alpha((r,s)\otimes_{\II} e).$$

\begin{proposition}\cite[Proposition 4.1]{roberts-type-a}
The map $m_2$ is bigrading preserving.
\end{proposition}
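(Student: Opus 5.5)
Since $m_2$ is defined on generators and extended by $m_2((r,s)\otimes p_1p_2)=m_2(m_2((r,s)\otimes p_1)\otimes p_2)$, and the bigrading on $\BB\Gamma_n$ is additive along paths, it suffices to check that for each generator $(r,s)$ of $\llangle T\rrbracket$ and each edge generator $e$ with source $\partial(r,s)$, every term of $m_2((r,s)\otimes e)$ has bigrading equal to that of $(r,s)$ plus the bigrading of $e$. Idempotents have bigrading $(0,0)$ and act trivially or as the identity, so they are immediate. For the remaining generators I would compare the three ingredients of the bigrading of a state, $h(r)$, $q(r,s)$ and $\tfrac12\iota(r,s)$. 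The constants $n_\pm$ do not change. I would also keep track of the left/right conventions for the left tangle: left bridges are active resolution bridges, so surgery raises $h$ by $1$, and right bridges are bridges of the matching $\overrightarrow{m}$, so surgery leaves $h$ fixed.

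\textbf{Right generators.} For $\overrightarrow{e}_C$ the action sends $(r,s)\mapsto(r,s_C)$. Here $h$ and $q$ are unchanged and $\iota$ drops by $2$, so the change is $(0,-1)$, which matches the bigrading of $\overrightarrow{e}_C$. For $\overrightarrow{e}_{(\eta,\sigma,\sigma')}$ we surger the matching, so there is no resolution change and the free circles are untouched. A bridge edge lowers $\iota$ by exactly $1$ (this was observed in the finite-dimensionality proof), so the change is $(0,-\tfrac12)$, as required.

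\textbf{Left generators.} For $\overleftarrow{e}_{(\eta,\sigma,\sigma')}$, each term comes from an active arc $\gamma$ with $\cl(\gamma)=\eta$. Surgery raises $h$ by $1$, leaves the free circles untouched (because $\gamma\in R(r)$ changes only cleaved circles), and lowers $\iota$ by $1$. The quantum grading therefore changes by $1-\tfrac12=\tfrac12$, giving $(1,\tfrac12)$. For $\overleftarrow{e}_C$, each term comes from $\gamma\in\overleftarrow{\DEC}((r,s),C)$. Here $h$ rises by $1$ and $\iota$ drops by $2$, contributing $-1$ to the quantum grading. By the two cases in the definition of $\DEC$, $q$ is unchanged in each case: either a $-$ free circle is absorbed while $C$ goes from $+$ to $-$, or a $-$ free circle is split off while $C$ goes from $+$ to $-$. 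Wait — absorbing a $-$ free circle raises $q$ by $1$ (removing a $-1$), and splitting off a $-$ free circle lowers $q$ by $1$. Those two effects disagree, so this step needs a careful sign check.

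This $\overleftarrow{e}_C$ computation is the main obstacle. I expect the two $\DEC$ subcases to genuinely differ in $\Delta q$, so I would redo the bookkeeping as in the proof that $\overrightarrow{\delta}_T$ is grading preserving. That proof used the identity that $q+\tfrac12\iota$ (counted over all circles, with cleaved circles weighted $\tfrac12$) changes uniformly under surgery on an active arc. I would use it here as well, checking each subcase separately and invoking the corresponding computation in \cite[Proposition 4.1]{roberts-type-a}. Since the circles involved are trivial or not only matters through merge/split/bifurcation, and 1-1 bifurcations give no terms, no new cases arise beyond Roberts'.
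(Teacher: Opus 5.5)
Your reduction to edge generators is fine, and your checks for $\overrightarrow{e}_C$, $\overrightarrow{e}_{(\eta,\sigma,\sigma')}$ and $\overleftarrow{e}_{(\eta,\sigma,\sigma')}$ are correct. This generator-by-generator check is what the paper means by ``dual to the type D argument.'' The gap is the $\overleftarrow{e}_C$ case. It is the only case with real content, and you leave it unresolved because of a wrong computation.

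You claim that in the split subcase of $\DEC$ a $-$ free circle is split off, so the two subcases have opposite $\Delta q$, and you say you expect this discrepancy to be genuine. If it were, two terms of $m_2((r,s)\otimes\overleftarrow{e}_C)$ would sit in different quantum gradings and the proposition would be false. Appealing to \cite[Proposition 4.1]{roberts-type-a} therefore cannot close the step.

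The mistake is the decoration of $D$. A $+$ circle splits into one $+$ and one $-$ circle, and $\DEC$(b) requires $s_\gamma(C')=-$, so the free circle $D$ is decorated $+$. Hence in both subcases $q$ increases by exactly $1$: either a $-$ free circle disappears or a $+$ free circle appears. Neither your first claim (``$q$ unchanged'') nor your second (``$q$ drops in the split case'') is right. Combined with $\Delta h=1$ and $\Delta(\tfrac12\iota)=-1$, each term shifts the bigrading by $(1,1)$, which is the bigrading of $\overleftarrow{e}_C$. The paper's type D proof makes the same observation: ``one fewer $-$-decorated free circle or one additional $+$-decorated free circle.''

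Your proposed fallback also does not exist: $q+\tfrac12\iota$ does not change uniformly under surgery on an active arc. It drops by $1$ for arcs with $e(\alpha)=I$, drops by $\tfrac12$ for arcs in $R(r)$, and is unchanged for arcs in $\DEC(r,s)$. The grading works precisely because these differences are matched by the different bigradings of $I$, $\overleftarrow{e}_\gamma$ and $\overleftarrow{e}_C$. With the corrected decoration in $\DEC$(b), your direct case check finishes the proof; no uniform identity is needed.
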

\begin{proof}
The proof is dual to the proof that the type D differential is bigrading preserving, so we omit it.
\end{proof}

In order for $m_2$ to be well-defined on all elements, it must satisfy the following proposition.

\begin{proposition}
Let $\rho$ be a relation in the algebra $\BB\Gamma_n$, then for all generators $(r,s)\in\llangle T\rrbracket$, 
$$m_2((r,s)\otimes \rho)=0.$$
\end{proposition}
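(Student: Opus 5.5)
We need to show that $m_2((r,s)\otimes\rho)=0$ for each defining relation $\rho$ of $\BB\Gamma_n$. Since $m_2$ is extended multiplicatively, $m_2((r,s)\otimes e_1e_2)=m_2(m_2((r,s)\otimes e_1)\otimes e_2)$. Using the description
$$m_2((r,s)\otimes e)=\sum_{(\alpha,s,s_\alpha)\in\BRIDGE(r)}m_\alpha((r,s)\otimes e)$$
for non-idempotent generators, a length-two path $e_1e_2$ acts by
$$m_2((r,s)\otimes e_1e_2)=\sum (r_{\alpha,\beta},s_{\alpha,\beta}),$$
where the sum runs over length-two bridge paths $(r,s)\xrightarrow{\alpha}(r_\alpha,s_\alpha)\xrightarrow{\overline\beta}(r_{\alpha,\beta},s_{\alpha,\beta})$ with $e(\alpha)=e_1$ and $e(\overline\beta)=e_2$. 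This holds except when $e_i=\overrightarrow{e}_C$ or $e_i$ is a right bridge edge, which act without surgery on $r$. The plan is therefore to check, relation by relation, that for each target state $(r',s')$ the number of contributing paths, summed over the terms of $\rho$, is even.

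\textbf{Relations that do not need Theorem~\ref{thm:existence-squares}.} First handle the relations whose edges act directly on decorations or on $\overleftarrow{m}$. Right decoration edges $\overrightarrow{e}_C$ and right bridge edges $\overrightarrow{e}_\eta$ modify only the cleaved data, so the relations among them alone follow from the corresponding equalities of cleaved links and decorations:
\begin{itemize}
\item Relations~\ref{relation-eCeC}, \ref{relation-eCbridge} and \ref{relation-bridgebridge} when all edges are right.
\item Relations~\ref{relation-righteCmerge} and \ref{relation-righteCdivide}.
\item Relation~\ref{relation-thereandback}: surgery on $\eta$ then $\eta^\dagger$ returns $\overleftarrow{m}$, and the decoration becomes $\sigma_C$.
\item Relation~\ref{relation-bridgesquare} for right bridges.
\end{itemize}
For mixed relations, one right edge and one left edge, the right edge changes only the matching or the cleaved decorations. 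I would argue it commutes with the left action because the set of arcs $\gamma\in\overleftarrow{\ACT}(r)$ or $\overleftarrow{\DEC}(r,s,C)$ is identified before and after the right move. This holds whenever the supports are disjoint, or by the square argument otherwise.

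\textbf{Left relations via Theorem~\ref{thm:existence-squares}.} For relations consisting of left edges, each term counts pairs $(\alpha,\overline\beta)$ of active or decoration-changing arcs in $\overleftarrow{\Br}(r)$. The key step is to group the paths by target $(r',s')$ and apply Theorem~\ref{thm:existence-squares} to the pair of underlying crossing arcs. If the two arcs lie on distinct crossings, then $\cl(\alpha)$ and $\cl(\beta)$ are either in $B_d$ or related by the $B_s$/$B_o$ configurations.
\begin{itemize}
\item Parts (2)--(3) of the theorem show that each path $(\alpha,\overline\beta)$ is paired with $(\beta,\overline\alpha)$, and that the algebra labels $e(\alpha)e(\overline\beta)$, $e(\beta)e(\overline\alpha)$ are exactly two terms of the same relation (\ref{relation-bridgesquare}, \ref{relation-lefteCmerge}, \ref{relation-lefteCdivide}, or \ref{relation-1-1-bifurcation}). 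Hence the contributions cancel mod $2$ in $m_2((r,s)\otimes\rho)$.
\item For Relation~\ref{relation-bridge-same}, the three left bridges $\alpha,\beta,\gamma$ in $\overleftarrow{L}$ correspond to active arcs in $r$. A pair of arcs realizing $\alpha$ then $\overline\beta$ is matched with the pair realizing $\beta$ then $\overline\gamma$ or $\gamma$ then $\overline\alpha$, according to which arc slid over which, so each target is hit an even number of times.
\item For Relation~\ref{relation-bridge-intersect}, I would show directly that no arcs $\gamma\in\overleftarrow{\ACT}(r,\gamma_0)$ and $\eta'\in\overleftarrow{\ACT}(r_\gamma,\eta)$ exist. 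Since $\eta\in B_\pitchfork(L_\gamma,\gamma^\dagger)$, every representative of $\eta$ crosses $\gamma^\dagger$, but an active arc of $r_\gamma$ is disjoint from the cocore of the surgery arc. This is a contradiction, so the product acts by zero.
\end{itemize}

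\textbf{Main obstacle.} I expect the hardest part to be the cases in which one of $\alpha,\beta$ is a 1-1 bifurcation, together with the special 1-1 bifurcation configuration of Relation~\ref{relation-1-1-bifurcation}. Here Theorem~\ref{thm:existence-squares} gives only the two paths through the fission, with no partner paths. The plan is to check that these two paths land on the same state. The final decoration is $-$ on $C'$ in both, and the free circles are unaffected, so the two paths contribute the same generator twice and cancel over $\Z/2\Z$. A subsidiary difficulty is bookkeeping when $\cl(\alpha)=\cl(\beta)$ as classes, for example an arc and its pushoff in the $B_o$ case. Following \cite{roberts-type-a}, I would handle this by sliding feet so that the two arcs correspond to distinct representatives.
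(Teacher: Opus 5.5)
Your proposal is correct at the paper's level of detail and follows essentially the same route. It pairs length-two paths of arcs via Theorem~\ref{thm:existence-squares} and treats the there-and-back, $B_\pitchfork$, $B_s$-triple and special 1-1 bifurcation relations separately. It disposes of the purely right and mixed relations by noting that right edges act deterministically on the matching and on the cleaved decorations (for a left tangle the matching is $\overrightarrow{m}$, not $\overleftarrow{m}$ as you wrote); the paper covers these through the theorem or by citing \cite{roberts-type-a}.

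The one point to keep straight is that left--left pairs whose classes lie in $B_o$ (or $B_s$) fall outside the stated hypothesis of Theorem~\ref{thm:existence-squares}. There your argument rests on tracking which image class an arc realizes after surgery, which you flag at the end and which the paper likewise defers to Roberts.
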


\begin{proof}
Relations $\rho$ that only involve terms that are words in $\overleftarrow{e}_C$, $\overleftarrow{e}_\eta$, or $\overrightarrow{e}_\eta$, with the exception of pairs of left bridges with nondisjoint supports, are handled by Theorem~\ref{thm:existence-squares}. 

Indeed, for such a relation $\rho$, and any $\alpha=(\alpha,s,s_\alpha)\in\BRIDGE(r)$ and $\overline\beta=(\overline\beta,s_\alpha,s_{\alpha,\beta})\in\BRIDGE(r_\alpha)$, define
$$m_{\alpha,\overline\beta}((r,s)\otimes\rho)=\begin{cases}
    (r_{\alpha,\beta},s_{\alpha,\beta}) & e(\alpha)e(\overline\beta)\text{ is a term in }\rho \\
    0 & \text{otherwise.}
\end{cases}$$ Then by Theorem~\ref{thm:existence-squares}, there exists a path $\beta,\overline\alpha$ such that $$m_{\alpha,\overline\beta}((r,s)\otimes\rho)=m_{\beta,\overline\alpha}((r,s)\otimes\rho)$$ or there are two choices of decorations for $\alpha,\overline\beta$ such that 
$$m_{\alpha_1,\overline\beta_1}((r,s)\otimes\rho)=m_{\alpha_2,\overline\beta_2}((r,s)\otimes\rho)$$
whenever $\alpha,\beta\in\BRIDGE(r)$ with one of $\alpha$ or $\beta$ in $\overrightarrow{\BRIDGE}(r)$ or $\alpha\in B_d(L,\beta)$. 

Thus, we have
$$m_2((r,s)\otimes\rho)=\sum_{(\alpha,s,s_\alpha)\in\BRIDGE(r)}\sum_{(\overline\beta,s_\alpha,s_{\alpha,\beta})\in\BRIDGE(r_\alpha)}m_{\alpha,\overline\beta}((r,s)\otimes\rho).$$ If $\alpha$ and $\beta$ are in $\BRIDGE(r)$, then these terms cancel with each other.

For the cases where $\overline\beta\in\BRIDGE(r_\alpha)$ is not the image of a bridge $\beta\in\BRIDGE(r)$, either $\overline\beta=\alpha^\dagger$ or $\overline\beta$ intersects $\alpha^\dagger$. The case where $\overline\beta=\alpha^\dagger$ involves $\overrightarrow{e}_C$ terms and is dealt with in the same way as in the proof of \cite[Proposition 4.2]{roberts-type-a}. 

When $\overline\beta$ intersects $\alpha^\dagger$, there is no relation $\rho$ involving terms with $e(\alpha)e(\overline\beta)=e_\alpha e_{\overline\beta}$ when $\alpha,\overline\beta$ are in $\overrightarrow{\U}$. When $\alpha,\overline\beta$ are in $\overleftarrow{\U}$, we have $\overleftarrow{e}_\alpha \overleftarrow{e}_{\overline\beta}=0$, and indeed $m_2((r,s)\otimes\overleftarrow{e}_\alpha \overleftarrow{e}_{\overline\beta})=0$ since there are no pairs of active arcs for the state $(r,s)$ that realize $\alpha,\overline\beta$. 

Therefore, for $\rho$ as above, we have $m_2((r,s)\otimes\rho)=0$.

The remaining relations either involve $\overrightarrow{e}_C$ terms or pairs of bridges $\alpha,\beta\in\overleftarrow{\BRIDGE}(r)$ with $\beta\in B_s(L,\alpha)$ or $\beta\in B_o(L,\alpha)$. They are handled the same way as in the proof of \cite[Proposition 4.2]{roberts-type-a}. 

\end{proof}

The next proposition shows that $\llangle T\rrbracket$ with the maps $m_1$ and $m_2$ as defined above and $m_n=0$ for $n\ge 3$ defines an $A_\infty$-module over the differential graded algebra $\BB\Gamma_n$.

\begin{proposition}
For $(r,s)$ a generator of $\llangle T\rrbracket$ and $\rho_1,\rho_2\in\BB\Gamma_n$, the maps $m_1$ and $m_2$ defined above satisfy the following properties:
\begin{align}
    0 &= m_1(m_1((r,s))), \\
    0 &= m_2(m_1((r,s))\otimes\rho_1)+m_2((r,s)\otimes d_{\BB\Gamma_n}(\rho_1))+m_1(m_2((r,s)\otimes\rho_1)), \\
    0 &= m_2(m_2((r,s)\otimes\rho_1)\otimes\rho_2)+m_2((r,s)\otimes\rho_1\rho_2).
\end{align}
\end{proposition}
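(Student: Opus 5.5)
The first identity is $\overleftarrow{d}_{\APS}^2=0$. I would deduce it from Theorem~\ref{thm:aps-differential}, applied to the left tangle with the roles of the two halves swapped. The third identity is associativity. It holds by construction, since $m_2$ was defined on generators and extended through $m_2((r,s)\otimes p_1p_2)=m_2(m_2((r,s)\otimes p_1)\otimes p_2)$. This extension is well defined because the preceding proposition shows $m_2$ kills every relation of $\BB\Gamma_n$; equivalently, $\llangle T\rrbracket$ is a right module over the path category modulo relations. So the third identity reduces to the preceding proposition, plus the remark that idempotents act compatibly. That leaves the second identity, the Leibniz-type compatibility of $m_1$, $m_2$ and $d_{\BB\Gamma_n}$.

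For the second identity, I would first reduce to generators. Both $d_{\BB\Gamma_n}$ and the module structure obey Leibniz and associativity, so if the identity holds for $\rho_1$ and $\rho_2$ it holds for $\rho_1\rho_2$. To see this, expand $m_1(m_2(m_2(x\otimes\rho_1)\otimes\rho_2))$ twice using the identity for single generators; the cross terms $m_2(m_2(m_1x\otimes\rho_1)\otimes\rho_2)$ etc.\ match up, and over $\Z/2\Z$ there are no signs to track. So it suffices to take $\rho_1=e$ a single edge or idempotent.

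For idempotents the identity is immediate, since $m_1$ preserves $\partial(r,s)$ and $d(I)=0$. For $e$ any edge other than $\overleftarrow{e}_C$ we have $d(e)=0$, and the claim is that $m_1$ commutes with $m_2(\cdot\otimes e)$. I would write both composites as sums over length-two paths $\alpha,\overline\beta$ of bridges in $\BRIDGE(r)$ with $e(\alpha)e(\overline\beta)\in\{I\cdot e,\ e\cdot I\}$. Theorem~\ref{thm:existence-squares} then pairs each such path with a path $\beta,\overline\alpha$, or with a second decoration of the same path, which gives cancellation in pairs. The hypothesis of that theorem holds because one of the two bridges is an active resolution bridge. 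If it is a right bridge the theorem applies directly. If both are left bridges in $\overleftarrow{\U}$ with non-disjoint support, the relevant case is still a square with one of the factors equal to $I$, and I would check the needed commutation from the explicit table of cases in the proof of Theorem~\ref{thm:existence-squares}. Paths through a 1-1 bifurcation contribute zero on both sides, except in the (Fission, Bifurcation, Bifurcation, Merge) case, where the two decorations cancel.

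The main obstacle is $e=\overleftarrow{e}_C$. There $m_2((r,s)\otimes d(\overleftarrow{e}_C))$ is a sum over $\gamma\in\overleftarrow{\Br}(L)$ of $m_2$ applied to $\overleftarrow{e}_\gamma\overleftarrow{e}_{\gamma^\dagger}$, and this must cancel against the failure of $m_1$ to commute with $m_2(\cdot\otimes\overleftarrow{e}_C)$. I would follow the proof of \cite[Proposition 4.3]{roberts-type-a}, which mirrors the type D computation in Theorem~\ref{thm:type-D-relation}. Terms of the form active arc followed by active arc, where the composite changes only the decoration of $C$ from $+$ to $-$, come in three kinds:
\begin{enumerate}
\item a $\DEC$ arc composed with an $\INT$ arc, which appears in $m_1m_2$ or $m_2m_1$;
\item a pair $\alpha,\alpha^\dagger$ with $\cl(\alpha)=\gamma$ and $\cl(\alpha^\dagger)=\gamma^\dagger$, which realizes $m_2(\cdot\otimes\overleftarrow{e}_\gamma\overleftarrow{e}_{\gamma^\dagger})$;
\item a pair of $\INT$/$\DEC$ arcs whose composite splits a free circle off $C$ and merges it back.
\end{enumerate}
I would show these cancel using the same square analysis, now applied to the free-circle subcases of Theorem~\ref{thm:existence-squares}. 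The genuinely new point compared with $S^3$ is that when $\gamma$ is a 1-1 bifurcation there are no edges, so such $\gamma$ contribute nothing to $d(\overleftarrow{e}_C)$. I must then check that the corresponding active-arc pairs contribute nothing either: a pair passing through a 1-1 bifurcation gives zero, and Lemma~\ref{lem:bif-case-iii} and Lemma~\ref{lem:bif-case-iv} rule out the remaining mixed configurations.
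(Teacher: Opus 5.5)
Your proposal follows essentially the same route as the paper. The shared steps are: $m_1^2=0$ from $\overleftarrow{d}_{\APS}^2=0$; associativity by construction of $m_2$, which is well defined because it kills the relations; reduction of the middle identity to idempotents and single edges via Leibniz and associativity; commutation of $m_1$ with $m_2(\cdot\otimes e)$ for $e\neq\overleftarrow{e}_C$ via the case analysis in the proof of Theorem~\ref{thm:existence-squares}; and the $\overleftarrow{e}_C$ case handled as in Roberts with the same square analysis.

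One point in your item (2) needs sharpening. For a left tangle the literal cocore $\alpha^\dagger$ is an inactive $1$-resolution arc. So $m_2((r,s)\otimes\overleftarrow{e}_\gamma\overleftarrow{e}_{\gamma^\dagger})$ is realized by a second, distinct active arc $\overline\beta$ of $r_\alpha$ with $\cl(\overline\beta)=\gamma^\dagger$ (cf.\ the improper pairs in the paper's proof). Typically $\beta$ has both feet on the arc carrying a foot of $\alpha$, one on each side of it, and in $r$ splits off a free circle. It is these terms that cancel against the $\INT$/$\DEC$ composites taken in the opposite order.
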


\begin{proof}
Just like in \cite[Proposition 4.3]{roberts-type-a}, the fact that $m_1\circ m_1=0$ follows from $d_{\APS}$ being a differential. The fact that $m_2$ defines a right action follows from the way we extended it from the generators of $\BB\Gamma_n$ to all elements of $\BB\Gamma_n$. Thus, we only need to verify the second relation.

By the same argument as in the proof of Proposition 4.3 in \cite{roberts-type-a}, we only need to prove this relation for words $\rho_1$ of length $0$ or $1$. Furthermore, the argument in that the relation holds for length $0$ elements is the same as in \cite{roberts-type-a}.

The words of length 1 are generators of type $\overleftarrow{e}_C$, $\overrightarrow{e}_C$, $\overleftarrow{e}_\gamma$, or $\overrightarrow{e}_\gamma$. When $\rho_1$ is one of the latter three, since $d_{\BB\Gamma_n}(\rho_1)=0$, the assertion we need to check is 
$$d_{\APS}(m_2((r,s)\otimes \rho_1))=m_2(d_{\APS}(r,s)\otimes\rho_1).$$ The left hand side is a sum over all length two paths $\alpha,\overline\beta$ of $(r_{\alpha,\beta},s_{\alpha,\beta})$ such that $e(\alpha)=\rho_1$ and $e(\overline\beta)=I$ while the right hand side is a sum over those paths with $e(\alpha)=I$ and $e(\overline\beta)=\rho_1$. By the proof of Theorem~\ref{thm:existence-squares}, such a path $\alpha,\overline\beta$ cancels with $\beta,\overline\alpha$ such that either $e(\beta)=\rho_1$ and $e(\overline\alpha)=I$, so $\beta,\overline\alpha$ contributes to $d_{\APS}(m_2((r,s)\otimes \rho_1))$, or $e(\beta)=I$ and $e(\overline\alpha)=\rho_1$, so $\beta,\overline\alpha$ contributes to $m_2(d_{\APS}(r,s)\otimes\rho_1)$, or there is a pair of choices of decorations for $\alpha,\overline\beta$ that similarly cancel with each other.

Now let $\rho_1=\overleftarrow{e}_C$ for some cleaved circle $C$ with $s(C)=+$. We need to check that
$$m_2(d_{\APS}(r,s)\otimes\overleftarrow{e}_C)+m_2((r,s)\otimes d_{\BB\Gamma_n}(\overleftarrow{e}_C))+d_{\APS}(m_2((r,s)\otimes\overleftarrow{e}_C))=0.$$
First, $m_2(d_{\APS}(r,s)\otimes\overleftarrow{e}_C)$ is a sum over all $\alpha,\overline\beta$ such that $e(\alpha)=I$ and $e(\overline\beta)=\overleftarrow{e}_C$ while $d_{\APS}(m_2((r,s)\otimes\overleftarrow{e}_C))$ is a sum over all $\alpha,\overline\beta$ such that $e(\alpha)=\overleftarrow{e}_C$ and $e(\overline\beta)=I$. The middle term $m_2((r,s)\otimes d_{\BB\Gamma_n}(\overleftarrow{e}_C))$ is a sum over all there-and-back paths $\alpha,\alpha^\dagger$ with active circle $C$ and all improper pairs $\alpha,\overline\beta$ with active circle $C$. Again, the proof of Theorem~\ref{thm:existence-squares} shows that these all cancel.

\end{proof}

\subsection{Invariance of the Type A Structure}

\begin{theorem}
Let $\overleftarrow{\TT}$ be a left tangle with diagram $\overrightarrow{T}$. The homotopy class of the type A structure $(\llangle T\rrbracket, \{m_i\}_{i=1}^\infty)$ for the maps $m_i$ defined above is an invariant of the tangle $\overleftarrow{\TT}$.
\end{theorem}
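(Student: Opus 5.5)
The plan follows the type D invariance proof (Theorem~\ref{thm:type-d-invariance}) and Roberts' type A invariance argument \cite{roberts-type-a}. By the lemma of Section 2, restricted to the left half, two diagrams of $\overleftarrow{\TT}$ are related by a sequence of the following moves: Reidemeister I, II, III in a disk avoiding the $\partial D_i^\pm$, and finger moves, mirror moves and handleslides across $\partial D_i$ for $i<p$. It therefore suffices to show that each such move changes $(\llangle T\rrbracket,m_1,m_2)$ only up to $A_\infty$ homotopy equivalence of type A structures over $\BB\Gamma_n$.

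\textbf{Finger and mirror moves.} These give strict isomorphisms, exactly as in the type D case. Pushing circles along the isotopy identifies states, with the circle identifications for the $0$- and $1$-resolutions of the mirrored crossing as in Figure~\ref{fig:mirror-states}. This identification preserves the bigrading, active arcs, cleaved circles, and the classes $\cl(\gamma)\in\overleftarrow{\Br}(L)$ of active arcs. Since $m_1=d_{\APS}$ and $m_2$ are defined purely in terms of $e(\gamma)$, both maps are intertwined.

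\textbf{Reidemeister moves.} I would use the type A analogue of cancellation, namely the homological perturbation lemma for $A_\infty$ modules as used in \cite{roberts-type-a}. The arrows cancelled are the same as in the type D proofs:
\begin{itemize}
\item for RI, the merge of a $+$ free circle;
\item for RII, the split producing a $-$ free circle and the merge of a $+$ free circle;
\item for RIII, the top-face simplification.
\end{itemize}
Each of these is a $d_{\APS}$-arrow between states with the same boundary $(L,\sigma)$, and it is an isomorphism because it only creates or destroys free circles. Perturbation then produces new $m_1$, a new $m_2$, and possibly higher $m_k$. We must check two things. First, the induced $m_k$ for $k\ge 3$ vanish. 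Second, the perturbed $m_2$ agrees with the one on the simpler diagram.

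For this I would argue as Roberts does. A perturbation term is a zigzag that alternates a cancelled-arrow inverse with an $m_2$ action. The action by an algebra generator is surgery on a left resolution arc, a left matching bridge, or a decoration change. Such an action commutes past the local picture (Theorem~\ref{thm:existence-squares} supplies the relevant squares), so every zigzag returning to the surviving subcomplex contributes zero. In RIII the single surviving perturbation is identified with surgery on the corresponding resolution bridge, as in the type D case.

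\textbf{Handleslides.} I would reuse the nodal-diagram argument. Represent $T'$ by a tangle diagram with nodes $T'_*$, push the distinguished strand into $R_2$, and shrink it by planar isotopies and Reidemeister/prime Reidemeister moves. Planar isotopies give isomorphic type A structures because resolutions, circles and bridges correspond. The prime moves have the same partial cube of resolutions as the ordinary ones, so the Reidemeister argument above applies verbatim.

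\textbf{Main obstacle.} I expect the hardest step to be the perturbation analysis. Unlike the type D side, where only $\delta$ changes, here the simplification can in principle create $m_3$ terms. These terms involve left decoration edges $\overleftarrow{e}_C$, which act by summing over $\overleftarrow{\DEC}$ arcs, and in our setting those arcs may now include 1-1 bifurcations and the special configuration of Relation~\ref{relation-1-1-bifurcation}. My first attempt would be to show that the cancelled arrows are supported on free circles in the local disk. Any $\overleftarrow{\DEC}$ or bridge action from a cancelled-target state would then factor through the local picture. That would force zigzags of length $\ge 2$ to cancel in pairs using Theorem~\ref{thm:existence-squares}, just as in the planar case.
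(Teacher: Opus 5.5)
Your plan is essentially the paper's proof. The paper only says that type A invariance is proved the same way as type D invariance, and your proposal has exactly that structure: state identifications for finger and mirror moves, Roberts-style cancellation of the free-circle $d_{\APS}$ arrows for the Reidemeister moves, and the nodal-diagram reduction to (prime) Reidemeister moves for handleslides, with your perturbation discussion filling in what the paper leaves implicit. One small slip: after the handleslide the distinguished strand already encircles $\CC$ in $R_2$, and the paper pushes it across $\CC$ into $R_1$, not into $R_2$.
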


The invariance of the type A structure is proven in much the same way as the invariance of the type D structure, so we will omit it.

\section{Pairing}

Let $\TT_1$ be a tangle in $M_1\#\dots\#M_p\setminus \mathring D^3$ and $\TT_2$ be a tangle in $M_{p+1}\#\dots\#M_r\setminus\mathring D^3$ both with $2n$ endpoints on $\partial D^3$ and form their boundary connected sum $\LL=\TT_1\natural \TT_2$, which is a link in $M=M_1\#\dots\#M_r$.

Let $T_1$ be a tangle diagram for $\TT_1$ and $T_2$ be a tangle diagram for $\TT_2$. In the above sections, we showed how to define a type A structure $\llangle T_1\rrbracket$ for $T_1$ and the type D structure $\llbracket T_2\rrangle$ for $T_2$. We define the box tensor product $\llangle T_1\rrbracket\boxtimes\llbracket T_2\rrangle$ of $\llangle T_1\rrbracket$ and $\llbracket T_2\rrangle$ to be the bigraded module
$$\llangle T_1\rrbracket\otimes_{\II_n}\llbracket T_2\rrangle$$
equipped with the map
$$\partial^\boxtimes(x\otimes y)=m_{1,T_1}(x)\otimes y + (m_{2,T_1}\otimes\I)(x\otimes\overrightarrow{\delta}_{T_2}(y)).$$

It follows from standard results about $A_\infty$ modules that $\partial^\boxtimes$ is a $(1,0)$-differential map on $\llangle T_1\rrbracket\boxtimes\llbracket T_2\rrangle$.

\begin{theorem}\label{thm:pairing-invarinace}
The chain homotopy type of $\llangle T_1\rrbracket\boxtimes\llbracket T_2\rrangle$ is an invariant of the link $\LL=\TT_1\natural \TT_2$.
\end{theorem}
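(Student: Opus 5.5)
The plan is to reduce everything to the list of moves in the Lemma of Section~2, applied to the diagram $T_1\cup_{P_n}T_2\subset\Sigma$ of $\LL$. The moves split into two classes: those supported away from the splitting circle $\partial D_p$, and those that interact with $\partial D_p$.

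\textbf{Moves away from $\partial D_p$.} Any Reidemeister move, and any finger, mirror or handleslide move across $\partial D_j$ with $j\ne p$, is supported in one half. Such a move changes only $T_1$ or only $T_2$.
\begin{enumerate}
\item By Theorem~\ref{thm:type-d-invariance} and the analogous type A invariance theorem, the corresponding structure $\llbracket T_2\rrangle$ or $\llangle T_1\rrbracket$ changes only by a homotopy equivalence.
\item I then invoke the standard fact that $\boxtimes$ carries homotopy equivalences of type D structures, and $A_\infty$ homotopy equivalences of type A structures, to chain homotopy equivalences. This fact needs a boundedness hypothesis.
\item Boundedness holds here. $\QQ\Gamma_n$, and hence $\BB\Gamma_n$, is finite dimensional, since $\iota$ strictly decreases along directed paths. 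The modules are finitely generated. The type A side has $m_i=0$ for $i\ge3$, so only $m_1$ and $m_2$ enter $\partial^\boxtimes$.
\item The homotopy equivalences produced in the invariance proofs come from cancellation (\cite[Proposition 40]{roberts-type-d}). They are therefore given by finite sums, and the induced maps on $\llangle T_1\rrbracket\otimes_{\II_n}\llbracket T_2\rrangle$ are well defined.
\end{enumerate}
This settles all moves of the first class.

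\textbf{Moves across $\partial D_p$.} These change the pair $(\TT_1,\TT_2)$ itself, and possibly $n$. Here I would first give an explicit description of $(\llangle T_1\rrbracket\boxtimes\llbracket T_2\rrangle,\partial^\boxtimes)$.
\begin{enumerate}
\item Generators $x\otimes y$ with matching idempotents correspond bijectively to states of the whole diagram $T_1\cup T_2$: a resolution of all crossings, with every circle decorated.
\item Unwinding $m_1$, $m_2$ and $\overrightarrow{\delta}_{T_2}$ via the dictionary $e(\alpha)$, each term of $\partial^\boxtimes$ is surgery on an active resolution bridge of either half. Decorations change by the merge/split rules and 1-1 bifurcations contribute zero.
\item The algebra only books how the left-bridge actions are realized. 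Each $\overleftarrow{e}_\eta$ or $\overleftarrow{e}_C$ from $\overrightarrow{\delta}_{T_2}$ acts through $m_2$ as a sum over active arcs of $T_1$ realizing it. Each right generator $\overrightarrow{e}_\gamma$ or $\overrightarrow{e}_C$ acts as the corresponding surgery or decoration change.
\end{enumerate}
This is essentially the computation that will later identify the box tensor product with $\CKh$. I would carry it out here only to the extent that the differential is seen to depend solely on the link diagram in $\Sigma$, not on where $\partial D_p$ sits.

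With this description, finger and mirror moves across $\partial D_p$ are handled exactly as in the type D proof. One identifies states before and after the move by pushing arcs along the isotopy; circles, active arcs and decorations correspond, so the complexes are isomorphic. A finger move changes $n$ to $n+1$, but that is harmless in the explicit description, since no algebra appears there.

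\textbf{The main obstacle: handleslides across $\partial D_p$.} I would attack this in three steps.
\begin{enumerate}
\item Use the trick of Figure~\ref{fig:handleslide-plus-minus} to reduce to a handleslide over $\partial D_p^+$, that is, one lying in the right half.
\item Rerun the node argument from the handleslide invariance proof. Represent the relevant side by a diagram with nodes in which $\partial D_p$ becomes the circle $\CC$. Push the distinguished strand across $\CC$ using finger and mirror moves, which are already handled. Then shrink it back by planar isotopies, Reidemeister moves, and Reidemeister III moves with nodes.
\item Check that the prime Reidemeister moves give the same partial cubes of resolutions as ordinary ones. If so, the cancellation arguments apply verbatim inside the explicit complex of the whole link.
\end{enumerate}
The delicate point is step 3. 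I must verify that the arrows cancelled are still isomorphisms, namely merges or splits of free circles. I must also verify that the perturbation terms agree even though some circles now cross $\partial D_p$ and are cleaved, so that the relevant terms come from $\overleftarrow{e}_C$ and $\overrightarrow{e}_C$ actions. I expect this bookkeeping, rather than any new idea, to be where the real work lies.
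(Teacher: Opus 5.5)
Your proposal is correct and follows essentially the paper's argument. Moves supported in one half are handled by the type A and type D invariance theorems. Moves across $\partial D_p$ are handled by identifying $(\llangle T_1\rrbracket\boxtimes\llbracket T_2\rrangle,\partial^\boxtimes)$ with the complex $(\CKh(L),d)$, which depends only on the diagram in $\Sigma$, and then rerunning the finger, mirror and handleslide invariance arguments on that complex.

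Two small remarks. First, once you are in that explicit description there are no cleaved circles or $\overleftarrow{e}_C$, $\overrightarrow{e}_C$ actions left to track, so the bookkeeping you flag in your last step disappears. Second, the paper also gives a shortcut that avoids redoing the handleslide argument: re-cut $M$ along a different sphere $\partial D_{p'}$ with $p'\ne p$. Both cuts give complexes isomorphic to $\CKh(L)$, so the move becomes an interior move covered by the already-proved type A/D invariance.
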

\begin{proof}
We have proven that Reidemeister moves in $\overleftarrow{\U}$ or $\overrightarrow{\U}$ do not change the homotopy equivalence classes of $\llangle T_1\rrbracket$ or $\llbracket T_2\rrangle$, respectively, thus they do not change the chain homotopy type of their box tensor product. Additionally, finger, mirror, and handleslide moves across $\partial D_i$ for $i<p$ preserve the homotopy type of $\llangle T_1\rrbracket$, and finger, mirror, and handleslide moves across $\partial D_i$ for $i>p$ preserve the homotopy type of $\llbracket T_2\rrangle$, so the homotopy type of $\llangle T_1\rrbracket\boxtimes\llbracket T_2\rrangle$ is also invariant under these moves.

We defer the proof that the chain homotopy type of $\llangle T_1\rrbracket\boxtimes\llbracket T_2\rrangle$ is invariant under finger, mirror, and handleslide moves across $\partial D_p$ to the next section.
\end{proof}

\section{Direct Definition and Isomorphism}

Let $L$ be a link diagram of a link $\LL$ in $M$. A resolution $r$ of $L$ is a map $r:\CR(L)\to\{0,1\}$. The resolution diagram, $r(L)$, is the crossingless link in $\Sigma$ obtained by locally replacing (disjoint) neighborhoods of each crossing $c\in\CR(L)$ using the usual rule for $0$ and $1$-resolutions. Let $\CIR(r)$ be the set of circles in the resolution diagram $r(L)$. Let $\ACT(r)$ be the set of active arcs of $r$, that is, resolution bridges for the crossings in $\CR(L)$ with the $0$-resolution in $r$. A state $(r,s)$ of $L$ is a resolution $r$ of $L$ together with a choice of $+$ or $-$ decorations $s$ for each circle in $\CIR(r)$. 

Let $\CKh(L)$ be the $\F$-vector space generated by all states of $L$.

We define a bigrading on $\CKh(L)$ in the usual way: for a state $(r,s)$, let
\begin{enumerate}
    \item $h(r)=\sum_{c\in\CR(L)}r(c)$,
    \item $q(r,s)=\sum_{C\in\CIR(r)}s(C)$,
\end{enumerate}
and we let the bigrading on $(r,s)$ be
$$(h(r)-n_-(L),h(r)+q(r,s)+n_+(L)-2n_-(L)).$$

We define a linear map $d$ on $\CKh(L)$ by
$$d(r,s)=\sum_{\gamma\in\ACT(r)}D_\gamma(r,s),$$
for all states $(r,s)$, where $D_\gamma$ is a linear map defined by the following: for circles outside the support of $\gamma$, $D_\gamma$ does not change them, and
\begin{enumerate}
    \item If surgery on $\gamma$ merges the circles $C_1$ and $C_2$ to get a circle $C$ with $s(C_1)=+$ or $s(C_2)=+$, then $D_\gamma(r,s)=(r_\gamma,s_\gamma)$, where $s_\gamma(C)=+$ if $s(C_1)=s(C_2)=+$ and $s_\gamma(C)=-$ if one of $C_1$ or $C_2$ has $+$ decoration and the other has $-$.
    \item If surgery on $\gamma$ splits the circle $C$ into two circles $C_1$ and $C_2$, if $s(C)=+$ then $D_\gamma(r,s)=(r_\gamma,s_1)+(r_\gamma,s_2)$ where $s_i(C_i)=+$, $s_i(C_{i+1})=-$, and $s_i(C')=s(C')$ for $i=1,2$, $C'\ne C_1,C_2,C$. If $s(C)=-$, then $D_\gamma(r,s)=(r_\gamma,s_\gamma)$, where $s_\gamma(C_1)=s_\gamma(C_2)=-$.
    \item In all other cases, $D_\gamma(r,s)=0$.
\end{enumerate}

\begin{theorem}\label{thm:pairing-direct-iso}
Let $\LL$ be a link in $M$ with link diagram $L$ such that $L=T_1\natural T_2$ for tangle diagrams $T_1,T_2$ of tangles $\TT_1$ in $M_1\#\dots\#M_p\setminus \mathring D^3$ and $\TT_2$ in $M_{p+1}\#\dots\#M_r\setminus\mathring D^3$. Then $\CKh(T_1\natural T_2)$ is isomorphic to $\llangle T_1\rrbracket\boxtimes\llbracket T_2\rrangle$ as bigraded vector spaces. Furthermore, under this isomorphism, the Khovanov differential $d$ on $\CKh(T_1\natural T_2)$ is identified with $\partial^\boxtimes$ on $\llangle T_1\rrbracket\boxtimes\llbracket T_2\rrangle$.
\end{theorem}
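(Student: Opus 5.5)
The plan is to build the isomorphism on generators, check it respects the bigrading, and then match the terms of $\partial^\boxtimes$ with those of $d$ according to where the active arc lies.

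\textbf{The map on generators.} The module $\llangle T_1\rrbracket\otimes_{\II_n}\llbracket T_2\rrangle$ is spanned by pairs $(r_1,s_1)\otimes(r_2,s_2)$ with $\partial(r_1,s_1)=\partial(r_2,s_2)=(L,\sigma)$. Such a pair should correspond to a state $(r,s)$ of $T_1\natural T_2$ defined as follows.

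The resolution is $r=\rho_1\cup\rho_2$. The left matching of $r_2$ is $\overleftarrow{L}$, and its right matching is $r_1$ restricted to $\overrightarrow{\U}$. Every matching arising from a resolution of $T_1$ is then isotopic, rel $P_n$, to the corresponding constituent of $L$. This holds because the cleaved circles of $r_1$ are exactly $\overleftarrow{L}$ glued to the right matching.

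The circles of $r(L)$ are of three kinds: the free circles of $r_1$, the free circles of $r_2$, and the cleaved circles of $L$. The decoration $s$ is $s_1$ on the first kind, $s_2$ on the second, and $\sigma$ on the third.

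Conversely, a state of $L$ determines the pair by taking the matchings $L\cap\overleftarrow{\U}$ and $L\cap\overrightarrow{\U}$. Here $L$ is the union of the circles of $r(L)$ that meet $\partial D_p$. These constructions are mutually inverse, so the map is a bijection of bases.

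\textbf{Gradings.} For the homological grading, $h(r)=h(r_1)+h(r_2)$, and $n_\pm$ is additive. For the quantum grading, $q(r,s)$ is the sum of the free-circle contributions $q(r_1,s_1)$ and $q(r_2,s_2)$ plus the cleaved contribution $\iota(L,\sigma)$. This matches $\tfrac12\iota+\tfrac12\iota$ from the two factors, so the bigradings agree.

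\textbf{The differential.} Split $d=\sum_{\gamma\in\ACT(r)}D_\gamma$ according to whether the active arc $\gamma$ lies in $T_1$ or $T_2$, and classify it via $\INT$, $\DEC$, and $R$ relative to the appropriate side.

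For $\gamma$ in $T_1$ with $\gamma\in\INT$, the terms $D_\gamma$ are exactly $m_1=d_{\APS}$ applied to the first factor. For $\gamma$ in $T_2$ with $e(\gamma)=I$, the term $e(\gamma)\otimes(r_\gamma,s_\gamma)$ of $\overrightarrow\delta_{T_2}$ pairs through $m_2(x\otimes I)=x$ to give $D_\gamma$.

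For $\gamma$ in $T_2$ with $e(\gamma)=\overrightarrow e_C$ or $e_\gamma$ a right bridge, the action $m_2$ sets the new cleaved decoration while keeping the free circles of $T_1$ fixed, or it flips $C$. The result is the Khovanov surgery on $\gamma$ with the decoration rules from Table 2.1. Here one checks, case by case, that the DEC rules reproduce the merge and split rules of $d$ when a free circle is involved.

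The terms $\overleftarrow e_C\otimes(r_2,s_{2,C})$ and $e_\eta\otimes\cdots$ for left bridges $\eta$ in $\overrightarrow\delta_{T_2}$ pair with $m_2$ cases (3) and (5). These give sums over $\gamma\in\overleftarrow{\DEC}((r_1,s_1),C)$ and $\gamma\in\overleftarrow{\ACT}(r_1,\eta)$. Together they recover exactly the $D_\gamma$ for active arcs $\gamma$ of $T_1$ that change the cleaved link or its decoration.

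The remaining check is that no spurious terms survive. The sum over all left bridges $\eta$, with the decorations allowed by the edges of $\Gamma_n$, must pick out each arc $\gamma$ exactly once with the right decoration. Left bridges not realized by any active arc contribute zero. One must also confirm that 1-1 bifurcations contribute zero on both sides, since $\Gamma_n$ has no such edges and $D_\gamma$ vanishes on them.

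\textbf{Main obstacle.} The delicate step is the bookkeeping of decorations for arcs in $R(r)$. Each Khovanov term $D_\gamma$ must appear with coefficient exactly one. That is, the sum over $(\eta,\sigma,\sigma')$ in $\overrightarrow\delta_{T_2}$ and over $\gamma$ with $\cl(\gamma)=\eta$ in $m_2$ must be a bijection with pairs (active arc, output decoration). I would verify this by fixing $\gamma$ and the target state and checking that exactly one edge $e_{(\cl(\gamma),\sigma,\sigma')}$ produces it, using the definition of bridge edges, which mirrors $D_\gamma$ exactly.
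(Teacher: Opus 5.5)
Your proposal is correct and follows essentially the same route as the paper, which simply defers to Roberts' proof of \cite[Proposition 7.3]{roberts-type-a}. That proof glues states along $P_n$ to get a bijection of generators, adds the gradings (the two $\tfrac12\iota$ terms combine into the cleaved-circle contribution), and matches each $D_\gamma$ term according to whether the active arc lies in $T_1$ or $T_2$ and whether it is of type $\INT$, $\DEC$, or $R$, with 1-1 bifurcations vanishing on both sides. One small expository slip: the right matching $\overrightarrow{L}$ belongs to $r_1$ and is the arc part of $\rho_2(T_2)$, while $r_2$ carries only the left matching $\overleftarrow{L}$; your sentence about ``its right matching'' has these swapped.
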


As a consequence, $d$ is a $(1,0)$-differential on $\CKh(T_1\natural T_2)$.

\begin{proof}
The proof is the same as the proof of \cite[Proposition 7.3]{roberts-type-a}.
\end{proof}

As a consequence, it does not matter which connect sum sphere to divide $M$ along.

\begin{corollary}
Let $0\le p,p'\le r$, $q=r-p$, $q'=r-p'$, and let $\LL$ be a link in $M$ with link diagram $L$ such that $L=T_1^p\natural T_2^q=T_1^{p'}\natural T_2^{q'}$ for tangle diagrams $T_1^p,T_1^{p'},T_2^q,T_2^{q'}$ in $M_1\#\dots\#M_p\setminus \mathring D^3$, $M_1\#\dots\#M_{p'}\setminus \mathring D^3$, $M_{p+1}\#\dots\#M_r\setminus \mathring D^3$, and $M_{p'+1}\#\dots\#M_r\setminus \mathring D^3$, respectively. Then $(\llangle T_1^p\rrbracket\boxtimes\llbracket T_2^q\rrangle,\partial^\boxtimes)$ is isomorphic to $(\llangle T_1^{p'}\rrbracket\boxtimes\llbracket T_2^{q'}\rrangle,\partial^\boxtimes)$ as bigraded chain complexes.
\end{corollary}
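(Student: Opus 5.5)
The plan is to route both box tensor products through the direct Khovanov complex, using Theorem~\ref{thm:pairing-direct-iso} twice. Fix the link diagram $L$ in $\Sigma$. Applying Theorem~\ref{thm:pairing-direct-iso} with the cut at $\partial D_p$ gives an isomorphism of bigraded vector spaces
\[
\Phi_p:\CKh(L)\to\llangle T_1^p\rrbracket\boxtimes\llbracket T_2^q\rrangle
\]
which carries $d$ to $\partial^\boxtimes$. Doing the same at $\partial D_{p'}$ gives an isomorphism $\Phi_{p'}$. The desired isomorphism of bigraded chain complexes is then $\Phi_{p'}\circ\Phi_p^{-1}$.

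The key points to check are as follows.

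First, $\CKh(L)$ and its differential $d$ are defined purely from the diagram $L$ in $\Sigma$. Their definition makes no reference to $p$: states are resolutions of all crossings of $L$ with decorations on all circles, and $d$ sums over all active arcs. So the middle object really is the same chain complex in both applications.

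Second, each $\Phi_p$ is a chain isomorphism. The underlying map is the explicit identification sending a generator $(r_1,s_1)\otimes(r_2,s_2)$ to the glued state. Here the two boundaries agree, the left matching of $r_2$ is $\overleftarrow{\cl(r_1)}$, and likewise on the other side. The theorem asserts this map intertwines $d$ with $\partial^\boxtimes$, so the composite intertwines the two box tensor differentials.

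Third, bigradings are preserved. The homological grading is immediate. For the quantum grading, the $\frac12\iota$ contributions from the cleaved circles on both sides combine with the free-circle counts to give the full $q(r,s)$, and $n_\pm$ are additive over the two halves. This is part of the content of Theorem~\ref{thm:pairing-direct-iso}, so it transfers to the composite.

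The only step needing care is confirming that Theorem~\ref{thm:pairing-direct-iso} applies for every $0\le p\le r$ with the same diagram $L$. That is, $L$ must be generic with respect to all the $\partial D_i$ simultaneously, and its crossings must lie away from $\partial D_p$ and $\partial D_{p'}$. This is guaranteed by the standing genericity assumptions of Section 2, since crossings avoid every $\partial D_i$.

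The edge cases $p=0$ or $p=r$ are where one side is a tangle in a ball. I expect they pose no extra difficulty beyond noting that the algebra $\BB\Gamma_n(M,p)$ and the identification are still defined there. If $n=0$ at some cut, the box tensor product degenerates to an ordinary tensor product over the trivial idempotent, and the statement is again immediate.
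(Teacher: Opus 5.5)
Your proposal is correct and matches the paper's proof. Theorem~\ref{thm:pairing-direct-iso} identifies both box tensor products with the same $p$-independent complex $(\CKh(L),d)$, and composing one isomorphism with the inverse of the other gives the claimed isomorphism of bigraded chain complexes. The other checks you list, on gradings and on genericity of $L$ with respect to both cut spheres, are already part of that theorem's content or of the corollary's hypotheses, so nothing beyond the two applications is needed.
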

\begin{proof}
By Theorem~\ref{thm:pairing-direct-iso}, $(\llangle T_1^p\rrbracket\boxtimes\llbracket T_2^q\rrangle,\partial^\boxtimes)$ and $(\llangle T_1^{p'}\rrbracket\boxtimes\llbracket T_2^{q'}\rrangle,\partial^\boxtimes)$ are both isomorphic to $(\CKh(L),d)$.
\end{proof}

\begin{proposition}
The bigraded chain homotopy type of the bigraded chain complex $(\CKh(L),d)$ is invariant under finger moves, mirror moves, and handleslides through $\partial D_i$ for any $i=1,\dots,r-1$. 
\end{proposition}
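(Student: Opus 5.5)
The plan is to argue directly on the cube of resolutions of the full link diagram, reusing the arguments given for the type D structure. The reason this works is that $\CKh(L)$ is defined globally on $\Sigma$ without reference to any cutting sphere. It is built only from the set of circles of each resolution, the active arcs, and the merge/split rules. There are no maps for 1-1 bifurcations, and there is no dependence on whether circles are trivial or bounding.

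\textbf{Finger and mirror moves.} These are handled exactly as in the proof of Theorem~\ref{thm:type-d-invariance}. For a finger move across $\partial D_i$, each state of $L$ is identified with the state of $L'$ obtained by pushing circles along the isotopy. The circle containing the arc $a$ corresponds to the circle containing $b,c,d$, and all decorations are kept. For a mirror move, each resolution of $x$ is matched with the same resolution of $x'$, with the circles identified as in Figure~\ref{fig:mirror-states}. In both cases the correspondence respects $h$, $q$, $n_\pm$, and the active arcs. Surgery along corresponding active arcs merges or splits corresponding circles, so $D_\gamma$ is intertwined. The result is an isomorphism of bigraded chain complexes, which is stronger than a homotopy equivalence. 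Note that the isotopy classes of circles never enter into $d$, so the global definition only needs circle counts, which is simpler than the type D case.

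\textbf{Handleslides.} First reduce handleslides over $\partial D_i^+$ to handleslides over $\partial D_i^-$ using the finger and mirror moves of Figure~\ref{fig:handleslide-plus-minus}. Then mimic the proof of handleslide invariance for the type D structure. Represent $L$ by a diagram with nodes in the plane in which the relevant summands sit inside a disk $R_1$ bounded by a circle $\CC$. Push the distinguished strand across $\CC$ using finger and mirror moves, which give isomorphisms by the previous step. Then shrink it off through planar isotopies, Reidemeister III moves with nodes among other strands, and ordinary or primed Reidemeister I, II, III moves involving the distinguished strand. Planar isotopies and node-R3 moves induce isomorphisms, since the sets of crossings, circles and active arcs correspond bijectively. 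For the primed moves, the partial cube over the pictured crossings has the same shape as the usual Reidemeister cube. We then run the standard Gaussian elimination proof of Khovanov invariance (the same cancellations used in the Reidemeister proofs above: splitting off or merging in a small trivial free circle). After the move sequence the new crossings are all cancelled and no others are affected, so we recover a diagram isotopic to $L$.

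\textbf{Main obstacle.} The hard step is checking that the cancelled arrows really are isomorphisms, and that the perturbation terms produced by Gaussian elimination agree with the differential of the simplified diagram. The risk is that the strands involved are globally nontrivial in $\Sigma$, so a surgery which is locally a split or merge could in principle be a 1-1 bifurcation, where $d$ is zero. I would address this by observing that every cancelled arrow involves a small circle bounding a disk in the local picture: it is split off or merged in along an arc local to the crossings pictured. Such surgery is always a genuine merge or split of a trivial circle, and Lemmas~\ref{lem:bif-case-iii} and~\ref{lem:bif-case-iv} rule out hidden bifurcations among the remaining arrows. An alternative is to avoid this check entirely: for each such move on $\partial D_i$, choose $p\neq i$, apply Theorem~\ref{thm:pairing-direct-iso} and the preceding corollary to identify $(\CKh(L),d)$ with a box tensor product, and conclude from the invariance of the type A and type D structures under moves across $\partial D_i$ with $i\neq p$. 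This is available whenever $r\ge 2$, and it also settles the deferred case in Theorem~\ref{thm:pairing-invarinace}. I would present this second argument as the main proof and keep the direct argument as a check.
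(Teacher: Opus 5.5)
Your proposal is correct and follows the paper's own proof. The paper likewise says the direct argument mimics the type D finger, mirror and handleslide proofs, and then gives exactly your main argument as its alternative: cut along a sphere $\partial D_p$ with $p\neq i$, identify $(\CKh(L),d)$ with the box tensor product via Theorem~\ref{thm:pairing-direct-iso}, and invoke invariance of the type A structure (if $p>i$) or the type D structure (if $p<i$) under moves across $\partial D_i$. One point is worth making explicit: for $r=2$, the requirement $p\neq i$ forces the degenerate cut $p\in\{0,r\}$, where one side is a ball, and the paper's range $0\le p\le r$ permits this.
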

\begin{proof}
The proof follows in much the same way as the proof of invariance of the type D structure $(\llbracket T\rrangle,\overrightarrow{\delta}_T)$ under finger, mirror, and handleslide moves. 

Alternatively, choose one of the connect sum spheres other than the $i$-th one to cut $L$ into $L=T_1^p\natural T_2^q$ as above. The invariance under finger, mirror, and handleslide moves through $\partial D_i$ of $(\CKh(L),d)$ is implied by the invariance of $(\llangle T_1^p\rrbracket,\{m_j\}_{j=1}^\infty)$ for $p>i$ or by the invariance of $(\llbracket T_2^q\rrangle,\overrightarrow{\delta}_{T_2^q})$ for $q<i$ under the same moves and Theorem~\ref{thm:pairing-direct-iso}.
\end{proof}

The above proposition together with Theorem~\ref{thm:pairing-direct-iso} proves that the chain homotopy type of $\llangle T_1\rrbracket\boxtimes\llbracket T_2\rrangle$ is also invariant under finger moves, mirror moves, and handleslides, thus completing the proof of Theorem~\ref{thm:pairing-invarinace}. 

It is also possible to prove invariance of the box tensor product under these moves without using the isomorphism with $(\CKh(L),d)$, but this proof is much lengthier.

\begin{theorem}
The bigraded chain homotopy type of $(CKh(L),d)$ is an invariant of the link $\LL$.
\end{theorem}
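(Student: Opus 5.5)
By Lemma 2.1 (the lemma characterizing isotopic link diagrams), two diagrams of $\LL$ in $\Sigma$ are related by a finite sequence of Reidemeister I, II, III moves, finger moves, mirror moves, and handleslides across the various $\partial D_i$. It therefore suffices to show that each single move changes $(\CKh(L),d)$ only up to bigraded chain homotopy equivalence. The strategy is to reduce every move to results already established, either for the type D and type A structures or through the pairing isomorphism Theorem~\ref{thm:pairing-direct-iso}.

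\textbf{Finger, mirror, and handleslide moves.} These are exactly the content of the preceding proposition, so nothing further is needed.

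\textbf{Reidemeister moves.} Such a move is supported in a disk $D^2\subset\Sigma$ disjoint from all $\partial D_i^\pm$. Fix any $p$, and isotope the chosen separating circle $\partial D_p$ (equivalently, apply finger moves, already handled) so that this disk lies in the interior of one half, say $\overrightarrow{\U}$. If $r=1$, we may instead take $p=0$ and view the whole diagram as a right tangle with $n=0$ together with the empty left tangle; the $n\ge1$ setup then needs the trivial adjustment of cutting along a small sphere meeting $\LL$ in two points.

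Write $L=T_1\natural T_2$ and $L'=T_1\natural T_2'$, where $T_2'$ is obtained from $T_2$ by the Reidemeister move. Theorem~\ref{thm:type-d-invariance} gives a homotopy equivalence of type D structures $\llbracket T_2\rrangle\simeq\llbracket T_2'\rrangle$. Since the box tensor product with a fixed type A structure $\llangle T_1\rrbracket$ carries homotopy equivalences of type D structures to chain homotopy equivalences, this is the standard functoriality of $\boxtimes$. To apply it here, we need the $A_\infty$-module $\llangle T_1\rrbracket$ to be strictly unital and the sums defining $\boxtimes$ to be finite. Both hold: $\BB\Gamma_n$ is finite dimensional, and $m_i=0$ for $i\ge3$.

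Combining this with Theorem~\ref{thm:pairing-direct-iso} on both sides gives
$$(\CKh(L),d)\cong\llangle T_1\rrbracket\boxtimes\llbracket T_2\rrangle\simeq\llangle T_1\rrbracket\boxtimes\llbracket T_2'\rrangle\cong(\CKh(L'),d).$$
The bigradings are respected because every map in the chain is bigraded; in particular the writhe terms $n_\pm$ are accounted for by the shifts already built into $\llbracket\cdot\rrangle$. If the disk lies in $\overleftarrow{\U}$, the argument is symmetric, using the type A invariance.

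\textbf{Main obstacle.} The delicate point is the precise statement that $\boxtimes$ preserves homotopy equivalences in this bigraded differential setting over $\Z/2$. This requires checking that the homotopy equivalences produced by cancellation in the Reidemeister proofs are genuine type D morphisms, possibly with algebra coefficients, and that the cancellation lemma (\cite[Proposition 40]{roberts-type-d}) yields a type D homotopy equivalence rather than merely an isomorphism on underlying modules. I would first try to quote Roberts' functoriality result directly, since our structures satisfy the same formal hypotheses. If that fails, I would avoid the issue altogether by redoing the Reidemeister cancellations directly on $\CKh(L)$. This is possible because the cancelled arrows are Khovanov differential components that merge or split off a trivial free circle, and these behave exactly as in $S^3$.
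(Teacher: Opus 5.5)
Your proposal is correct and is essentially the paper's argument. The paper derives the statement directly from Theorem~\ref{thm:pairing-invarinace}, whose proof is exactly your combination of type D/type A invariance under Reidemeister moves, functoriality of $\boxtimes$ (which the paper also takes as standard), and the proposition on finger, mirror, and handleslide moves, together with the isomorphism of Theorem~\ref{thm:pairing-direct-iso}; your step of isotoping $\partial D_p$ is unnecessary, since a Reidemeister disk disjoint from all $\partial D_i^\pm$ already lies in one of the two halves.
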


\begin{proof}
This follows immediately from Theorem~\ref{thm:pairing-invarinace} and Theorem~\ref{thm:pairing-direct-iso}.
\end{proof}

\bibliographystyle{alpha} %\bibliographystyle{alphahack}
\bibliography{biblio}

@article{roberts-type-d,
title = {A type D structure in Khovanov homology},
journal = {Advances in Mathematics},
volume = {293},
pages = {81-145},
year = {2016},
issn = {0001-8708},
doi = {https://doi.org/10.1016/j.aim.2016.02.007},
url = {https://www.sciencedirect.com/science/article/pii/S0001870816000621},
author = {Lawrence P. Roberts}
}

@article{roberts-type-a,
author = {Lawrence Roberts},
title = {{A type $A$ structure in Khovanov homology}},
volume = {16},
journal = {Algebraic \& Geometric Topology},
number = {6},
publisher = {Mathematical Sciences Publishers},
pages = {3653 -- 3719},
year = {2016},
doi = {10.2140/agt.2016.16.3653},
URL = {https://doi.org/10.2140/agt.2016.16.3653}
}

@article{aps-i-bundles,
   title={Categorification of the Kauffman bracket skein module of I–bundles over surfaces},
   volume={4},
   ISSN={1472-2747},
   url={http://dx.doi.org/10.2140/agt.2004.4.1177},
   DOI={10.2140/agt.2004.4.1177},
   number={2},
   journal={Algebraic \& Geometric Topology},
   publisher={Mathematical Sciences Publishers},
   author={Asaeda, Marta M and Przytycki, Jozef H and Sikora, Adam S},
   year={2004},
   month=dec, pages={1177–1210} 
}

@article{khovanov-categorification,
author = {Mikhail Khovanov},
title = {{A categorification of the Jones polynomial}},
volume = {101},
journal = {Duke Mathematical Journal},
number = {3},
publisher = {Duke University Press},
pages = {359 -- 426},
year = {2000},
doi = {10.1215/S0012-7094-00-10131-7},
URL = {https://doi.org/10.1215/S0012-7094-00-10131-7}
}

@article{gabrovsek-rp3, title={The categorification of the Kauffman bracket skein module of $ \mathbb{R} {\mathrm{P} }^{3} $}, volume={88}, DOI={10.1017/S0004972713000105}, number={3}, journal={Bulletin of the Australian Mathematical Society}, author={Gabrovšek, Boštjan}, year={2013}, pages={407–422}}

@article{willis-kh,
author = {Michael Willis},
title = {{Khovanov Homology for Links in ${\mathrm{\# }^{r}}({S^{2}}\times {S^{1}})$}},
volume = {70},
journal = {Michigan Mathematical Journal},
number = {4},
publisher = {University of Michigan, Department of Mathematics},
pages = {675 -- 748},
year = {2021},
doi = {10.1307/mmj/1594281620},
URL = {https://doi.org/10.1307/mmj/1594281620}
}

@misc{rozansky-categorification,
      title={A categorification of the stable SU(2) Witten-Reshetikhin-Turaev invariant of links in S2 x S1}, 
      author={Lev Rozansky},
      year={2010},
      eprint={1011.1958},
      archivePrefix={arXiv},
      primaryClass={math.GT},
      url={https://arxiv.org/abs/1011.1958}, 
}

@misc{chen-rp3,
      title={Khovanov-type homologies of null homologous links in $\mathbb{RP}^3$}, 
      author={Daren Chen},
      year={2021},
      eprint={2104.04779},
      archivePrefix={arXiv},
      primaryClass={math.GT},
      url={https://arxiv.org/abs/2104.04779}, 
}

\end{document}